\newcommand{\sym}{\mathrm{sym}}
\newcommand{\ur}{\mathrm{ur}}
\newcommand{\ram}{\mathrm{ram}}
\newcommand{\red}{\mathrm{red}}
\newcommand{\Kal}{\mathrm{Kal}}
\newcommand{\Tam}{\mathrm{Tam}}
\newcommand{\Yu}{\mathrm{Yu}}
\newcommand{\KY}{\mathrm{KY}}
\newcommand{\HT}{\mathrm{HT}}
\newcommand{\BH}{\mathrm{BH}}
\newcommand{\rec}{\mathrm{rec}}
\newcommand{\ol}{\overline}
\newcommand{\Jac}[2]{\begin{pmatrix}\frac{#1}{#2}\end{pmatrix}}
\newcommand{\mcO}{\mathcal{O}}
\newcommand{\mfp}{\mathfrak{p}}
\newcommand{\mcT}{\mathcal{T}}
\newcommand{\bfT}{\mathbf{T}}
\newcommand{\Z}{\mathbb{Z}}
\newcommand{\F}{\mathbb{F}}
\newcommand{\R}{\mathbb{R}}
\newcommand{\C}{\mathbb{C}}
\newcommand{\Gm}{\mathbb{G}_{\mathrm{m}}}
\newcommand{\G}{\mathbf{G}}
\newcommand{\J}{\mathbf{J}}
\newcommand{\bfS}{\mathbf{S}}
\newcommand{\bfB}{\mathbf{B}}
\newcommand{\B}{\mathcal{B}}
\newcommand{\bfH}{\mathbf{H}}
\newcommand{\mfs}{\mathfrak{s}}
\newcommand{\mfg}{\mathfrak{g}}
\newcommand{\mfn}{\mathfrak{n}}
\newcommand{\mfA}{\mathfrak{A}}
\newcommand{\mfP}{\mathfrak{P}}
\newcommand{\mfU}{\mathfrak{U}}
\newcommand{\mfV}{\mathfrak{V}}
\newcommand{\mfW}{\mathfrak{W}}
\newcommand{\bfi}{\mathbf{i}}
\newcommand{\spl}{\mathbf{spl}}
\DeclareMathOperator{\tr}{tr}
\DeclareMathOperator{\Nr}{Nr}
\DeclareMathOperator{\Tr}{Tr}
\DeclareMathOperator{\depth}{depth}
\DeclareMathOperator{\sgn}{sgn}
\DeclareMathOperator{\ord}{ord}
\DeclareMathOperator{\val}{val}
\DeclareMathOperator{\GL}{GL}
\DeclareMathOperator{\SL}{SL}
\DeclareMathOperator{\Sp}{Sp}
\DeclareMathOperator{\Lie}{Lie}
\DeclareMathOperator{\cInd}{c-Ind}
\DeclareMathOperator{\Ind}{Ind}
\DeclareMathOperator{\Hom}{Hom}
\DeclareMathOperator{\End}{End}
\DeclareMathOperator{\Aut}{Aut}
\DeclareMathOperator{\Ker}{Ker}
\DeclareMathOperator{\Res}{Res}
\DeclareMathOperator{\Int}{Int}
\DeclareMathOperator{\Stab}{Stab}
\DeclareMathOperator{\id}{id}
\DeclareMathOperator{\LLC}{LLC}
\theoremstyle{plain}
\newtheorem{thm}{Theorem}[section]
\newtheorem*{thm*}{Theorem}
\newtheorem{prop}[thm]{Proposition}
\newtheorem{lem}[thm]{Lemma}
\newtheorem{cor}[thm]{Corollary}
\theoremstyle{definition}
\newtheorem{defn}[thm]{Definition}
\theoremstyle{remark}
\newtheorem{rem}[thm]{Remark}
\newtheorem*{claim*}{Claim}
\title{Local Langlands correspondence for regular supercuspidal representations of $\GL(n)$}
\author{Masao Oi}
\address{Department of Mathematics (Hakubi center), Kyoto University, Kitashirakawa, Oiwake-cho, Sakyo-ku, Kyoto 606-8502, Japan.}
\email{masaooi@math.kyoto-u.ac.jp}
\author{Kazuki Tokimoto}
\address{Institute of Mathematics, Academia Sinica, Astronomy-Mathematics Building, No.\ 1, Sec.\ 4, Roosevelt Road, Taipei 10617, Taiwan.}
\email{tokimoto@gate.sinica.edu.tw}
\begin{document}

\begin{abstract}
In this paper, we prove the coincidence of Kaletha's recent construction of the local Langlands correspondence for regular supercuspidal representations with Harris--Taylor's one in the case of general linear groups.
The keys are Bushnell--Henniart's essentially tame local Langlands correspondence and Tam's result on Bushnell--Henniart's rectifiers.
By combining them, our problem is reduced to an elementary root-theoretic computation on the difference between Kaletha's and Tam's $\chi$-data.
\end{abstract}

\subjclass[2010]{Primary: 22E50; Secondary: 11S37, 11F70}
\keywords{local Langlands correspondence, regular supercuspidal representation, essentially tame supercuspidal representation}

\maketitle

\section{Introduction}\label{sec:intro}

One fundamental objective in representation theory of $p$-adic reductive groups is to establish the conjectural \textit{local Langlands correspondence}, which predicts the existence of a natural connection between \textit{$L$-packets} (finite sets consisting of irreducible smooth representations) and \textit{$L$-parameters} for connected reductive groups over a $p$-adic field.
From the early days of representation theory of $p$-adic reductive groups, a number of results on the local Langlands correspondence have been obtained.
Among them, Harris--Taylor's construction (\cite{MR1876802} and also by Henniart in \cite{MR1738446}) of the correspondence for general linear groups, which are the most typical examples of connected reductive groups, has a particularly significant meaning.
Since it was established, it has been playing an important role as an indispensable foundation in a lot of studies.

Recently, several attempts are being made to construct the correspondence for more general connected reductive groups beyond Harris--Taylor's work on general linear groups.
One possible approach to constructing the correspondence for general groups is to restrict the class of representations.
For example, in their striking paper \cite{MR2480618}, DeBacker and Reeder constructed $L$-packets consisting of so-called regular depth zero supercuspidal representations and their corresponding $L$-parameters for unramified connected reductive groups.
At present, the most general result in this direction is Kaletha's construction for \textit{regular supercuspidal representations} (\cite{MR4013740}).
Regular supercuspidal representations are supercuspidal representations which are obtained by Yu's construction (\cite{MR1824988}) and satisfy a certain regularity condition.
As we can see in the fact that every supercuspidal representation of $\GL_{n}$ is regular when $p$ does not divide $n$, the regularity condition is not so restrictive compared to, for example, the depth zero condition mentioned above.
Although not all supercuspidal representations are regular in general, most supercuspidal representations do have this property.
By focusing on such representations, Kaletha constructed $L$-packets and their corresponding $L$-parameters, under the assumption that the connected reductive group is tamely ramified.
(Recently, a construction for further general supercuspidal representations was announced by him in \cite{Kaletha:2019}.)
Furthermore, he succeeded in proving various important properties such as the stability, the standard endoscopic character relation, and so on (under the assumption of the ``torality'').

However, it is not so obvious from his construction whether Kaletha's correspondence recovers Harris--Taylor's local Langlands correspondence in the case of general linear groups.
The goal of this paper is to check it, that is, to prove the following theorem:
\begin{thm}[Thereom \ref{thm:main}]\label{thm:intro}
Let $F$ be a $p$-adic field with odd residual characteristic.
Then the local Langlands correspondence of Kaletha coincides with that of Harris--Taylor for regular supercuspidal representations of $\GL_{n}(F)$.
\end{thm}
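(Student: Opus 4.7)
The plan is to use Bushnell--Henniart's essentially tame local Langlands correspondence as an intermediate. For $\GL_n(F)$ with odd $p$, Kaletha's regular supercuspidal representations are precisely the essentially tame supercuspidals, and Bushnell--Henniart proved that their essentially tame LLC agrees with Harris--Taylor's. It therefore suffices to compare Kaletha's LLC with Bushnell--Henniart's on this common class of representations.

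Both constructions begin with essentially the same datum, an admissible pair $(E/F, \xi)$ where $E/F$ is a tame degree-$n$ extension and $\xi$ is a sufficiently generic character of $E^\times$; both produce an $L$-parameter of the form $\Ind_{W_E}^{W_F}\tilde\xi$ for some character $\tilde\xi$ of $W_E \cong E^\times$; and in both cases $\tilde\xi$ is obtained from $\xi$ by twisting by a product of tamely ramified correction characters indexed by Galois orbits of roots of the induced torus $\Res_{E/F}\Gm$. Bushnell--Henniart's twist is their \emph{rectifier}, which Tam has explicitly expressed as a product of local characters attached to certain $\chi$-data. Kaletha's twist, on the other hand, comes from the $\chi$-data built into his own Langlands--Shelstad-style $L$-embedding ${}^L T \hookrightarrow {}^L \GL_n$. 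The whole theorem therefore reduces to verifying that Kaletha's and Tam's $\chi$-data define the same twisting character of $E^\times$.

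After invoking the product decomposition over root orbits, this verification becomes local: for each Galois orbit of roots of type symmetric unramified, symmetric ramified, or asymmetric, one would compare Kaletha's prescription with Tam's on the subfield of $E$ attached to that orbit. This is the root-theoretic computation announced in the abstract. I expect the calculation to be essentially routine in the asymmetric and symmetric unramified cases, where the $\chi$-data are either canonical or determined by a tame character of a smaller field.

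The main obstacle will be the symmetric ramified orbits. There the $\chi$-data are only rigidified up to a quadratic twist, and Kaletha and Tam make genuinely different-looking choices involving quadratic Gauss sums, finite-field Legendre--Jacobi symbols, and toral invariants. Matching them forces one to track Whittaker normalizations, the choice of a pinning, and the signs attached to symmetric roots with care. The assumption that $p$ is odd intervenes precisely at this point, both to keep every relevant subextension of $E/F$ tame and to make all square roots and quadratic characters behave predictably; lifting this hypothesis would require treating the residue characteristic $2$ case separately and is not pursued.
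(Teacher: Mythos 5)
Your overall architecture is right — pass through Bushnell--Henniart via Tam's reinterpretation of the rectifier in terms of $\chi$-data, then do a local comparison over Galois orbits of roots. But the reduction you propose, ``verify that Kaletha's and Tam's $\chi$-data define the same twisting character of $E^\times$,'' is wrong, and the error hides the actual content of the theorem. Kaletha's recipe does \emph{not} associate ${}^L\!j_{\chi_{\Kal}}\circ\phi_\xi$ to the representation $\pi^{\KY}_{(\bfS,\xi)}$; it associates it to $\pi^{\KY}_{(\bfS,\epsilon\xi)}$, where $\epsilon=\epsilon^{\sym}\cdot\epsilon_{\sym,\ur}\cdot\epsilon_{f,\ram}$ is the DeBacker--Spice/Kaletha sign character built from asymmetric, symmetric unramified, and toral-invariant contributions. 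Equivalently, $\LLC_{\G}^{\Kal}(\pi_{(\bfS,\xi)})=\Ind_{W_E}^{W_F}(\epsilon^{-1}\xi\,\mu_{\chi_{\Kal}})$, while Bushnell--Henniart--Tam gives $\Ind_{W_E}^{W_F}(\xi\,\mu_{\chi_{\Tam}}^{-1})$. So the identity that actually has to be proved is $\mu_{\chi_{\Tam}}^{-1}=\epsilon^{-1}\mu_{\chi_{\Kal}}$: the two sets of $\chi$-data are \emph{not} equal on $E^\times$; they differ by $\epsilon$, and accounting for $\epsilon$ is the nontrivial part. As written, your plan would succeed only if $\epsilon$ were trivial, which it is not in general.

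This also inverts your assessment of where the difficulty sits. Kaletha's $\chi$-data are trivial on asymmetric roots, so in the asymmetric case the equality being proved is precisely that the asymmetric part of Tam's $\chi$-data equals $\epsilon_\alpha$ — that is a genuine computation (a sign-of-permutation/Jacobi-symbol identity via the t-factors), not a routine cancellation, and similarly the symmetric unramified case carries the $\epsilon_\alpha$ term involving $k_{F_\alpha}^1$. In the symmetric ramified case, by contrast, the $\epsilon$ contribution vanishes: the toral invariants $f_{(\G,\bfS)}(\alpha)$ are identically trivial for $\GL_n$ (a root-vector calculation), so one must prove $\chi_{\Tam,\alpha}^{-1}=\chi_{\Kal,\alpha}$ on the nose, which is where the Langlands constant, Gauss sums, and the quadratic form on $\mfW_{[g]}$ enter. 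You should also budget for the ``folklore'' check that $\pi^{\KY}_{(\bfS,\xi)}\cong\pi^{\BH}_{(E,\xi)}$ as representations — comparing Yu's Heisenberg--Weil step with Bushnell--Henniart's ``only debatable step'' — since the two constructions have to be shown to produce the same object before any $L$-parameter comparison makes sense.
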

Here we remark that the oddness assumption on the residual characteristic is needed for making Kaletha's construction of the correspondence work.

We explain the outline of the proof of this result.
In the following, we let $F$ be a $p$-adic field with odd residual characteristic and consider the general linear group $\GL_{n}$ over $F$.
The key ingredients are Bushnell--Henniart's consecutive work on the \textit{essentially tame local Langlands correspondence} (\cite{MR2138141,MR2148193,MR2679700}) and Tam's reinterpretation of their work (\cite{MR3509939}).

In the case of $\GL_{n}$, regular supercuspidal representations are nothing but so-called essentially tame supercuspidal representations, which have been intensively studied by Bushnell--Henniart.
Such representations can be parametrized by pairs $(E,\xi)$ (called $F$-admissible pairs) consisting of a degree $n$ tamely ramified extension $E$ of $F$ and a character $\xi$ of $E^{\times}$ satisfying a condition called $F$-admissibility.
Then Bushnell--Henniart showed that the essentially tame supercuspidal representation $\pi_{(E,\xi)}^{\BH}$ corresponding to a pair $(E,\xi)$ maps to $\Ind_{W_{E}}^{W_{F}}(\xi\mu_{\rec}^{-1})$ (as an $n$-dimensional representation of the Weil group $W_{F}$ of $F$) under Harris--Taylor's local Langlands correspondence (let us write $\LLC_{\GL_{n}}^{\HT}$ for it):
\[
\LLC_{\GL_{n}}^{\HT}(\pi_{(E,\xi)}^{\BH})
=
\Ind_{W_{E}}^{W_{F}}(\xi\mu_{\rec}^{-1}),
\]
where $\mu_{\rec}$ is a certain tamely ramified character of $E^{\times}$ which depends on $(E,\xi)$ and is called the \textit{rectifier} of $(E,\xi)$, and gave an explicit formula for the rectifier.

On the other hand, Kaletha's construction of the local Langlands correspondence is briefly explained as follows.
Let $\G$ be a tamely ramified connected reductive group over $F$.
He first gave a reinterpretation to Yu's theory of construction of supercuspidal representations of such a group.
According to it, from a pair $(\bfS,\xi)$ called a ``tame elliptic regular pair'', which consists of a tamely-ramified elliptic maximal torus $\bfS$ of $\G$ and its character $\xi$ satisfying some regularity condition, we obtain an irreducible supercuspidal representation $\pi_{(\bfS,\xi)}^{\KY}$ of $\G(F)$.
When the group $\G$ is $\GL_{n}$ over $F$, a tamely ramified elliptic maximal torus corresponds to a tamely ramified extension of $F$ of degree $n$.
Then, in fact, a tame elliptic regular pair is nothing but an $F$-admissible pair.
More precisely, if an elliptic maximal torus $\bfS$ corresponds to an extension $E$ of $F$, then a pair $(\bfS,\xi)$ is tame elliptic regular if and only if $(E,\xi)$ is $F$-admissible.
Furthermore, Yu's construction recovers Bushnell--Henniart's construction of essentially tame supercuspidal representations, i.e., we have $\pi^{\BH}_{(E,\xi)}\cong\pi_{(\bfS,\xi)}^{\KY}$ (see Appendix \ref{sec:BH-Kal}).

To a tame elliptic regular pair $(\bfS,\xi)$, Kaletha associated an $L$-parameter $\phi$ in the following way.
First, by the local Langlands correspondence for $\bfS$, we get an $L$-parameter $\phi_{\xi}$ of $\bfS$.
As $\phi_{\xi}$ is a homomorphism from the Weil group $W_{F}$ to the $L$-group ${}^{L}\bfS$ of $\bfS$, we regard it as an $L$-parameter of $\G$ if we can find an $L$-embedding of ${}^{L}\bfS$ into ${}^{L}\G$.
In fact, a general procedure for creating such an embedding from certain auxiliary data called ``\textit{$\chi$-data}'' is introduced by Langlands--Shelstad.
The point here is that the choice of the set of $\chi$-data may not be unique, hence Langlands--Shelstad's construction does not give us an $L$-embedding in a canonical way.
Then Kaletha constructed a set of $\chi$-data (let us write $\chi_{\Kal}$ for it) carefully from the information of the pair $(E,\xi)$ and defined an $L$-parameter $\phi$ of $\G$ by using Langlands--Shelstad's $L$-embedding ${}^{L}\!j_{\chi_{\Kal}}$ determined by $\chi_{\Kal}$:
\[
\xymatrix{
&{}^{L}\G\\
W_{F}\ar^-{\phi}[ru]\ar^-{\phi_{\xi}}[r]&{}^{L}\bfS\ar@{^{(}->}_-{\text{${}^{L}\!j_{\chi_{\Kal}}$ the $L$-embedding obtained from $\chi_{\Kal}$}}[u]
}
\]

Therefore, in order to investigate the relation between Harris--Taylor's and Kaletha's constructions, it is necessary to understand the $L$-parameter obtained in Kaletha's way as an $n$-dimensional representation of $W_{F}$.
In fact, what Tam pursued in his paper \cite{MR3509939} is exactly this point.
In his paper \cite{MR3509939}, he first described the $L$-parameter constructed as above explicitly as an $n$-dimensional representation of $W_{F}$.
More precisely, for a pair $(E,\xi)$ and a set $\chi$ of $\chi$-data, he proved that the $L$-parameter of $\GL_{n}$ obtained as above is given by $\Ind_{W_{E}}^{W_{F}}\xi\mu_{\chi}$ with a character $\mu_{\chi}$ of $E^{\times}$ determined by $\chi$ (see \cite[Proposition 6.5]{MR3509939} or Proposition \ref{prop:Tam} in this paper for details).
Second, he indeed constructed a set of $\chi$-data (let us write $\chi_{\Tam}$ for it) realizing $\mu_{\rec}$ as $\mu_{\chi_{\Tam}}$ from a pair $(E,\xi)$.

Then, where is the nontrivial point left in our problem of comparing the correspondences of Kaletha and Harris--Taylor?
In fact, Kaletha's construction of the correspondence requires some additional twist on $\pi_{(\bfS,\xi)}^{\KY}$ by a certain character.
The key object at this point is the character ``$\epsilon$'' defined by DeBacker--Spice in \cite{MR3849622} (and also by Kaletha in \cite{MR4013740}).
By applying the theory of DeBacker and Spice to our setting, we may attach a tamely ramified character $\epsilon$ of $\bfS(F)$ to a tame elliptic regular pair $(\bfS,\xi)$.
This character is defined according to a root-theoretic property of the pair $(\bfS,\xi)$ and appears naturally in the context of an explicit character formula for the supercuspidal representation $\pi_{(\bfS,\xi)}^{\KY}$ (which was established by Adler--Spice in \cite{MR2543925} first and deepened later by DeBacker--Spice and Kaletha).
Then Kaletha associated the $L$-parameter ${}^{L}\!j_{\chi_{\Kal}}\circ\phi_{\xi}$ to the representation $\pi_{(\bfS,\xi\epsilon)}^{\KY}$, that is, the regular supercuspidal representation arising from the twisted pair $(\bfS,\xi\epsilon)$ (note that this is again tame elliptic regular):
\[
\LLC_{\G}^{\Kal}(\pi_{(\bfS,\epsilon\xi)}^{\KY})
=
{}^{L}\!j_{\chi_{\Kal}}\circ\phi_{\xi}.
\] 

Therefore, in fact, Kaletha's $\chi$-data is not exactly the same as Tam's.
Our task is to prove that the difference between them (i.e., the ratio of $\mu_{\chi_{\Kal}}$ to $\mu_{\chi_{\Tam}}^{-1}$) is given by DeBacker--Spice's character $\epsilon$.
To do this, we have to compare the language used in Adler--Spice--DeBacker and Kaletha's work with the one in Bushnell--Henniart and Tam's work.
In the former one, every notion is defined in a sophisticated way according to the general structure theory of connected reductive groups over $p$-adic fields.
On the other hand, in the latter one, since everything can be described explicitly when specialized to $\GL_{n}$, the corresponding notions are defined concretely in a Galois-theoretic way.
Thus, in order to compare $\chi_{\Kal}$ with $\chi_{\Tam}$, we first have to grasp the relation between these two languages precisely.
If we do this appropriately, our problem is reduced to a simple case-by-case computation based on a classification of root systems with Galois actions arising from elliptic maximal tori of $\GL_{n}$.
Then the result follows from elementary properties of several fundamental arithmetic invariants such as the Jacobi symbol, Gauss sums, Langlands constants, and so on.

Thus we would like to conclude this introduction by emphasizing that the theoretically difficult part of our problem can be completed almost just by referring to Tam's work.

\medbreak
\noindent{\bfseries Organization of this paper.}\quad
In Section \ref{sec:notation}, we explain our notation on fundamental objects and several invariants appearing in this paper.
In Section \ref{sec:HT}, we recall Bushnell--Henniart's work on Harris--Taylor's local Langlands correspondence for $\GL_{n}$ and its reinterpretation due to Tam.
In Section \ref{sec:Kaletha}, we review Kaletha's construction of the local Langlands correspondence and describe it in the case of $\GL_{n}$.
In Section \ref{sec:pre}, we introduce some preliminary results needed for our comparison of two $\chi$-data $\chi_{\Kal}$ and $\chi_{\Tam}$.
What we will do in this section is basically to compare Kaletha's language, which is abstract and available for general tamely ramified groups, with Bushnell--Henniart and Tam's one, which is explicit but specialized to $\GL_{n}$.
Although we believe most of the content in this section are well-known to experts, we justify them here for the sake of completeness.
In Section \ref{sec:main}, we determine the difference between $\chi_{\Kal}$ and $\chi_{\Tam}$ by a case-by-case computation and complete the proof of our main result.
In Appendix \ref{sec:BH-Kal}, we check that when we regard a tame elliptic regular pair of $\GL_{n}$ as an $F$-admissible pair, Kaletha--Yu's and Bushnell--Henniart's constructions of supercuspidal representations give rise to exactly the same representation.

\medbreak
\noindent{\bfseries Acknowledgment.}\quad
The authors would like to thank Tasho Kaletha for his encouragement and for answering their questions.
They are also grateful to Wen-Wei Li and Alexander Bertoloni Meli for their comments.
Moreover, they also express their sincere gratitude to Geo Kam-Fai Tam for a lot of detailed comments and advice on many technical points.
The authors were able to complete this work thanks to his help.

The first author was supported by the Program for Leading Graduate Schools, MEXT, Japan and JSPS Research Fellowship for Young Scientists, and KAKENHI Grant Number 17J05451 (DC2) and 19J00846 (PD).
The second author was supported by the Research Institute for Mathematical Sciences, a Joint Usage/Research Center located in Kyoto University, Iwanami Fujukai Foundation, and JSPS KAKENHI Grant Number 19K14503.

\setcounter{tocdepth}{2}
\tableofcontents

\section{Notation}\label{sec:notation}
First we explain our notation for several notions used in this paper and list their basic properties.

\begin{description}
\item[Prime number]
In this paper, we always assume that $p$ is an odd prime number.

\item[$p$-adic field]
We fix a $p$-adic field $F$.
Let $\mcO_{F}$, $\mfp_{F}$, and $k_{F}$ denote the ring of integers, the maximal ideal of $\mcO_{F}$, and the residue field $\mcO_{F}/\mfp_{F}$, respectively.
Let $q$ be the order of $k_{F}$.
For $x \in \mcO_{F}$, $\bar{x}$ denotes the image of $x$ in $k_{F}$.
We write $\mu_{F}$ for the set of roots of unity in $F$ of order prime to $p$.
We often regard an element of $k_{F}^{\times}$ as an element of $\mu_{F}$ by the Teichm{\"u}ller lift.
We write $\val_{F}$ for the valuation of $F$ normalized so that $\val_{F}(F^{\times})=\Z$.
By extending it to $\overline{F}$ so that $\val_{F}(F^{\times})=\Z$ again holds, we always regard $\val_{F}$ as a valuation of $\overline{F}$.

We fix an algebraic closure $\ol{F}$ of $F$ and write $\ol{k}_{F}$ for the residue field of $\ol{F}$.
We consider every finite extension of $F$ or $k_{F}$ within these closures.
Let $\Gamma_{F}$ and $W_{F}$ denote the absolute Galois group of $F$ and the Weil group of $F$, respectively.

For a finite extension $E$ of $F$, we use similar notations to above such as $\mcO_{E}$, $\mfp_{E}$, and so on.
We write $e(E/F)$ and $f(E/F)$ for the ramification index and the residue degree of the extension $E/F$, respectively.

\item[Norm $1$ subgroup for a finite field]
For a finite field $k$ which contains a subfield $k'$ such that $k$ is quadratic over $k'$ (note that then such a $k'$ is unique), we put $k^{1}$ to be the kernel of the norm map with respect to $k/k'$:
\[
k^{1}:=\Ker(\Nr_{k/k'}\colon k^{\times}\twoheadrightarrow {k'}^{\times}).
\]

\item[Filtration on additive and multiplicative groups]
For a finite extension $E$ of $F$, we put
\[
E_{r}:=\{x\in E \mid \val_{F}(x)\geq r\}
\quad
\text{and}
\quad
E_{r+}:=\{x\in E \mid \val_{F}(x)> r\}
\]
for each $r\in\R$.
Similarly, for each $r\in\R_{>0}$, we put
\begin{align*}
E_{r}^{\times}&:=\{x\in 1+\mfp_{E} \mid \val_{F}(x-1)\geq r\}
\quad
\text{and}\\
E_{r+}^{\times}&:=\{x\in 1+\mfp_{E} \mid \val_{F}(x-1)> r\}.
\end{align*}
On the other hand, for a positive integer $s\in\Z_{>0}$, we put
\[
U_{E}^{s}:=1+\mfp_{E}^{s}.
\]
Here we note that, if we put $e:=e(E/F)$ to be the ramification index of the extension $E/F$, then we have
\[
E_{\frac{s}{e}}^{\times}=U_{E}^{s}
\]
for each $s\in\Z_{>0}$.

\begin{rem}\label{rem:norm}
It is well-known that, for finite tamely ramified extensions $E\supset K\supset F$, the norm map $\Nr_{E/K}$ gives a surjection
\[
\Nr_{E/K}\colon U_{E}^{e(E/K)s}\twoheadrightarrow U_{K}^{s}
\]
for any integer $s\in\Z_{>0}$ (see, for example, \cite[Chapter V, Section 6, Corollary 3]{MR554237}).
If we use the above normalizations of filtrations of multiplicative groups, this can be expressed as: for any $r\in\R_{>0}$, we have
\[
\Nr_{E/K}\colon E_{r}^{\times}\twoheadrightarrow K_{r}^{\times}.
\]
\end{rem}

\item[Level and depth of a multiplicative character]
For a finite tamely ramified extension $E$ of $F$ and a multiplicative character $\xi$ of $E^{\times}$, we say
\begin{itemize}
\item
$\xi$ is of $E$-level $s$ if $\xi|_{U_{E}^{s+1}}$ is trivial and $\xi|_{U_{E}^{s}}$ is nontrivial, and
\item
$\xi$ is of depth $r$ if $\xi|_{E_{r+}^{\times}}$ is trivial and $\xi|_{E_{r}^{\times}}$ is nontrivial.
\end{itemize}
Note that if we put the depth of $\xi$ to be $r$, then its $E$-level is given by $e(E/F)r$.

\item[Additive character]
Throughout this paper, we fix an additive character $\psi_{F}$ on $F$ of level one, i.e., $\psi_{F}$ is trivial on $\mfp_{F}$, but not trivial on $\mcO_{F}$.
For a finite tamely ramified extension $E$ of $F$, let $\psi_{E}$ denote the additive character $\psi_{F}\circ\Tr_{E/F}$ on $E$.
Note that, by the tameness assumption, this is again of level one, hence induces a nontrivial additive character of the residue field $k_{E}$ of $E$.
By abuse of notation, we again write $\psi_{E}$ for this induced additive character of $k_{E}$.

\item[Quadratic character]
For a finite cyclic group $C$ of even order, we write 
\[
\begin{pmatrix}
\frac{\cdot}{C}
\end{pmatrix}
\]
for the unique nontrivial quadratic character of $C$.
Note that if we identify $C$ as a subgroup of $\C^{\times}$, then we have an equality of sign
\[
\begin{pmatrix}
\frac{c}{C}
\end{pmatrix}
=
c^{\frac{|C|}{2}}
\]
for any $c\in C$.

\item[Jacobi symbol]
Let
\[
\begin{pmatrix}
\frac{\cdot}{\cdot}
\end{pmatrix}
\]
denote the Jacobi symbol.
We note that, for any finite field $k$ with odd order $q_{k}$, we have $\begin{pmatrix}\frac{m}{k^{\times}}\end{pmatrix}=\begin{pmatrix}\frac{m}{q_{k}}\end{pmatrix}$ for any integer $m$ which is prime to $q_{k}$ (here the left-hand side is the quadratic character defined above and the right-hand side is the Jacobi symbol).

\item[Gauss sum]
For a nontrivial additive character $\psi$ of a finite field $k$ with odd order $q_{k}$, we define its Gauss sum $\mfg(\psi)$ by
\[
\mfg(\psi)
:=
\sum_{x\in k^{\times}}\begin{pmatrix}\frac{x}{k^{\times}}\end{pmatrix}\psi(x).
\]
We define the normalized Gauss sum $\mfn(\psi)$ by
\[
\mfn(\psi)
:=
q_{k}^{-\frac{1}{2}}\mfg(\psi).
\]
Recall that we have the following well-known identity:
\[
\mfn(\psi)^{2}
=
\begin{pmatrix}
\frac{-1}{k^{\times}}
\end{pmatrix}
=
\begin{pmatrix}
\frac{-1}{q_{k}}
\end{pmatrix}
\]
(see, e.g., \cite[(23.6.3)]{MR2234120}).

\item[Langlands constant]
For a finite extension $E/K$ of $p$-adic fields and a nontrivial additive character $\psi$ of $K$, we write $\lambda_{E/K}(\psi)$ for the Langlands constant, that is, the ratio of local root numbers:
\[
\lambda_{E/K}(\psi)
:=
\frac{\varepsilon(\Ind_{W_{E}}^{W_{K}}\mathbbm{1}_{E},\frac{1}{2},\psi)}{\varepsilon(\mathbbm{1}_{E},\frac{1}{2},\psi\circ\Tr_{E/K})}.
\]
When $K$ is a finite extension of the fixed $p$-adic field $F$ and $\psi$ is given by $\psi_{K}(=\psi_{F}\circ\Tr_{K/F})$, we simply write $\lambda_{E/K}$ for $\lambda_{E/K}(\psi_{K})$.

\item[General linear group]
In this paper, we always write $\G$ for the general linear group $\GL_{n}$ over $F$.
Note that the Langlands dual group $\widehat{\G}$ is then given by $\GL_{n}$ over the complex number field $\C$.
We fix an $F$-splitting $\spl_{\G}:=(\bfB,\bfT,\{X\})$ of $\G$ and a splitting $\spl_{\widehat{\G}}:=(\B,\mcT,\{\mathcal{X}\})$ of $\widehat{\G}$.
See, for example, \cite[Section 1.1]{MR1687096} for the precise definition of a splitting.
We also fix an isomorphism between the based root data of $\G$ determined by $\spl_{\G}$ and the dual of that of $\widehat{\G}$ determined by $\spl_{\widehat{\G}}$.

\end{description}

\section{Description of Harris--Taylor's LLC due to Bushnell--Henniart and Tam}\label{sec:HT}

In this section, we recall Bushnell--Henniart's work on Harris--Taylor's local Langlands correspondence for $\GL_{n}$ and its reinterpretation due to Tam.

\subsection{$\chi$-data and Langlands--Shelstad's construction of $L$-embeddings}\label{subsec:LS}

Let $\bfS$ be a maximal torus of $\G$ defined over $F$.
Then we have the set $\Phi(\bfS,\G)$ of (absolute) roots of $\bfS$ in $\G$, which has an action of $\Gamma_{F}$.
We recall the following terminology of Adler--DeBacker--Spice for roots in $\Phi(\bfS,\G)$:
\begin{defn}\label{defn:root}
For each $\alpha\in\Phi(\bfS,\G)$, we put $\Gamma_{\alpha}$ (resp.\ $\Gamma_{\pm\alpha}$) to be the stabilizer of $\alpha$ (resp.\ $\{\pm\alpha\}$) in $\Gamma_{F}$.
Let $F_{\alpha}$ (resp.\ $F_{\pm\alpha}$) be the subfield of $\ol{F}$ fixed by $\Gamma_{\alpha}$ (resp.\ $\Gamma_{\pm\alpha}$): 
\[
F\subset F_{\pm\alpha}\subset F_{\alpha}
\quad
\longleftrightarrow
\quad
\Gamma_{F}\supset \Gamma_{\pm\alpha}\supset \Gamma_{\alpha}.
\]
\begin{itemize}
\item
When $F_{\alpha}=F_{\pm\alpha}$, we say $\alpha$ is an \textit{asymmetric} root.
\item
When $F_{\alpha}\supsetneq F_{\pm\alpha}$, we say $\alpha$ is a \textit{symmetric} root.
Note that, in this case, the extension $F_{\alpha}/F_{\pm\alpha}$ is necessarily quadratic.
Furthermore,
\begin{itemize}
\item
when $F_{\alpha}/F_{\pm\alpha}$ is unramified, we say $\alpha$ is \textit{symmetric unramified}, and
\item
when $F_{\alpha}/F_{\pm\alpha}$ is ramified, we say $\alpha$ is \textit{symmetric ramified}.
\end{itemize}
\end{itemize}
Note that a root $\alpha$ is symmetric if and only if the $\Gamma_{F}$-orbit of $\alpha$ contains $-\alpha$.
We write $\Phi(\bfS,\G)^{\sym}$, $\Phi(\bfS,\G)_{\sym}$, $\Phi(\bfS,\G)_{\sym,\ur}$, and $\Phi(\bfS,\G)_{\sym,\ram}$ for the set of asymmetric roots, symmetric roots, symmetric unramified roots, and symmetric ramified roots, respectively.
\end{defn}

\begin{rem}\label{rem:root}
In this paper, we follow the definition of symmetric unramified and ramified roots given by Adler--DeBacker--Spice (see \cite[Definition 2.6]{MR3849622} or also \cite[Section 2.2]{MR4013740}).
Note that this definition is different from Tam's one used in his paper (see \cite[1710 page]{MR3509939}).
In Section \ref{subsec:root}, we investigate the relation between Tam's notion of symmetric unramified and ramified roots and Adler--DeBacker--Spice's one in the case of $\GL_{n}$.
\end{rem}

We next recall the notion of $\chi$-data.
A set of characters $\{\chi_{\alpha}\colon F_{\alpha}^{\times}\rightarrow \C^{\times}\}_{\alpha\in\Phi(\bfS,\G)}$ indexed by the elements of $\Phi(\bfS,\G)$ is called \textit{a set of $\chi$-data} if it satisfies the following conditions:
\begin{itemize}
\item
For every $\alpha\in\Phi(\bfS,\G)$ and $\sigma\in\Gamma_{F}$, we have $\chi_{\alpha}^{-1}=\chi_{-\alpha}$ and $\chi_{\sigma(\alpha)}=\chi_{\alpha}\circ\sigma^{-1}$.
\item
For every $\alpha\in\Phi(\bfS,\G)_{\sym}$, the restriction $\chi_{\alpha}|_{F_{\pm\alpha}^{\times}}$ of $\chi_{\alpha}$ to $F_{\pm\alpha}^{\times}$ is equal to the quadratic character corresponding to the quadratic extension $F_{\alpha}/F_{\pm\alpha}$.
\end{itemize}

By using a set of $\chi$-data with respect to $(\bfS,\G)$, we can construct an embedding of the $L$-group ${}^{L}\bfS$ into that ${}^{L}\G$ of $\G$ in  the following manner.
First, we take an element $g\in\G(\ol{F})$ which induces an isomorphism $\Int(g)\colon \bfS\cong\bfT$ between $\bfS$ and the maximal torus $\bfT$ in the fixed splitting $\spl_{\G}$ by conjugation (note that this isomorphism is over $\ol{F}$).
Then, by considering its dual, we get an isomorphism between the dual torus $\widehat{\bfS}$ of $\bfS$ and $\mcT$ belonging to the splitting $\spl_{\widehat{\G}}$.
We write $\widehat{j}$ for the embedding 
\[
\widehat{j}\colon\widehat{\bfS}\cong\mcT\hookrightarrow\widehat{\G}
\]
obtained in this way (note that this embedding is defined canonically up to $\widehat{\G}$-conjugation).
Then, if we have a set $\chi:=\{\chi_{\alpha}\}$ of $\chi$-data, we can extend $\widehat{j}$ to an $L$-embedding according to Langlands--Shelstad's construction.
More precisely, we get a $\widehat{\G}$-conjugacy class of $L$-embeddings of ${}^{L}\bfS$ into ${}^{L}\G$.
Here we do not recall the construction of this extension.
See \cite[Section 2.6]{MR909227} for the details (see also \cite[Section 6.2]{MR3509939}).
We write ${}^{L}\!j_{\chi}$ for this extended $L$-embedding
\[
{}^{L}\!j_{\chi}\colon{}^{L}\bfS\hookrightarrow{}^{L}\G
\]
and call it \textit{Langlands--Shelstad's $L$-embedding} determined by the $\chi$-data $\chi=\{\chi_{\alpha}\}$.

\begin{rem}
Strictly speaking, this $L$-embedding ${}^{L}\!j_{\chi}$ is well-defined only up to $\widehat{\G}$-conjugation.
However, since our purpose is to construct an $L$-parameter of $\G$ by using this $L$-embedding, we do not have to take care of this ambiguity.
More concretely, taking $\widehat{\G}$-conjugation does not change the isomorphism class of an $n$-dimensional Weil--Deligne representation.
\end{rem}

\subsection{Tam's description of Langlands--Shelstad's $L$-embeddings}\label{subsec:Tam}
Let us assume that $\bfS$ is elliptic in $\G$.
Then it is well-known that $\bfS$ corresponds to a finite extension $E$ of $F$  which is of degree $n$, that is, $\bfS$ is isomorphic to $\Res_{E/F}\Gm$.
Recall that, by using this field $E$, we can describe the set $\Gamma_{F}\backslash\Phi(\bfS,\G)$ of $\Gamma_{F}$-orbits of absolute roots of $\bfS$ in $\G$ in the following manner (see \cite[Section 3.1]{MR3509939} for the details).
First we fix a set $\{g_{1},\ldots,g_{n}\}$ of representatives of the quotient $\Gamma_{F}/\Gamma_{E}$ such that $g_{1}=\id$.
Then we get an isomorphism
\[
\bfS(\overline{F})
\xrightarrow{\cong}
\prod_{i=1}^{n} \overline{F}^{\times}
\]
which maps $x\in E^{\times}\cong\bfS(F)$ to $(g_{1}(x),\ldots,g_{n}(x))$.
Then the projections 
\[
\delta_{i}
\colon 
\bfS(\overline{F})\xrightarrow{\cong}\prod_{i=1}^{n} \overline{F}^{\times}\rightarrow \overline{F}^{\times}
;\quad
(x_{1},\ldots,x_{n})\mapsto x_{i}
\]
form a $\Z$-basis of the group $X^{\ast}(\bfS)$ of (absolute) characters of $\bfS$.
Under this usage of notation, the set $\Phi(\bfS,\G)$ of absolute roots of $\bfS$ in $\G$ is given by
\[
\biggl\{
\begin{bmatrix}g_{i}\\g_{j}\end{bmatrix}:=\delta_{i}-\delta_{j}
\,\bigg\vert\,
1\leq i \neq j \leq n
\biggr\}
\]
and the set $\Gamma_{F}\backslash\Phi(\bfS,\G)$ is described as follows:
\[
(\Gamma_{E}\backslash\Gamma_{F}/\Gamma_{E})'
\xrightarrow{1:1}
\Gamma_{F}\backslash\Phi(\bfS,\G)
;\quad
\Gamma_{E}g_{i}\Gamma_{E}\mapsto \Gamma_{F}\cdot\begin{bmatrix}1\\g_{i}\end{bmatrix},
\tag{$\ast$}
\]
where $(\Gamma_{E}\backslash\Gamma_{F}/\Gamma_{E})'$ is the set of nontrivial double-$\Gamma_{E}$-cosets in $\Gamma_{F}$.

If we fix a subset $\mathcal{D}$ of $\{g_{1},\ldots,g_{n}\}$ such that the double-$\Gamma_{E}$-cosets represented by elements of $\mathcal{D}$ map to $\Gamma_{F}\backslash\Phi(\bfS,\G)$ bijectively under the map $(\ast)$, then we have
\[
\Gamma_{F}\backslash\Phi(\bfS,\G)
=\biggl\{
\Gamma_{F}\cdot\begin{bmatrix}1\\g\end{bmatrix}
\,\bigg\vert\,
g\in\mathcal{D}
\biggr\}.
\]
In this paper, we choose such a $\mathcal{D}$ in the same way as explained in \cite[1708 page]{MR3509939}.

Now let us recall Tam's description of Langlands--Shelstad's $L$-embeddings determined by $\chi$-data.
We suppose that we have a set $\chi$ of $\chi$-data with respect to $\bfS$.
Then, as we explained in Section \ref{subsec:LS}, we obtain an $L$-embedding ${}^{L}\!j_{\chi}\colon{}^{L}\bfS\hookrightarrow{}^{L}\G$ from $\chi$.
On the other hand, if we have a character $\xi$ of $E^{\times} (\cong\bfS(F))$, then we can attach to it an $L$-parameter $\phi_{\xi}\colon W_{F}\rightarrow{}^{L}\bfS$ by the local Langlands correspondence for tori (see, for example, \cite{MR2508725}).
By composing it with the $L$-embedding ${}^{L}\!j_{\chi}$, we may regard it as an $L$-parameter of $\G$:
\[
{}^{L}\!j_{\chi}\circ\phi_{\xi}
\colon
W_{F} \xrightarrow{\phi_{\xi}} {}^{L}\bfS \xrightarrow{{}^{L}\!j_{\chi}} {}^{L}\G.
\]
As the $L$-group ${}^{L}\G$ of $\G$ is given by $\GL_{n}(\C)\times W_{F}$, by considering the projection from ${}^{L}\G$ to $\GL_{n}(\C)$, we may furthermore regard ${}^{L}\!j_{\chi}\circ\phi_{\xi}$ as an $n$-dimensional representation of $W_{F}$.
Then it can be described as follows:
\begin{prop}[{\cite[Proposition 6.5]{MR3509939}}]\label{prop:Tam}
We put $\mu_{\chi}$ to be the character of $E^{\times}$ defined by
\[
\mu_{\chi}:=
\prod_{[\alpha]\in\Gamma_{F}\backslash\Phi(\bfS,\G)} \chi_{\alpha}|_{E^{\times}}.
\]
Here, for each $[\alpha]\in\Gamma_{F}\backslash\Phi(\bfS,\G)$, we take its representative $\alpha\in\Phi(\bfS,\G)$ to be of the form $\begin{bmatrix}1\\g\end{bmatrix}$ for some (unique) $g\in\mathcal{D}$.
Then the $L$-parameter ${}^{L}\!j_{\chi}\circ\phi_{\xi}$ is isomorphic to $\Ind_{W_{E}}^{W_{F}} (\xi\mu_{\chi})$ as an $n$-dimensional representation of $W_{F}$.
\end{prop}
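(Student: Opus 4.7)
The plan is to directly unravel the composition ${}^{L}\!j_{\chi}\circ\phi_{\xi}$ and identify its matrix realization with the induced representation $\Ind_{W_{E}}^{W_{F}}(\xi\mu_{\chi})$, following essentially Tam's strategy in \cite[Section 6]{MR3509939}. First I would make explicit all the data involved: under $\bfS\cong\Res_{E/F}\Gm$, Shapiro's lemma yields $\widehat{\bfS}(\C)\cong\prod_{i=1}^{n}\C^{\times}$ indexed by the fixed representatives $g_{1},\ldots,g_{n}$ of $\Gamma_{F}/\Gamma_{E}$, and the canonical embedding $\widehat{j}\colon\widehat{\bfS}\hookrightarrow\widehat{\G}=\GL_{n}(\C)$ becomes the diagonal embedding in this basis. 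The $L$-parameter $\phi_{\xi}$ from the local Langlands correspondence for tori admits an explicit formula whose $\widehat{\bfS}$-component at $w\in W_{F}$ is built coordinate-wise from $\xi$-values on appropriate translates of $w$ by the $g_{i}$.

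Next I would expand Langlands--Shelstad's extension of $\widehat{j}$ to ${}^{L}\!j_{\chi}$. By construction (see \cite[Section 2.6]{MR909227}), ${}^{L}\!j_{\chi}$ adjusts the naive set-theoretic extension by a $1$-cocycle of $W_{F}$ valued in $\widehat{\bfS}$ that is built explicitly out of the $\chi$-data, together with a $W_{F}$-equivariant choice of permutation matrices realizing the action of $W_{F}$ on the coordinates of $\widehat{\bfS}$. Composing with $\phi_{\xi}$ and projecting to $\GL_{n}(\C)$ then produces a concrete matrix formula for $({}^{L}\!j_{\chi}\circ\phi_{\xi})(w)$ as a product of a permutation matrix (encoding the action of $w$ on the cosets $\Gamma_{F}/\Gamma_{E}$) and a diagonal matrix whose entries interleave values of $\xi$ with values of the $\chi_{\alpha}$.

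Finally I would compare this formula with the standard description of $\Ind_{W_{E}}^{W_{F}}(\xi\mu_{\chi})$ in the basis indexed by $g_{1},\ldots,g_{n}$: the permutation parts match automatically, so the comparison reduces by Frobenius reciprocity to a verification on $W_{E}$ in the identity coordinate. The main obstacle is the bookkeeping in this last step, namely matching the $\chi_{\alpha}$-factors that arise in the Langlands--Shelstad cocycle (indexed naturally by orbits of roots together with a choice of section of $W_{F}\to W_{F}/W_{F_{\alpha}}$) with the elementary product $\mu_{\chi}=\prod_{[\alpha]}\chi_{\alpha}|_{E^{\times}}$. This requires a careful case analysis separating asymmetric, symmetric unramified, and symmetric ramified orbits, compatible with the chosen double coset representatives $\mathcal{D}$; this case analysis is the technical core of \cite[Proposition 6.5]{MR3509939}, and I would follow that reference for the detailed computation.
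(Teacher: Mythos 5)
Your proposal is correct and, at the level of strategy, matches exactly what the paper relies on: the paper itself does not supply a proof but cites Tam's \cite[Proposition 6.5]{MR3509939}, and your outline (explicit dual torus, $\phi_{\xi}$ via LLC for tori, Langlands--Shelstad's $r_{p}$-cocycle correction of the diagonal embedding, projection to $\GL_{n}(\C)$, and comparison with the induced representation in the coset basis) is precisely the shape of Tam's argument. You correctly identify the bookkeeping of the $\chi_{\alpha}$-factors against $\mu_{\chi}=\prod_{[\alpha]}\chi_{\alpha}|_{E^{\times}}$, via the case analysis over asymmetric and symmetric orbits compatible with the chosen representatives $\mathcal{D}$, as the technical core that is carried out in the cited reference.
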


Here we note that, if we suppose that a root $\alpha\in\Phi(\bfS,\G)$ is of the form $\begin{bmatrix}g_{i}\\g_{j}\end{bmatrix}=\delta_{i}-\delta_{j}$, then the field $F_{\alpha}$ is given by the composite field $g_{i}(E)\cdot g_{j}(E)$ of $g_{i}(E)$ and $g_{j}(E)$.
In particular, under the choice of a representative $\alpha$ of $[\alpha]$ as in Proposition \ref{prop:Tam}, the source $F_{\alpha}^{\times}$ of the character $\chi_{\alpha}$ contains $E^{\times}$.
Thus restricting $\chi_{\alpha}$ to $E^{\times}$ makes sense.

\subsection{Bushnell--Henniart's essentially tame LLC}\label{subsec:BH}
In \cite{MR1876802}, Harris and Taylor (and also Henniart in \cite{MR1738446}) constructed a bijective map, which is called the local Langlands correspondence for $\GL_{n}$, from the set $\Pi(\G)$ of equivalence classes of irreducible smooth representations of $\G(F)$ to the set $\Phi(\G)$ of $\widehat{\G}$-conjugacy classes of $L$-parameters of $\G$ (or equivalence classes of $n$-dimensional semisimple Weil--Deligne representations).
We write $\LLC_{\G}^{\HT}$ for this map:
\[
\LLC_{\G}^{\HT}\colon\Pi(\G)\xrightarrow{1:1}\Phi(\G).
\]
Then Bushnell--Henniart's \textit{essentially tame local Langlands correspondence} gives an explicit description of this map $\LLC_{\G}^{\HT}$ for so-called ``essentially tame supercuspidal representations''.
Let us recall what these are briefly.

First, from an $F$-admissible pair $(E,\xi)$, which is a pair of 
\begin{itemize}
\item
a finite tamely ramified extension $E$ of $F$ of degree $n$ and
\item
an $F$-admissible character $\xi$ (see \cite[Definition in 686 page]{MR2138141} for the definition of the $F$-admissibility of a character),
\end{itemize}
we obtain an irreducible supercuspidal representation $\pi_{(E,\xi)}^{\BH}$ of $\G(F)$ according to the construction of Bushnell--Henniart (see \cite[Section 2]{MR2138141}, \cite[Section 5]{MR3509939}, or Appendix of this paper).
We call a representation obtained in this way an \textit{essentially tame supercuspidal representation}.

\begin{rem}
Precisely speaking, Bushnell and Henniart first defined essentially tame supercuspidal representations by using of the notion of the torsion number of supercuspidal representations (see the beginning of \cite[Section 2]{MR2138141}).
Then they gave a characterization of essentially tame supercuspidal representations in terms of $F$-admissible pairs, that is, they proved that essentially tame supercuspidal representations are parametrized by $F$-admissible pairs (\cite[Theorem 2.3]{MR2138141}).
In this paper we do not go back to the original definition since only the latter characterization is important from our viewpoint.
\end{rem}

For every essentially tame supercuspidal representation, Bushnell--Henniart proved that its image under the map $\LLC_{\G}^{\HT}$ can be described by using a character $\mu_{\rec}$ which depends on each $F$-admissible pair $(E,\xi)$ and is called the \textit{rectifier} of $(E,\xi)$ as follows:

\begin{thm}[{\cite{MR2138141, MR2148193, MR2679700}}]\label{thm:BH}
There exists a tamely ramified character $\mu_{\rec}$ of $E^{\times}$, which can be described explicitly in terms of $E$ and $\xi$, satisfying 
\[
\LLC_{\G}^{\HT}(\pi_{(E,\xi\mu_{\rec})}^{\BH})
=
\Ind_{W_{E}}^{W_{F}}\xi,
\]
or equivalently, 
\[
\LLC_{\G}^{\HT}(\pi_{(E,\xi)}^{\BH})
=
\Ind_{W_{E}}^{W_{F}}(\xi\mu_{\rec}^{-1}).
\]
\end{thm}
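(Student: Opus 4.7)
The plan is to treat $\mu_{\rec}$ first as an abstract object and then derive an explicit formula. Concretely, I would start by showing that the map sending an $F$-admissible pair $(E,\xi)$ to $\Ind_{W_{E}}^{W_{F}}\xi$ is essentially a bijection between $F$-admissible pairs modulo $W_{F}$-conjugacy and isomorphism classes of essentially tame irreducible $n$-dimensional representations of $W_{F}$. Since Bushnell--Henniart's parametrization $(E,\xi)\mapsto \pi_{(E,\xi)}^{\BH}$ realizes the analogous classification on the automorphic side, composing with $\LLC_{\G}^{\HT}$ yields a self-bijection on the set of pairs $(E,\xi)$. The first task is to show that this self-bijection preserves the field $E$ and modifies the character $\xi$ only by a tamely ramified twist $\mu_{\rec}^{-1}$; this is the existence assertion.

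To extract such a tame twist, I would use the fact that $\LLC_{\G}^{\HT}$ preserves both the conductor (hence the depth) and all twists by one-dimensional characters. For a fixed admissible pair $(E,\xi)$, write $\LLC_{\G}^{\HT}(\pi_{(E,\xi)}^{\BH})=\Ind_{W_{E'}}^{W_{F}}\xi'$ for some $(E',\xi')$. Matching the wild parts of the induced representations--using the explicit description of the ``wild part'' of $\pi_{(E,\xi)}^{\BH}$ coming from Yu's / Bushnell--Henniart's construction, and the formula for the restriction of an induced representation to wild inertia via Mackey--forces $E'=E$ and shows that $\xi'/\xi$ is trivial on $U_{E}^{1}$, hence is tamely ramified. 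This character is then by definition $\mu_{\rec}^{-1}$.

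For the explicit formula, I would exploit that $\mu_{\rec}$ is determined on $E^{\times}$ by its values on a small set of test elements: a generator of $\mu_{F}$ inside $k_{E}^{\times}$, a uniformizer, and a tame character can all be detected by twisting with suitable characters $\chi$ of $F^{\times}$ and comparing epsilon factors $\varepsilon(\pi_{(E,\xi)}^{\BH}\otimes\chi,\tfrac12,\psi_{F})$ with $\varepsilon(\Ind_{W_{E}}^{W_{F}}(\xi\mu_{\rec}^{-1})\otimes\chi,\tfrac12,\psi_{F})$. Using the inductivity of epsilon factors together with the Langlands constant $\lambda_{E/F}(\psi_{F})$, these identities reduce on both sides to tame epsilon factors on $E^{\times}$ that can be expressed via Gauss sums $\mfg(\psi)$. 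Solving for $\mu_{\rec}$ on each type of test element then gives the formula in terms of Jacobi symbols, Gauss sums, and Langlands constants as listed in Section~\ref{sec:notation}.

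The main obstacle is the explicit determination of the ``wild'' contribution to $\mu_{\rec}$: although $\mu_{\rec}$ is tame, the formula for its value at a uniformizer involves the finer data of the jumps in the filtration of $\xi$ and requires a careful recursion along the defining tower of subfields of $E$ for $\xi$ (the Howe factorization of $(E,\xi)$). Handling the non-Galois case and, above all, the distinction between symmetric unramified and symmetric ramified roots, which is exactly the distinction that later governs the comparison with Kaletha's $\chi$-data, is the technical heart of the argument; this is where one must invoke the stabilization of epsilon factors under base change along the maximal tame subextensions, essentially the content of the three papers \cite{MR2138141,MR2148193,MR2679700}.
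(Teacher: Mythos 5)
This theorem is quoted in the paper without proof: the authors cite it as the combined main result of Bushnell--Henniart's three papers \cite{MR2138141,MR2148193,MR2679700} and then build on it as a known ingredient, so there is no ``paper's own proof'' to compare against.

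As a reconstruction of Bushnell--Henniart's actual argument, your sketch captures the broad outline. One first checks that $F$-admissible pairs parametrize both essentially tame supercuspidals of $\GL_n(F)$ and essentially tame irreducible $n$-dimensional Weil representations, then verifies that $\LLC^{\HT}_{\G}$ induces a bijection on pairs that fixes $E$ and perturbs $\xi$ only by a tame character $\mu_{\rec}^{-1}$, and finally pins $\mu_{\rec}$ down by comparing local constants twisted by auxiliary characters, using the inductivity of epsilon factors, Langlands constants, and Gauss sums. That is genuinely the shape of the argument across \cite{MR2138141} (existence and first properties of $\mu_{\rec}$), \cite{MR2148193} (the totally ramified case via explicit Gauss-sum computations), and \cite{MR2679700} (the general case by induction along the Howe factorization). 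Two precision caveats: the step ``forces $E'=E$'' is subtler than matching wild parts alone, since distinct tame extensions can share the numerical invariants visible from the conductor, and Bushnell--Henniart need the full admissibility analysis to conclude. And your final paragraph's appeal to symmetric unramified versus symmetric ramified roots is anachronistic: that root-theoretic dichotomy is Tam's later reinterpretation (\cite{MR3509939}, recalled in Section \ref{subsec:root} of this paper), whereas Bushnell--Henniart's own casework is phrased purely in terms of subfields of $E$ and their quadratic subextensions. Those are cosmetic issues, though; the strategy you describe is faithful to the cited proofs.
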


Note that although we simply write $\mu_{\rec}$ for the rectifier, it is defined for each $F$-admissible pair $(E,\xi)$ and depends on $(E,\xi)$.

What Tam noticed is that this mysterious character $\mu_{\rec}$ of $E^{\times}$ has a factorization in terms of $\chi$-data.
Let us take a tamely ramified elliptic maximal torus $\bfS$ of $\G$ which is isomorphic to $\Res_{E/F}\Gm$.
Then Tam produced a set $\chi_{\Tam}=\{\chi_{\Tam,\alpha}\}_{\alpha}$ of $\chi$-data with respect to $(\bfS,\G)$ from an $F$-admissible pair $(E,\xi)$ and proved the following:

\begin{thm}[{\cite[Theorem 7.1]{MR3509939}}]\label{thm:Tam}
Let $\mu_{\rec}$ be the rectifier with respect to an $F$-admissible pair $(E,\xi)$.
Then we have
\[
\mu_{\rec}
=
\mu_{\chi_{\Tam}}.
\]
Here the right-hand side is a character of $E^{\times}$ determined by $\chi_{\Tam}$ in the manner of Proposition \ref{prop:Tam}.
\end{thm}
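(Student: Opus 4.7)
The plan is to construct Tam's $\chi$-data $\chi_{\Tam}=\{\chi_{\Tam,\alpha}\}$ explicitly from the $F$-admissible pair $(E,\xi)$ and then to match the resulting character $\mu_{\chi_{\Tam}}$ on $E^{\times}$, defined as in Proposition \ref{prop:Tam}, against Bushnell--Henniart's explicit description of $\mu_{\rec}$. Both sides come equipped with a natural factorization: $\mu_{\chi_{\Tam}}$ factors by Proposition \ref{prop:Tam} over $\Gamma_{F}$-orbits $[\alpha]\in\Gamma_{F}\backslash\Phi(\bfS,\G)$, while $\mu_{\rec}$ factors in Bushnell--Henniart's formulas from \cite{MR2138141,MR2148193,MR2679700} along a tower of intermediate fields $F\subsetneq E^{d}\subsetneq\cdots\subsetneq E^{1}\subsetneq E^{0}=E$ coming from the successive jumps of $\xi$. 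The whole argument is a factor-by-factor comparison of these two decompositions.

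First I would define $\chi_{\Tam,\alpha}$ case-by-case on the symmetry type of $\alpha$. For asymmetric $\alpha$ only the equivariance constraints $\chi_{\sigma\alpha}=\chi_{\alpha}\circ\sigma^{-1}$ and $\chi_{-\alpha}=\chi_{\alpha}^{-1}$ are imposed, so $\chi_{\Tam,\alpha}$ may be set equal to a suitable tame character of $F_{\alpha}^{\times}$ built from $\xi$. For symmetric unramified $\alpha$, the restriction to $F_{\pm\alpha}^{\times}$ must be the unramified quadratic character, which is achieved by a quadratic twist of a canonical extension of $\xi|_{F_{\pm\alpha}^{\times}}$. For symmetric ramified $\alpha$, the restriction to $F_{\pm\alpha}^{\times}$ must be the ramified quadratic character, and $\chi_{\Tam,\alpha}$ is constructed out of the Gauss sum $\mfn(\psi_{F_{\pm\alpha}})$ and the Jacobi symbol on $k_{F_{\alpha}}^{\times}$. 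Checking the $\chi$-data axioms is then routine; the only nontrivial verification is Galois-equivariance in the symmetric ramified case, which follows from the transformation law of Gauss sums under Frobenius together with the oddness of $p$.

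The core of the proof is the identification
\[
\prod_{[\alpha]\in\Gamma_{F}\backslash\Phi(\bfS,\G)}\chi_{\Tam,\alpha}\big|_{E^{\times}}
\;=\;
\mu_{\rec}.
\]
I would partition the orbits by the depth at which $F_{\pm\alpha}$ sits in the tower $\{E^{i}\}$ and show stratum by stratum that the asymmetric roots contribute the transfer-type factor in Bushnell--Henniart's formula, the symmetric unramified roots contribute the tame quadratic twist, and the symmetric ramified roots contribute the Gauss-sum/Jacobi-symbol factor. Each match reduces to a classical identity among Jacobi symbols, Gauss sums, and Langlands constants of the type collected in Section \ref{sec:notation}, combined with careful bookkeeping of $e(E/E^{i})$, $f(E/E^{i})$, and the cardinalities of the orbits at each level.

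The hard part is setting up a clean dictionary between the two indexings. The parameterization of $\Gamma_{F}\backslash\Phi(\bfS,\G)$ by double cosets $(\Gamma_{E}\backslash\Gamma_{F}/\Gamma_{E})'$ recalled in Section \ref{subsec:Tam} is natural from the Langlands--Shelstad perspective but has no a priori relation to the Bushnell--Henniart tower; for each double coset $\Gamma_{E}g\Gamma_{E}$ one has to identify the field $F_{\alpha}=E\cdot g(E)$, determine its position in the tower $\{E^{i}\}$, and read off the symmetry class of $\alpha$. Once this combinatorial translation is in hand, the remaining verification is entirely elementary and requires no further representation-theoretic input.
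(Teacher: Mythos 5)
This statement is not proved in the present paper at all: it is imported verbatim from Tam's work, cited as \cite[Theorem 7.1]{MR3509939}, and the authors merely recall the \emph{definition} of $\chi_{\Tam}$ later (in Sections~\ref{subsec:asymm}, \ref{subsec:symmur}, \ref{subsec:symmram}) so that they can compare it with $\chi_{\Kal}$. So there is no internal proof to match your outline against; you are sketching Tam's argument.

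As a sketch of Tam's route your outline has the right global shape (define $\chi_{\Tam,\alpha}$ by symmetry type, factor $\mu_{\rec}$ over the Bushnell--Henniart tower, compare stratum by stratum), but it seriously understates the actual engine of the proof and mischaracterizes where the difficulty sits. The characters $\chi_{\Tam,\alpha}$ are not just ``suitable tame characters built from $\xi$'' or ``a quadratic twist of a canonical extension of $\xi|_{F_{\pm\alpha}^\times}$'': as recalled in Sections~\ref{subsec:asymm}--\ref{subsec:symmram}, their values on $\mu_E$ and $\varpi_E$ are expressed through the Heisenberg/Weil t-factors $t^0_{\boldsymbol\mu}$, $t^1_{\boldsymbol\mu}$, $t_\varpi$, $t(\mfW_{[g]})$ and the signs $\sgn_\gamma(\mfV_{[g]})$ attached to the symplectic and orthogonal $k_F[E^\times/F^\times U_E^1]$-modules $\mfV_{[g]}$, $\mfW_{[g]}$ coming from the quotients $J^1/H^1$ and $\mfA/\mfP$. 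That machinery, together with the comparison against Bushnell--Henniart's inductively defined $\nu$-rectifiers, is the substance of Tam's Theorem~7.1; one cannot bypass it. Saying ``the only nontrivial verification is Galois-equivariance in the symmetric ramified case'' misses the mark: the axioms for $\chi$-data are checked quickly, while the real work is computing the t-factors and matching them with the Gauss sums, Langlands constants, and symplectic signs in Bushnell--Henniart's formulas. You also elide a genuine subtlety flagged in Remark~\ref{rem:root} and worked out in Section~\ref{subsec:root}: Tam's notion of (un)ramified symmetric root differs from the Adler--DeBacker--Spice notion used elsewhere in this paper, so any factor-by-factor bookkeeping must first pin down which convention is in play (your strata must be cut according to Tam's, not ADS's, classification).
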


The definition of Tam's $\chi$-data $\chi_{\Tam}$ is given in \cite[Theorem 7.1]{MR3509939}.
Later we will recall it precisely (Sections \ref{subsec:asymm}, \ref{subsec:symmur}, and \ref{subsec:symmram}).
As a consequence of Theorems \ref{thm:BH} and \ref{thm:Tam}, we get the following corollary:

\begin{cor}\label{cor:BH}
Let $\bfS$ be an elliptic maximal torus of $\G$ which is isomorphic to $\Res_{E/F}\Gm$ for a finite tamely ramified extension $E$ of $F$ of degree $n$.
Let $\xi$ be an $F$-admissible character of $\bfS(F)\cong E^{\times}$ and $\pi_{(E,\xi)}^{\BH}$ the irreducible supercuspidal representation of $\GL_{n}(F)$ arising from the pair $(E,\xi)$.
Then we have
\[
\LLC_{\G}^{\HT}(\pi_{(E,\xi)}^{\BH})
=
\Ind_{W_{E}}^{W_{F}}(\xi\mu_{\chi_{\Tam}}^{-1}).
\]
\end{cor}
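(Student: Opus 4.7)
The plan is to observe that this corollary is an immediate consequence of the two theorems just stated, so the proof reduces to a direct substitution. First I would invoke Theorem \ref{thm:BH}, which gives the identity
\[
\LLC_{\G}^{\HT}(\pi_{(E,\xi)}^{\BH}) = \Ind_{W_{E}}^{W_{F}}(\xi\mu_{\rec}^{-1}),
\]
where $\mu_{\rec}$ denotes the Bushnell--Henniart rectifier attached to the $F$-admissible pair $(E,\xi)$. This is the form of the essentially tame LLC that already expresses $\LLC_{\G}^{\HT}(\pi_{(E,\xi)}^{\BH})$ as an induced representation of $W_F$, so no further representation-theoretic input is needed.

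Next I would apply Theorem \ref{thm:Tam}, which identifies the rectifier with the character $\mu_{\chi_{\Tam}}$ produced from Tam's specific choice of $\chi$-data $\chi_{\Tam}$, i.e. $\mu_{\rec} = \mu_{\chi_{\Tam}}$. Substituting this equality into the previous displayed formula yields
\[
\LLC_{\G}^{\HT}(\pi_{(E,\xi)}^{\BH}) = \Ind_{W_{E}}^{W_{F}}(\xi\mu_{\chi_{\Tam}}^{-1}),
\]
which is the desired assertion. There is essentially no obstacle here: the whole content of the corollary is packaged into Theorems \ref{thm:BH} and \ref{thm:Tam}, and the only tacit point to check is that the $F$-admissibility of $\xi$ is precisely the hypothesis required to make both theorems applicable, so that the pair $(E,\xi)$ on which $\mu_{\rec}$ and $\chi_{\Tam}$ are defined is the same pair appearing in the statement. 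The one minor bookkeeping step is to record that $\chi_{\Tam}$ is attached to a tamely ramified elliptic maximal torus $\bfS \cong \Res_{E/F}\Gm$ corresponding to $E$, which matches the torus fixed in the hypothesis; once this identification is noted, the corollary follows by the substitution above.
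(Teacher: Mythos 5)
Your proposal is correct and matches the paper's (implicit) argument exactly: the corollary is stated immediately after Theorems \ref{thm:BH} and \ref{thm:Tam} with the remark that it is a consequence of those two, and the intended proof is precisely the substitution $\mu_{\rec}=\mu_{\chi_{\Tam}}$ into the rectifier formula, as you carry out. Nothing further is needed.
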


\section{Kaletha's LLC for regular supercuspidal representations}\label{sec:Kaletha}
In this section, we quickly review Kaletha's construction of the local Langlands correspondence in the case of $\GL_{n}$.
In \cite{MR4013740}, he constructed the local Langlands correspondence for supercuspidal representations satisfying some regularity condition, which are called ``regular supercuspidal representations'', of tamely ramified connected reductive groups, under a mild restriction on the residual characteristic $p$.
In particular, his construction works for $\G=\GL_{n}$ whenever $p$ is an odd prime number.

\subsection{Regular supercuspidal representations}\label{subsec:rsc}
In this section, let us temporarily consider a general situation where $\G$ is a tamely ramified connected reductive group over $F$.
In general, \textit{regular supercuspidal representations} are defined to be supercuspidal representations obtained from ``\textit{regular Yu-data}'' according to Yu's construction.
Here recall that a ``(regular) reduced generic cuspidal $\G$-datum'' consists of 
\begin{itemize}
\item
a sequence $\G^{0}\subsetneq\G^{1}\subsetneq\cdots\subsetneq\G^{d}=\G$ of tame twisted Levi subgroups of $\G$ over $F$,
\item
an irreducible depth zero supercuspidal representation $\pi_{-1}$ of $\G^{0}(F)$, and
\item
characters $\phi_{i}\colon\G^{i}(F)\rightarrow\C^{\times}$ for $0\leq i\leq d$
\end{itemize}
satisfying several conditions.
We do not recall the precise definition of a (regular) reduced generic cuspidal $\G$-datum or Yu's construction of supercuspidal representations.
See \cite[Section 3.5 and Definition 3.7.3]{MR4013740} for the definition of a (regular) reduced generic cuspidal $\G$-datum.
In the following, we call a (regular) reduced generic cuspidal $\G$-datum simply a (regular) Yu-datum.
For a given Yu-datum $\Psi=(\G^{0}\subsetneq\G^{1}\subsetneq\cdots\subsetneq\G^{d},\pi_{-1},(\phi_{0},\ldots,\phi_{d}))$, let $\pi_{\Psi}^{\Yu}$ denote the irreducible supercuspidal representation obtained from $\Psi$ by Yu's construction.

One of the important discoveries of Kaletha is the following fact saying that regular Yu-data bijectively correspond to ``\textit{tame elliptic regular pairs}'', which are much simpler objects consisting only of tamely ramified elliptic tori and their characters (see \cite[Definition 3.7.5]{MR4013740} for the precise definition of a tame elliptic regular pair):

\begin{prop}[{\cite[Proposition 3.7.8]{MR4013740}}]\label{prop:reparametrization}
Suppose that we have a regular Yu-datum $(\G^{0}\subsetneq\G^{1}\subsetneq\cdots\subsetneq\G^{d},\pi_{-1},(\phi_{0},\ldots,\phi_{d}))$.
Then we can construct a tamely ramified elliptic maximal torus $\bfS$ of $\G$ contained in $\G^{0}$ and a character $\phi_{-1}$ on it such that the pair $(\bfS,\prod_{i=-1}^{d}\phi_{i}|_{\bfS(F)})$ is tame elliptic regular.
Moreover, this procedure
\[
\bigl(\G^{0}\subsetneq\G^{1}\subsetneq\cdots\subsetneq\G^{d},\pi_{-1},(\phi_{0},\ldots,\phi_{d})\bigr)
\mapsto
\biggl(\bfS,\prod_{i=-1}^{d}\phi_{i}|_{\bfS(F)}\biggr)
\]
gives a bijection from the set of $\G$-equivalence classes of regular Yu-data to the set of $\G(F)$-conjugacy classes of tame elliptic regular pairs.
\end{prop}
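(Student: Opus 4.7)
The plan is to build a forward map from regular Yu-data to tame elliptic regular pairs by reading the torus off the depth-zero piece, and then invert it via a Howe-style factorization of the character on $\bfS(F)$.

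First I would extract the torus. Because the Yu-datum is regular, the depth-zero supercuspidal $\pi_{-1}$ of $\G^{0}(F)$ is a regular depth-zero supercuspidal, so DeBacker--Reeder's parametrization attaches to it a $\G^{0}(F)$-conjugacy class of pairs $(\bfS,\phi_{-1})$ where $\bfS\subset\G^{0}$ is a tamely ramified maximal torus, elliptic in $\G^{0}$, and $\phi_{-1}$ is a depth-zero character of $\bfS(F)$ in general position. Since $\G^{0}$ is a tame twisted Levi of $\G$, ellipticity in $\G^{0}$ upgrades to ellipticity in $\G$. I would then set
\[
\xi:=\phi_{-1}\cdot\prod_{i=0}^{d}\phi_{i}|_{\bfS(F)}
\]
and verify the tame elliptic regular axioms for $(\bfS,\xi)$. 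This amounts to translating Yu's genericity hypotheses on each $\phi_{i}$ (which control the depth of $d\phi_{i}$ relative to the root system of $\G^{i-1}\subset\G^{i}$) into root-theoretic non-triviality statements on the successive quotients of the depth filtration of $\xi|_{\bfS(F)}$.

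Next I would construct the inverse by Howe factorization. Starting from a tame elliptic regular pair $(\bfS,\xi)$, the depths $r_{0}<r_{1}<\cdots<r_{d}$ at which $\xi$ has jumps give a nested sequence of root subsystems of $\Phi(\bfS,\G)$: at each $r_{i}$, the regularity condition guarantees that the set of roots on which $d\xi$ is non-trivial cuts out a proper Levi subsystem. Taking centralizers of these subsystems yields a chain $\bfS\subset\G^{0}\subsetneq\cdots\subsetneq\G^{d}=\G$ of tame twisted Levi subgroups. I would then choose extensions of the layers of $\xi$ to characters $\phi_{i}$ of $\G^{i}(F)$, noting that different extensions are absorbed by the usual refactorization equivalence on Yu-data (shifting $\phi_{i}$ by a character trivial on the derived group of $\G^{i+1}$ and compensating in $\phi_{i+1}$). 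Finally, the depth-zero piece $(\bfS,\phi_{-1})$ is fed back through DeBacker--Reeder to recover $\pi_{-1}$.

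To conclude I would check well-definedness and bijectivity on equivalence classes. On the Yu-data side, $\G$-equivalence is generated by $\G(F)$-conjugation together with the refactorization moves above; both operations preserve the product $\prod_{i=-1}^{d}\phi_{i}|_{\bfS(F)}$ up to $\G(F)$-conjugation of $\bfS$, so the forward map descends. Composing forward then backward reproduces the original tower and factorization up to exactly this equivalence, and composing backward then forward returns the same $(\bfS,\xi)$ by construction.

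The main obstacle is the Howe factorization itself: showing that every tame elliptic regular character genuinely admits a factorization through a chain of tame twisted Levis, and that the factorization is unique up to the built-in refactorization relation. Existence rests on matching Kaletha's regularity condition with Yu's genericity GE1--GE2, which requires identifying the depths at which characters of $\bfS(F)$ become non-trivial with depths of generic elements in $(\Lie\G^{i})^{\ast}$; uniqueness requires controlling the ambiguity in extending characters from a tame elliptic torus to a twisted Levi, which is ultimately a cohomological computation in the tame setting. Once this factorization machinery is in place, the remaining verifications are formal.
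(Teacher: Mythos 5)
This result is quoted directly from Kaletha (Proposition~3.7.8 of \cite{MR4013740}); the paper under review does not reproduce a proof, so the comparison is really against Kaletha's argument. Your sketch matches it in outline: the forward map reads off a pair $(\bfS,\phi_{-1})$ from the regular depth-zero supercuspidal $\pi_{-1}$, assembles $\xi=\prod_{i=-1}^{d}\phi_{i}|_{\bfS(F)}$, and the inverse is supplied by a Howe factorization, with injectivity coming from the fact that any two Howe factorizations of the same character are refactorizations of each other (Kaletha's Lemma~3.6.6, existence being Proposition~3.6.7). A few calibration points. First, the depth-zero parametrization is not DeBacker--Reeder's: their work parametrizes depth-zero $L$-packets of unramified groups, whereas what is needed here is Kaletha's Lemma~3.4.12 and its surroundings, which attach a pair $(\bfS,\phi_{-1})$ (with $\bfS$ \emph{maximally unramified} elliptic in $\G^{0}$, not merely tamely ramified, and $\phi_{-1}$ depth-zero and regular) to a regular depth-zero supercuspidal of a general tame twisted Levi. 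Second, the claim that ellipticity of $\bfS$ in $\G^{0}$ upgrades to ellipticity in $\G$ is not automatic from $\G^{0}$ being a tame twisted Levi; it uses the anisotropy of $Z(\G^{0})/Z(\G)$, which is part of the cuspidality axioms of the Yu-datum. Third, the statement that the depth filtration $\{\Phi_{r}\}$ of $\Phi(\bfS,\G)$ produces Levi subsystems (Kaletha's Lemma~3.6.1) is a general fact, not a consequence of the regularity hypothesis; regularity is what guarantees that the residual depth-zero pair $(\bfS,\phi_{-1})$ is in general position so that $\pi_{-1}$ is an irreducible regular depth-zero supercuspidal. With those adjustments, the sketch is a faithful outline of Kaletha's proof.
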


Here, ``$\G$-equivalence'' is the equivalence relation for Yu-data introduced by Hakim--Murnaghan in \cite{MR2431732}.
Its important nature is that it describes the ``fibers'' of Yu's construction, that is, for any two Yu-data $\Psi_{1}$ and $\Psi_{2}$, we have $\pi_{\Psi_{1}}^{\Yu}\cong\pi_{\Psi_{2}}^{\Yu}$ if and only if $\Psi_{1}$ is $\G$-equivalent to $\Psi_{2}$.
See \cite[Definition 6.3]{MR2431732} or \cite[Section 3.5]{MR4013740} for the definition of $\G$-equivalence.
In particular, by Proposition \ref{prop:reparametrization} and the definition of regular supercuspidal representations, we can parametrize the set of equivalence classes of regular supercuspidal representations by the set of $\G(F)$-conjugacy classes of tame elliptic regular pairs of $\G$.
When a regular supercuspidal representation $\pi$ arises from a tame elliptic regular pair $(\bfS,\xi)$, we write $\pi=\pi_{(\bfS,\xi)}^{\KY}$.
Thus, if $(\bfS,\xi)$ corresponds to a $\G$-equivalence class represented by a Yu-datum $\Psi$, then we have $\pi_{(\bfS,\xi)}^{\KY}=\pi_{\Psi}^{\Yu}$ by definition.
\[
\xymatrix{
&\{\text{irred.\ s.c.\ rep'ns of $\G(F)$}\}/{\sim}\\
\{\text{Yu-data}\}/\text{$\G$-eq.} \ar@{->}[r]^-{1:1}_-{\text{Yu's construction}} & \{\text{Yu's s.c.\ rep'ns of $\G(F)$}\}/{\sim}\ar@{^{(}->}[u] \\
\{\text{regular Yu-data}\}/\text{$\G$-eq.}\ar@{^{(}->}[u] \ar@{->}[r]^-{1:1} & \{\text{regular s.c.\ rep'ns of $\G(F)$}\}/{\sim}\ar@{^{(}->}[u]\\
\{\text{tame elliptic regular pairs}\}/\text{$\G(F)$-conj.}\ar@{<->}[u]^-{1:1}_-{\text{(Prop.\ \ref{prop:reparametrization})}} \ar@{->}[ur]_-{\quad\quad\quad(\bfS,\xi)\mapsto\pi^{\KY}_{(\bfS,\xi)}}&
}
\]

The key point which we have to keep in our mind here is the following connection with Bushnell--Henniart's construction in the case where $\G=\GL_{n}$.
In this case, since every tamely ramified elliptic maximal torus of $\G$ is isomorphic to $\Res_{E/F}\Gm$ for some finite tamely ramified extension $E$ of $F$ of degree $n$, we obtain a pair $(E,\xi)$ of such a field $E$ and a character $\xi$ on $E^{\times}$ from each tame elliptic regular pair $(\bfS,\xi)$ of $\G$.
In fact, the pair $(E,\xi)$ obtained in this way is $F$-admissible and, conversely, every $F$-admissible pair is obtained from a tame elliptic regular pair (see \cite[Lemma 3.7.7]{MR4013740}).
Here note that the proof of \cite[Lemma 3.7.7]{MR4013740} works for any odd prime number $p$ although the condition $p\nmid n$ is imposed in \cite[Lemma 3.7.7]{MR4013740}.
Then the supercuspidal representation $\pi_{(E,\xi)}^{\BH}$ constructed from this pair $(E,\xi)$ in Bushnell--Henniart's way coincides with $\pi_{(\bfS,\xi)}^{\KY}$.
Although this coincidence seems to be well-known to experts, we will explain a proof in Appendix \ref{sec:BH-Kal} for the sake of completeness.

In the following, when a tame elliptic regular pair $(\bfS,\xi)$ of $\GL_{n}$ corresponds to an $F$-admissible pair $(E,\xi)$, we simply write
\[
\pi_{(\bfS,\xi)}
:=\pi_{(\bfS,\xi)}^{\KY}
\cong\pi_{(E,\xi)}^{\BH}.
\]

\begin{rem}
When $p$ does not divide $n$, every irreducible supercuspidal representation of $\GL_{n}(F)$ is essentially tame.
This is almost trivial if we see the original definition of essentially tame supercuspidal representations via the torsion number (see \cite[Section 2]{MR2138141}).
\end{rem}

\subsection{DeBacker--Spice's sign and Kaletha's toral invariant}\label{subsec:epsilon}
In this section, we recall two invariants obtained from Yu-data and used in Kaletha's construction of the local Langlands correspondence for regular supercuspidal representations.
(Also in this section, we can take $\G$ to be a general tamely ramified connected reductive group.)

Let us take a Yu-datum $(\G^{0}\subsetneq\G^{1}\subsetneq\cdots\subsetneq\G^{d},\pi_{-1},(\phi_{0},\ldots,\phi_{d}))$ of $\G$.
Let $\bfS$ be an elliptic maximal torus of $\G^{0}$ obtained by Proposition \ref{prop:reparametrization}.

The first invariant which we need is DeBacker--Spice's sign character $\epsilon^{\sym}\cdot\epsilon_{\sym,\ur}$ introduced in \cite[Section 4.3]{MR3849622}.
This is the product of two characters $\epsilon^{\sym}$ and $\epsilon_{\sym,\ur}$ of $\bfS(F)$.
In order to define these two characters, we first define a character $\epsilon_{\alpha}$ of $\bfS(F)$ for an asymmetric or symmetric unramified root $\alpha$ of $\bfS$ in $\G$.
For a root $\alpha\in\Phi(\bfS,\G)$, there exists a unique index $-1\leq i\leq d-1$ such that $\alpha$ belongs to
\[
\Phi_{i+1}^{i}:=\Phi(\bfS,\G^{i+1})-\Phi(\bfS,\G^{i}).
\]
Here we put $\G^{-1}:=\bfS$.
When $i$ is equal to $-1$, we define $\epsilon_{\alpha}$ to be the trivial character $\mathbbm{1}$ of $\bfS(F)$.
When we have $0\leq i\leq d-1$, we first put $r_{i}$ to be the depth of the character $\phi_{i}$.
On the other hand, for each $\alpha\in\Phi(\bfS,\G)$, we can define a set $\ord_{x}(\alpha)$ of real numbers as in \cite[Definition 3.6]{MR3849622} (we will recall its definition later, see Definition \ref{defn:ord}).
Then the character $\epsilon_{\alpha}\colon\bfS(F)\rightarrow\C^{\times}$ is defined as
\[
\epsilon_{\alpha}(\gamma)
:=
\begin{cases}
\Jac{\overline{\alpha(\gamma)}}{k_{F_{\alpha}}^{\times}} &\text{if $\frac{r_{i}}{2}\in\ord_{x}(\alpha)$,}\\
1&\text{otherwise,}
\end{cases}
\]
when $\alpha$ is asymmetric, and
\[
\epsilon_{\alpha}(\gamma)
:=
\begin{cases}
\Jac{\overline{\alpha(\gamma)}}{k_{F_{\alpha}}^{1}}&\text{if $\frac{r_{i}}{2}\in\ord_{x}(\alpha)$,}\\
1&\text{otherwise,}
\end{cases}
\]
when $\alpha$ is symmetric unramified.
Here, in both cases, $\alpha(\gamma)$ belongs to $\mcO_{F_{\alpha}}^{\times}$ by the ellipticity of $\bfS$.
Thus we may consider its reduction by $\mfp_{F_{\alpha}}$.
We let $\overline{\alpha(\gamma)}$ denote its image in $k_{F_{\alpha}}^{\times}$.
Note that, in the latter case, the definition makes sense since $\alpha(\gamma)$ always belongs to the kernel of the norm map $\Nr_{F_{\alpha}/F_{\pm\alpha}}\colon F_{\alpha}^{\times}\rightarrow F_{\pm\alpha}^{\times}$.
Indeed, if we take the nontrivial element $\tau_{\alpha}$ of the Galois group of $F_{\alpha}/F_{\pm\alpha}$, then we have $\tau_{\alpha}(\alpha)=-\alpha$.
In other words, we have $\tau_{\alpha}(\alpha(\gamma))=\alpha(\tau_{\alpha}(\gamma))^{-1}$ for every $\gamma\in\bfS(F_{\alpha})$.
Thus, if $\gamma\in\bfS(F)$, then we get
\[
\Nr_{F_{\alpha}/F_{\pm\alpha}}\bigl(\alpha(\gamma)\bigr)
=
\alpha(\gamma)\cdot \tau_{\alpha}\bigl(\alpha(\gamma)\bigr)
=
\alpha(\gamma)\cdot \alpha\bigl(\tau_{\alpha}(\gamma)\bigr)^{-1}
=
\alpha(\gamma)\cdot\alpha(\gamma)^{-1}
=1.
\]
We define $\epsilon^{\sym}$ and $\epsilon_{\sym,\ur}$ as products of these $\epsilon_{\alpha}$'s:
\[
\epsilon^{\sym}
:=
\prod_{\alpha\in\Gamma_{F}\times\{\pm1\}\backslash\Phi(\bfS,\G)^{\sym}} \epsilon_{\alpha}
\quad\text{and}\quad
\epsilon_{\sym,\ur}
:=
\prod_{\alpha\in\Gamma_{F}\backslash\Phi(\bfS,\G)_{\sym,\ur}} \epsilon_{\alpha},
\]
where the index set of the first product denotes the set of orbits of asymmetric roots via the action of $\Gamma_{F}\times\{\pm1\}$ (the action of $-1\in\{\pm1\}$ is given by $\alpha\mapsto-\alpha$).

\begin{rem}
In DeBacker--Spice's original paper (\cite[Definition 4.14]{MR3849622}) or Kaletha's paper (\cite[1123 page, (4.3.3)]{MR4013740}), the character $\epsilon^{\sym}\cdot\epsilon_{\sym,\ur}$ is defined as the product of $\epsilon_{\alpha}$'s only for roots $\alpha$ belonging to $\Phi_{d}^{d-1}=\Phi(\bfS,\G^{d})-\Phi(\bfS,\G^{d-1})$.
This is because Adler--DeBacker--Spice's character formula has an inductive nature and there (\cite[Section 4.3]{MR3849622} or \cite[Section 4.3]{MR4013740}) they only describe a sign appearing in a ``reduction step from $\G^{d}$ to $\G^{d-1}$''.
Thus, in a final form of the full character formula (cf.\ \cite[Theorem 7.1]{MR2543925}), the product of $\epsilon_{\alpha}$'s over all roots $\alpha$ in $\Phi(\bfS,\G)$ as above appears.
Also, when this sign character is used in Kaletha's construction of regular supercuspidal $L$-packets (\cite[1154 page, Step 3]{MR4013740}, this will be explained in Section \ref{subsec:LLC-Kal}), the product is taken over all roots (i.e., the sign character $\epsilon^{\sym}\cdot\epsilon_{\sym,\ur}$ defined as in this paper is used).
\end{rem}

The second invariant which we have to consider is the character $\epsilon_{f,\ram}$ defined in \cite[Definition 4.7.3]{MR4013740}.
As explained in \cite[Lemma 4.7.4]{MR4013740}, this is a character of $\bfS(F)$ expressed as the product of ``\textit{toral invariants}'' $f_{(\G,\bfS)}$ for symmetric ramified roots, which are introduced in \cite[Section 4.1]{MR3402796}:
\[
\epsilon_{f,\ram}(\gamma)
=
\prod_{\begin{subarray}{c}\alpha\in\Gamma_{F}\backslash\Phi(\bfS,\G)_{\sym,\ram}\\ \alpha(\gamma)\neq1 \\ \ord(\alpha(\gamma)-1)=0\end{subarray}}
f_{(\G,\bfS)}(\alpha)
\quad
\text{for }
\gamma\in\bfS(F).
\]
In fact, as a special feature of general linear groups, we can show that this invariant is always trivial:

\begin{prop}\label{prop:toral}
When $\G=\GL_{n}$, for every symmetric root $\alpha\in\Phi(\bfS,\G)_{\sym}$, its toral invariant is trivial, i.e., $f_{(\G,\bfS)}(\alpha)=1$.
\end{prop}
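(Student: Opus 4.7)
The plan is to reduce the proposition to a direct matrix calculation using the explicit identification $\bfS\cong\Res_{E/F}\Gm\hookrightarrow\GL_n$ coming from the regular representation of the degree $n$ tamely ramified extension $E/F$ corresponding to $\bfS$. Over $\ol{F}$, the algebra $E\otimes_F\ol{F}$ splits canonically as $\prod_{k=1}^n L_k$ via the embeddings $g_1,\ldots,g_n$, and with respect to the basis $(v_1,\ldots,v_n)$ of $\ol{F}^n$ in which $v_k$ is the primitive idempotent corresponding to $L_k$, the torus $\bfS$ becomes the standard diagonal maximal torus of $\GL_n(\ol{F})$.

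First, I would recall from \cite{MR3402796} the definition of the toral invariant: one fixes a Chevalley pair $(X_\alpha,X_{-\alpha})$ in the root spaces of $\alpha$ and $-\alpha$ with $[X_\alpha,X_{-\alpha}]=H_\alpha$, and for any $\tau\in\Gamma_{\pm\alpha}\setminus\Gamma_\alpha$ one writes $\tau(X_\alpha)=c_\tau X_{-\alpha}$; the toral invariant $f_{(\G,\bfS)}(\alpha)\in\{\pm 1\}$ is then the class of $c_\tau$ in $F_{\pm\alpha}^\times/\Nr_{F_\alpha/F_{\pm\alpha}}(F_\alpha^\times)$. In our setting, for a root $\alpha=\begin{bmatrix}g_i\\g_j\end{bmatrix}=\delta_i-\delta_j$, the natural Chevalley pair in the basis $(v_k)$ is given by the matrix units in positions $(i,j)$ and $(j,i)$, since their commutator is the diagonal matrix with $1$ at position $i$ and $-1$ at position $j$, which is precisely $H_\alpha$.

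The heart of the argument is the Galois action on the $v_k$. For $\tau\in\Gamma_F$ inducing the permutation $\pi$ on $\{1,\ldots,n\}$ defined by $\tau g_k\Gamma_E=g_{\pi(k)}\Gamma_E$, I would show $\tau(v_k)=v_{\pi(k)}$ on the nose: applying $\tau$ to the defining relation $(e\otimes 1)v_k=g_k(e)v_k$ gives $(e\otimes 1)\tau(v_k)=g_{\pi(k)}(e)\tau(v_k)$, so $\tau(v_k)$ is a scalar multiple of $v_{\pi(k)}$, and the idempotency $v_k^2=v_k$ forces the scalar to equal $1$. A standard computation of the Galois action on matrices by conjugation then yields that the matrix unit in position $(i,j)$ is sent to the matrix unit in position $(\pi(i),\pi(j))$. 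When $\alpha$ is symmetric and $\tau\in\Gamma_{\pm\alpha}\setminus\Gamma_\alpha$, the permutation $\pi$ swaps $i$ and $j$, so $\tau(X_\alpha)=X_{-\alpha}$, giving $c_\tau=1$ and hence $f_{(\G,\bfS)}(\alpha)=1$.

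I expect the main obstacle to be matching the precise definition and normalization of the toral invariant in \cite{MR3402796} with the description above, and in particular verifying that the calculation is insensitive to the choice of Chevalley pair and to the choice of representative $\tau\in\Gamma_{\pm\alpha}\setminus\Gamma_\alpha$ (which is guaranteed by the fact that the answer $c_\tau=1$ is completely canonical). Note that the argument does not distinguish between symmetric unramified and symmetric ramified roots, matching the generality of the statement.
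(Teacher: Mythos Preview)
Your proposal is correct and follows essentially the same approach as the paper. Both arguments diagonalize $\bfS$ via the idempotent decomposition of $E\otimes_F\ol{F}$, verify that $\Gamma_F$ permutes the natural basis without scalar factors (the paper phrases this as $\tau(d_i^{\vee}\circ d_j)=d_{\tau(i)}^{\vee}\circ d_{\tau(j)}$, which is your statement that matrix units are permuted), and conclude that $\tau_\alpha(X_\alpha)$ equals the chosen root vector for $-\alpha$ on the nose, so that $[X_\alpha,\tau_\alpha(X_\alpha)]=H_\alpha$ and the toral invariant is trivial.
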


\begin{proof}
We first recall the definition of the toral invariant.
Let $\alpha$ be a symmetric root of $\bfS$ in $\G$.
Then we have the corresponding quadratic extension $F_{\alpha}/F_{\pm\alpha}$.
We take an element $\tau_{\alpha}\in\Gamma_{\pm\alpha}-\Gamma_{\alpha}$.
If we take a nonzero root vector $X_{\alpha}\in\mfg_{\alpha}(F_{\alpha})$ for the root $\alpha$, then $\tau_{\alpha}(X_{\alpha})$ is a root vector for the root $\tau_{\alpha}(\alpha)=-\alpha$.
Thus their Lie bracket product $[X_{\alpha},\tau_{\alpha}(X_{\alpha})]$ gives a nonzero element of the maximal torus $\mfs(F_{\alpha})\subset\mfg(F_{\alpha})$.
On the other hand, the differential $d\alpha^{\vee}$ of the coroot $\alpha^{\vee}$ corresponding to $\alpha$ defines a nonzero element $H_{\alpha}:=d\alpha^{\vee}(1)$ of $\mfs(F_{\alpha})$.
By using these elements, we define the toral invariant $f_{(\G,\bfS)}(\alpha)$ of $\alpha$ by
\[
f_{(\G,\bfS)}(\alpha):=
\kappa_{\alpha}\biggl(\frac{[X_{\alpha},\tau_{\alpha}(X_{\alpha})]}{H_{\alpha}}\biggr),
\]
where $\kappa_{\alpha}$ is the unique quadratic character of $F_{\pm\alpha}^{\times}$ corresponding to the quadratic extension $F_{\alpha}/F_{\pm\alpha}$.
Thus, for our purpose, it suffices to find a root vector $X_{\alpha}$ satisfying $[X_{\alpha},\tau_{\alpha}(X_{\alpha})]=H_{\alpha}$.

Let $E$ be a tamely ramified extension of $F$ of degree $n$ corresponding to the tamely ramified elliptic maximal torus $\bfS$ of $\G$.
By taking an $F$-basis of $E$, we obtain an isomorphism
\[
\mfg(F)
=
\mathfrak{gl}_{n}(F)
\cong
\End_{F}(E).
\]
Then we may assume that $\mfs(F)\subset\mfg(F)$ corresponds to $E$, which is canonically embedded into $\End_{F}(E)$ as multiplication maps, on the right-hand side.
With this identification in our mind, in order to describe the (absolute) root space decomposition of $\mfg$ via $\bfS$, we consider the action of $(E\otimes_{F}\ol{F})^{\times}$ on $\End_{F}(E)\otimes_{F}{\ol{F}}$.

Recall that, in Section \ref{subsec:Tam}, we took a set $\{g_{1},\ldots,g_{n}\}$ of representatives of $\Gamma_{F}/\Gamma_{E}$ and that we have an isomorphism
\[
E\otimes_{F}\ol{F}
\xrightarrow{\cong}
\bigoplus_{i=1}^{n}\ol{F}
\colon\quad
x\otimes y
\mapsto
\bigl(g_{i}(x)y\bigr)_{i}.
\]
Thus the set of characters and cocharacters of $\bfS$ is described as
\[
X^{\ast}(\bfS)=\bigoplus_{i=1}^{n}\Z \delta_{i}
\quad\text{and}\quad
X_{\ast}(\bfS)=\bigoplus_{i=1}^{n}\Z \delta^{\vee}_{i},
\]
where $\delta_{i}$ and $\delta_{i}^{\vee}$ are given by (on $\ol{F}$-valued points)
\begin{align*}
\delta_{i}&\colon \prod_{i=1}^{n}\ol{F}^{\times}\rightarrow\ol{F}^{\times};\quad (x_{i})_{i}\mapsto x_{i},\\
\delta_{i}^{\vee}&\colon \ol{F}^{\times}\rightarrow\prod_{i=1}^{n}\ol{F}^{\times};\quad x\mapsto (1,\ldots,1,\underbrace{x}_{i},1,\ldots,1),
\end{align*}
respectively, and the set of roots of $\bfS$ in $\mfg$ is given by $\{\delta_{i}-\delta_{j}\mid 1\leq i\neq j\leq n\}$.
We put $d_{i}$ and $d_{i}^{\vee}$ to be their differentials, i.e., 
\begin{align*}
d_{i}&\colon \bigoplus_{i=1}^{n}\ol{F}\rightarrow\ol{F};\quad (x_{i})_{i}\mapsto x_{i},\\
d^{\vee}_{i}&\colon \ol{F}\rightarrow\bigoplus_{i=1}^{n}\ol{F};\quad x\mapsto (0,\ldots,0,\underbrace{x}_{i},0,\ldots,0).
\end{align*}
Then we can check that $d^{\vee}_{i}\circ d_{j}$ is a root vector for the root $\delta_{i}-\delta_{j}$.

Let us assume that our symmetric root $\alpha$ is given by $\delta_{i}-\delta_{j}$ and show that $X_{\alpha}:=d^{\vee}_{i}\circ d_{j}$ is a desired root vector.
First, this element $X_{\alpha}\in\mfg_{\alpha}(\ol{F})$ belongs to $\mfg_{\alpha}(F_{\alpha})$.
Indeed, in order to check this, it suffices to show that $\tau(X_{\alpha})=X_{\alpha}$ for every $\tau\in\Gamma_{\alpha}$.
By considering how the $\Gamma_{F}$-action on $E\otimes_{F}\ol{F}$ is converted on $\oplus_{i=1}^{n}\ol{F}$, we can easily check that the $\Gamma_{F}$-action on characters and cocharacters is given by $\tau(\delta_{i})=\delta_{\tau(i)}$ (hence $\tau(d_{i})=d_{\tau(i)}$) and $\tau(\delta^{\vee}_{i})=\delta^{\vee}_{\tau(i)}$ (hence $\tau(d^{\vee}_{i})= d^{\vee}_{\tau(i)}$).
Here $\tau(i)$ denotes the unique index satisfying $g_{\tau(i)}\Gamma_{E}=(\tau\circ g_{i})\Gamma_{E}$.
Thus, for $\tau\in\Gamma_{F}$, $\tau$ belongs to $\Gamma_{\alpha}=\Stab_{\Gamma_{F}}(\delta_{i}-\delta_{j})$ if and only if we have $\tau(i)=i$ and $\tau(j)=j$.
On the other hand, we can also check that $\tau(d^{\vee}_{i}\circ d_{j})=d^{\vee}_{\tau(i)}\circ d_{\tau(j)}$ easily.
Thus $\tau\in\Gamma_{\alpha}$ implies that $\tau(X_{\alpha})=X_{\alpha}$.

Now let us show that $[X_{\alpha}, \tau_{\alpha}(X_{\alpha})]=H_{\alpha}$.
By noting that $\mfs(\ol{F})$ is embedded into $\End_{F}(E)\otimes_{F}\ol{F}$ as
\begin{align*}
\mfs(\ol{F})
\cong
\bigoplus_{i=1}^{n}\ol{F}
&\hookrightarrow
\End_{\ol{F}}\Bigl(\bigoplus_{i=1}^{n}\ol{F}\Bigr)
\cong
\End_{F}(E)\otimes_{F}\ol{F}\\
(x_{i})_{i}
&\mapsto
\sum_{i=1}^{n} x_{i}\cdot(d^{\vee}_{i}\circ d_{i}),
\end{align*}
we get $H_{\alpha}=d_{i}^{\vee}\circ d_{i}-d_{j}^{\vee}\circ d_{j}$.
On the other hand, as we have 
\[
\tau_{\alpha}(\alpha)
=\delta_{\tau_{\alpha}(i)}-\delta_{\tau_{\alpha}(j)}
=-\alpha
=\delta_{j}-\delta_{i},
\]
we get $\tau_{\alpha}(i)=j$ and $\tau_{\alpha}(j)=i$.
Hence we have
\[
\tau_{\alpha}(X_{\alpha})
=
d^{\vee}_{\tau_{\alpha}(i)}\circ d_{\tau_{\alpha}(j)}
=d^{\vee}_{j}\circ d_{i}.
\]
By noting that $d_{i}\circ d^{\vee}_{i}=\id$ for any $i$, we get
\begin{align*}
[X_{\alpha},\tau_{\alpha}(X_{\alpha})]
&=
X_{\alpha}\circ\tau_{\alpha}(X_{\alpha})-\tau_{\alpha}(X_{\alpha})\circ X_{\alpha}\\
&=
d^{\vee}_{i}\circ d_{j}\circ d^{\vee}_{j}\circ d_{i}-d^{\vee}_{j}\circ d_{i}\circ d^{\vee}_{i}\circ d_{j}\\
&=
d^{\vee}_{i}\circ d_{i}-d^{\vee}_{j}\circ d_{j}.
\end{align*}
This completes the proof.
\end{proof}

\subsection{Kaletha's Construction of $L$-packets and $L$-parameters}\label{subsec:LLC-Kal}
In \cite{{MR4013740}}, Kaletha attached an $L$-packet and an $L$-parameter to each ``regular supercuspidal $L$-packet datum'' of $\G$.
Let us recall his construction briefly.
Here we suppose that $\G$ is a quasi-split tamely ramified connected reductive group temporarily (later we will focus on the case where $\G=\GL_{n}$).

First recall that a \textit{regular supercuspidal $L$-packet datum of $\G$} is a tuple $(\bfS,\widehat{j},\chi,\xi)$ consisting of 
\begin{itemize}
\item
a tamely ramified torus $\bfS$ over $F$ whose absolute rank is the same as that of $\G$,
\item
an embedding $\widehat{j}\colon\widehat{\bfS}\hookrightarrow\widehat{\G}$ whose $\widehat{\G}$-conjugacy class is $\Gamma_{F}$-stable,
\item
a set $\chi$ of $\chi$-data for $(\bfS,\G)$, and
\item
a character $\xi\colon\bfS(F)\rightarrow\C^{\times}$
\end{itemize}
satisfying several conditions (see \cite[Definition 5.2.4]{MR4013740}).
Here the meaning of ``a set of $\chi$-data for $(\bfS,\G)$'' is as follows (see \cite[Section 5.1]{MR4013740} for the details):
First, by considering the converse of the procedure in Section \ref{subsec:LS}, we can get a $\Gamma_{F}$-stable $\G(\overline{F})$-conjugacy class $J$ of embeddings of $\bfS$ into $\G$.
More precisely, by taking a conjugation, we may assume that the image $\widehat{j}(\widehat{\bfS})$ of $\widehat{j}$ in $\widehat{\G}$ is equal to $\mathcal{T}$, which is the maximal torus belonging to the fixed splitting $\mathbf{spl}_{\widehat{\G}}$ for $\widehat{\G}$.
Then, by taking the dual, we get an isomorphism between $\bfS$ and $\bfT$, which is the maximal torus belonging to $\mathbf{spl}_{\G}$, over $\overline{F}$.
Hence we get an embedding of $\bfS$ into $\G$ (which is over $\overline{F}$) well-defined up to $\G(\overline{F})$-conjugation.
In other words, we get a $\G(\overline{F})$-conjugacy class of embeddings of $\bfS$ into $\G$, for which we write $J$ (see \cite[Section 5.1]{MR4013740} for an explanation on the $\Gamma_{F}$-stability of $J$).
Then, as explained in \cite[Section 5.1]{MR4013740}, the class $J$ has a $\Gamma_{F}$-fixed element, i.e., an embedding of $\bfS$ into $\G$ defined over $F$ because of the quasi-splitness of $\G$ (\cite[Corollary 2.2]{MR683003}).
By taking such an embedding $j\colon\bfS\hookrightarrow\G$ defined over $F$, $j(\bfS)$ gives a maximal torus of $\G$ defined over $F$.
In particular, by pulling back the set $\Phi(j(\bfS),\G)$ via $j$, we get a subset $\Phi(\bfS,\G)$ of $X^{\ast}(\bfS)$ which is $\Gamma_{F}$-stable.
Since this set with $\Gamma_{F}$-action does not depend on the choice of $j$, we can define the notion of a set of $\chi$-data in the usual way (see Section \ref{subsec:LS}).
We note that, in the definition of a regular supercuspidal $L$-packet datum, an ellipticity condition on $\bfS$ is also imposed (i.e., if we regard $\bfS$ as a subtorus of $\G$ by using $j$ as above, then it should be elliptic).
We also note that we can extend $\widehat{j}$ to an $L$-embedding ${}^{L}\!j_{\chi}\colon{}^{L}\bfS\hookrightarrow{}^{L}\G$ by using a set of $\chi$-data $\chi$ in the same manner as in Section \ref{subsec:LS}.

Next let us recall the construction of $L$-packets and $L$-parameters.
We take a regular supercuspidal $L$-packet datum $(\bfS,\widehat{j},\chi,\xi)$ of $\G$.
First, we obtain an $L$-parameter from this regular supercuspidal $L$-packet datum $(\bfS,\widehat{j},\chi,\xi)$ just by using the local Langlands correspondence for tori:

\begin{description}
\item[Construction of $L$-parameters ({\cite[Proof of Proposition 5.2.7]{MR4013740}})]
By applying the local Langlands correspondence for $\bfS$ to $\xi$, we get an $L$-parameter $\phi_{\xi}$ of $\bfS$, which is a homomorphism from $W_{F}$ to ${}^{L}\bfS$.
On the other hand, by the Langlands--Shelstad construction, we can extend $\widehat{j}$ to an $L$-embedding ${}^{L}\!j_{\chi}$ from ${}^{L}\bfS$ to ${}^{L}\G$ by using the set $\chi$ of $\chi$-data.
Thus, by composing these two homomorphisms, we get an $L$-parameter $\phi$ of $\G$:
\[
\phi
\colon
W_{F}
\xrightarrow{\phi_{\xi}}
{}^{L}\bfS
\xrightarrow{{}^{L}\!j_{\chi}}
{}^{L}\G.
\]
\end{description}
Next we recall the construction of the $L$-packet attached to $(\bfS,\widehat{j},\chi,\xi)$:

\begin{description}
\item[Parametrization of members in each $L$-packet ({\cite[1154 page]{MR4013740}})]
As explained in the beginning of this section, from $\widehat{j}$, we get a $\Gamma_{F}$-stable $\G(\overline{F})$-conjugacy class $J$ of embeddings of $\bfS$ into $\G$.
We put $\mathcal{J}$ to be the set of $\G(F)$-conjugacy classes of embeddings which belong to $J$ and defined over $F$.
Then members of the $L$-packet $\Pi_{\phi}$ for the $L$-parameter $\phi$ are parametrized by the elements of $\mathcal{J}$.
\item[Construction of members ({\cite[1153-1154 page]{MR4013740})}]
For each $j\in\mathcal{J}$, we call a tuple $(\bfS,\widehat{j},\chi,\xi,j)$ a \textit{regular supercuspidal datum} of $\G$ (mapping to the regular supercuspidal $L$-packet datum $(\bfS,\widehat{j},\chi,\xi)$).
For each regular supercuspidal datum $(\bfS,\widehat{j},\chi,\xi,j)$, the pair
\[
\bigl(j(\bfS),\xi_{j}\bigr)
:=
\bigl(
j(\bfS),
\epsilon\cdot(\xi\cdot\zeta_{\bfS}^{-1})\circ j^{-1}
\bigr)
\] 
is a ($\G(F)$-conjugacy class of) tame elliptic regular pair of $\G$.
Here 
\[
\epsilon
:=
\epsilon^{\sym}\cdot\epsilon_{\sym,\ur}\cdot\epsilon_{f,\ram}
\]
and $\zeta_{\bfS}$ is the character determined by the ``$\zeta$-data'' measuring the difference between $\chi$ and ``Kaletha's $\chi$-data'' $\chi_{\Kal}$.
Kaletha's $\chi$-data $\chi_{\Kal}$ is defined in \cite[the paragraph before Definition 4.7.3]{MR4013740} (note that in \cite{MR4013740} $\chi_{\Kal}$ is written as $\chi'$). 
Later we will recall its definition (Sections \ref{subsec:asymm}, \ref{subsec:symmur}, and \ref{subsec:symmram}).
On the other hand, we do not recall the definition of the character $\zeta_{\bfS}$, but point out that it is trivial when $\chi$ is equal to $\chi_{\Kal}$ (see Step 2 and Definition 4.6.5 in \cite{MR4013740} for details).
We put 
\[
\Pi_{\phi}
:=
\{\pi_{(j(\bfS),\xi_{j})}^{\KY}\mid j\in\mathcal{J}\}.
\]
\end{description}

\begin{rem}
In \cite{MR4013740}, $L$-packets are constructed according to his formalism of ``rigid inner forms'', which was established in \cite{MR3548533}, and compounded in the sense that they consist not only of representations of $\G(F)$ but also those of all rigid inner forms of $\G$.
Thus, strictly speaking, we also have to add a datum of a rigid inner twist to the above explanation of the definition of a regular supercuspidal datum (see \cite[Definition 5.3.2]{MR4013740}).
However, since we treat the local Langlands correspondence only in the case of $\G$ itself in this paper, we always take a rigid inner twist $(\G',\psi,z)$ in each regular supercuspidal datum $(\bfS,\widehat{j},\chi,\xi,(\G',\psi,z),j)$ to be the trivial twist $(\G,\id,1)$, and write shortly $(\bfS,\widehat{j},\chi,\xi,j)$ for $(\bfS,\widehat{j},\chi,\xi,(\G,\id,1),j)$.
\end{rem}

Here we note that the above map
\[
(\bfS,\widehat{j},\chi,\xi,j)
\mapsto
\bigl(j(\bfS),\xi_{j}\bigr)
:=
\bigl(
j(\bfS),
\epsilon\cdot(\xi\cdot\zeta_{\bfS}^{-1})\circ j^{-1}
\bigr)
\]
gives a bijection from the set of isomorphism classes of regular supercuspidal data of $\G$ to the set of $\G(F)$-conjugacy classes of tame elliptic regular pairs of $\G$.
Indeed, the well-definedness of this map is explained (implicitly) in Steps 2 and 3 in \cite[Section 5.3]{MR4013740}.
Moreover we can check that the inverse map is given by 
\[
(\bfS,\xi)
\mapsto
(\bfS,\widehat{j},\chi_{\Kal},\epsilon^{-1}\cdot\xi, j),
\]
where $j$ is the inclusion of $\bfS$ into $\G$ and $\widehat{j}$ is the embedding constructed in the manner as explained in Section \ref{subsec:LS}.
As a consequence, we can conclude that Kaletha's $L$-packets are disjoint from each other and exhaust all regular supercuspidal representations of $\G(F)$.

Now let us focus on the case where $\G=\GL_{n}$.
An important observation coming from the speciality of the group $\GL_{n}$ is that every $L$-packet constructed in this way is a singleton.
Indeed, as already mentioned before, for every regular supercuspidal $L$-packet datum $(\bfS,\widehat{j},\chi,\xi)$, the set $\mathcal{J}$ is nonempty because of the (quasi-)splitness of $\G$.
Moreover, in general, if we take an element $j$ of $\mathcal{J}$, then we can check easily that $\mathcal{J}$ is parametrized by the set
\[
\Ker\bigl(H^{1}(F,\bfS)\xrightarrow{j} H^{1}(F,\G)\bigr),
\]
where the map between two cohomology groups is the one induced from $j$.
As $\bfS$ is an induced torus when $\G$ is $\GL_{n}$, this set is always trivial by Shapiro's lemma and Hilbert's theorem 90.
Thus $\mathcal{J}$ is a singleton, hence so is the corresponding $L$-packet.

In conclusion, Kaletha's construction of the local Langlands correspondence for regular supercuspidal representations of $\G(F)=\GL_{n}(F)$ is summarized as follows:

\begin{prop}\label{prop:Kaletha}
We take a tame elliptic regular pair $(\bfS,\xi)$ of $\G$ and consider the regular supercuspidal representation $\pi_{(\bfS,\xi)}$ of $\G(F)$ arising from this pair $(\bfS,\xi)$.
Let $E$ be a tamely ramified extension of $F$ of degree $n$ corresponding to $\bfS$.
We write $\LLC_{\G}^{\Kal}(\pi_{(\bfS,\xi)})$ for the $L$-parameter corresponding to $\pi_{(\bfS,\xi)}$ by Kaletha's construction.
Then we have
\[
\LLC_{\G}^{\Kal}(\pi_{(\bfS,\xi)})
=
\Ind_{W_{E}}^{W_{F}}(\epsilon^{-1}\xi\mu_{\chi_{\Kal}}),
\]
where $\mu_{\chi_{\Kal}}$ is the character of $E^{\times}$ determined by the set $\chi_{\Kal}$ of Kaletha's $\chi$-data according to the recipe of Proposition \ref{prop:Tam}.
\end{prop}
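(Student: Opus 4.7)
The plan is to combine Kaletha's construction as summarized in Section \ref{subsec:LLC-Kal} with Tam's explicit description of Langlands--Shelstad $L$-embeddings (Proposition \ref{prop:Tam}). There is essentially no new input beyond the results already laid out in the excerpt; the content of the proposition is the compatibility of these two pieces.

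First, I would invert the bijection
\[
(\bfS,\widehat{j},\chi,\xi,j) \mapsto \bigl(j(\bfS),\, \epsilon\cdot(\xi\cdot\zeta_{\bfS}^{-1})\circ j^{-1}\bigr)
\]
to identify which regular supercuspidal datum gives the tame elliptic regular pair $(\bfS,\xi)$. Taking $j\colon\bfS\hookrightarrow\G$ to be the inclusion and the $\chi$-data to be $\chi_{\Kal}$ (so that $\zeta_{\bfS}$ is trivial by the construction of $\zeta$-data recalled in Section \ref{subsec:LLC-Kal}), the preimage is the datum $(\bfS,\widehat{j},\chi_{\Kal},\epsilon^{-1}\xi,j)$, where $\widehat{j}$ is the embedding dual to $j$ in the sense of Section \ref{subsec:LS}. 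Here $\epsilon = \epsilon^{\sym}\cdot\epsilon_{\sym,\ur}\cdot\epsilon_{f,\ram}$, and by Proposition \ref{prop:toral} the factor $\epsilon_{f,\ram}$ is trivial in the $\GL_{n}$-setting, although this simplification is not required for the argument itself.

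By Kaletha's definition, the $L$-parameter attached to $(\bfS,\widehat{j},\chi_{\Kal},\epsilon^{-1}\xi,j)$ is the composition
\[
\phi\colon W_{F} \xrightarrow{\phi_{\epsilon^{-1}\xi}} {}^{L}\bfS \xrightarrow{{}^{L}\!j_{\chi_{\Kal}}} {}^{L}\G,
\]
where $\phi_{\epsilon^{-1}\xi}$ is the $L$-parameter of $\bfS$ associated to the character $\epsilon^{-1}\xi$ of $\bfS(F)\cong E^{\times}$ via the local Langlands correspondence for tori. Applying Proposition \ref{prop:Tam} to the pair $(E,\epsilon^{-1}\xi)$ together with the $\chi$-data $\chi_{\Kal}$ then yields
\[
{}^{L}\!j_{\chi_{\Kal}}\circ\phi_{\epsilon^{-1}\xi}
\cong
\Ind_{W_{E}}^{W_{F}}\bigl((\epsilon^{-1}\xi)\cdot\mu_{\chi_{\Kal}}\bigr)
=
\Ind_{W_{E}}^{W_{F}}(\epsilon^{-1}\xi\mu_{\chi_{\Kal}})
\]
as an $n$-dimensional representation of $W_{F}$, which is the desired formula.

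In this argument there is no genuinely hard step: the proposition is essentially a direct readout of the construction, together with the fact that regular supercuspidal $L$-packets of $\GL_{n}$ are singletons (which has already been observed before the statement, via Shapiro's lemma and Hilbert 90). The substantive work needed to deduce the main theorem, namely the identification of the ratio $\mu_{\chi_{\Kal}}\cdot\mu_{\chi_{\Tam}}$ with $\epsilon$, is deferred to Section \ref{sec:main} and is where the root-theoretic case analysis enters.
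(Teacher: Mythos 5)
Your proof is correct and follows exactly the same route as the paper: identify the preimage datum $(\bfS,\widehat{j},\chi_{\Kal},\epsilon^{-1}\xi,j)$ under the bijection (using that $j$ is the inclusion and $\zeta_{\bfS}$ is trivial for $\chi=\chi_{\Kal}$), note that the associated $L$-parameter is ${}^{L}\!j_{\chi_{\Kal}}\circ\phi_{\epsilon^{-1}\xi}$, and conclude by Proposition \ref{prop:Tam}. Your exposition spells out the inversion of the bijection and the singleton nature of the packet a bit more explicitly than the paper does, but the argument is the same.
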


\begin{proof}
Let $(\bfS,\xi)$ be a pair as in the statement.
Then, as explained in this section, the $L$-packet containing $\pi_{(\bfS,\xi)}$ is a singleton and arises from the regular supercuspidal $L$-packet datum $(\bfS,\widehat{j},\chi_{\Kal},\epsilon^{-1}\xi)$.
Thus the corresponding $L$-parameter is given by ${}^{L}\!j_{\chi_{\Kal}}\circ\phi_{\epsilon^{-1}\xi}$.
If we apply Proposition \ref{prop:Tam} to this $L$-parameter, we obtain the induced representation as desired.
\end{proof}

\section{Prerequisites for a comparison of $\chi$-data}\label{sec:pre}

\subsection{Classification of roots of elliptic tori of $\GL_{n}$}\label{subsec:root}
Let $E/F$ be a tamely ramified extension of degree $n$.
We simply write $e$ (resp.\ $f$) for the ramification index $e(E/F)$ (resp.\ residue degree $f(E/F)$).
We take an elliptic maximal torus $\bfS$ in $\G$ which is isomorphic to $\Res_{E/F}\Gm$.
In this section we classify the symmetric unramified roots and ramified roots in $\Phi(\bfS,\G)$.
Especially, we will determine the relation between symmetric unramified/ramified roots in the sense of Adler--DeBacker--Spice, which we adopt in this paper, and those in the sense of Tam.
We note that Tam's definition of the symmetricity of roots coincides with our definition.
However, his definition of unramifiedness and ramifiedness for symmetric roots is different from ours.
See \cite[1710 page]{MR3509939} for the definition of symmetric unramifiedness and ramifiedness of Tam.

We first recall an explicit choice of a set of representatives of 
$\Gamma_F/\Gamma_E$, following \cite[Section 3.2]{MR3509939}.
We take uniformizers $\varpi_{E}$ and $\varpi_{F}$ of $E$ and $F$, respectively,
so that
\[
\varpi_{E}^e=\zeta _{E/F}\varpi_{F}
\]
for some $\zeta_{E/F} \in \mu_{E}$.
We fix a primitive $e$-th root $\zeta_{e}$ of unity 
and an $e$-th root $\zeta_{E/F, e}$ of $\zeta_{E/F}$,
and put $\zeta_{\phi}:=\zeta_{E/F, e}^{q-1}$.
Then $L:=E[\zeta_{e}, \zeta_{E/F, e}]$ is a tamely ramified extension of $F$
which contains the Galois closure of $E/F$ and is unramified over $E$.
The Galois group $\Gamma_{L/F}$ of the extension $L/F$ is given by the semi-direct product
$\langle \sigma \rangle \rtimes \langle \phi \rangle$, where
\begin{align*}
&\sigma \colon \zeta \mapsto \zeta \quad (\zeta \in \mu_{L}), \quad \varpi_{E} \mapsto \zeta_e \varpi_{E} \\
&\phi \colon \zeta \mapsto \zeta ^q \quad (\zeta \in \mu_{L}), \quad \varpi_{E} \mapsto \zeta_{\phi}\varpi_{E}
\end{align*}
and
$\phi \sigma \phi ^{-1}=\sigma ^q$.
Moreover, as explained in \cite[Proposition 3.3 (i)]{MR3509939}, we can take a set of representatives of $\Gamma_{F}/\Gamma_{E}$ to be
\[
\{\Gamma_{F}/\Gamma_{E}\}
:=
\{
\sigma^{k}\phi^{i}
\mid
0\leq k \leq e-1,\, 0\leq i \leq f-1
\}.
\]
Here we implicitly regard each $\sigma^{k}\phi^{i}\in\Gamma_{L/F}$ as an element of $\Gamma_{F}$ by taking its extension to $\overline{F}$ from $L$.
In the rest of this paper, we always adopt this set of representatives and use the notations defined in Section \ref{subsec:Tam}.
We note that, as $L/E$ is unramified, there exists an integer $c$ such that 
$\Gamma_{L/E}=\langle \sigma^{c} \phi^{f}\rangle$.

\[
\xymatrix{
&L=F[\varpi_{E},\mu_{L}]&\\
E\ar@{-}[ur]^-{\text{unramified with Galois group $\langle\sigma^{c}\phi^{f}\rangle$}\quad\quad\quad}&&F[\mu_{L}]\ar@{-}[ul]_-{\quad\quad\quad\quad\text{totally ramified with Galois group $\langle\sigma\rangle$}}\\
&F\ar@{-}[ul]^-{\text{tamely ramified}\quad}\ar@{-}[ur]_-{\quad\quad\quad\text{unramified with Galois group $\langle\phi\rangle$}}&
}
\]

The following criterion of the symmetricity of roots is a slight enhancement of \cite[Proposition 3.2]{MR3509939}:

\begin{prop} \label{prop:Tam3.2}
Let $\alpha\in\Phi(\bfS,\G)$ be a root of the form $\begin{bmatrix}1\\g\end{bmatrix}$ for some $g\in\{\Gamma_{F}/\Gamma_{E}\}$.
	\begin{enumerate}
		\item \label{item:Eg} We have $\Gamma_{\alpha}=\Gamma_{E} \cap g\Gamma_{E}g^{-1}$.
		\item	\label{item:xg} The root $\alpha$ is symmetric if and only if there exists $x_g\in \Gamma_{E}$ such that $gx_gg\in \Gamma_{E}$. 
		Moreover, in this case
		we have $(gx_g)^2\in \Gamma_{\alpha}$ and $\Gamma_{\pm\alpha}=\langle gx_g, \Gamma_{\alpha}	\rangle$.
\end{enumerate}
\end{prop}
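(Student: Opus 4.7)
The plan is to unwind the $\Gamma_F$-action on $\Phi(\bfS,\G)$ in terms of the coset description of $\Gamma_F/\Gamma_E$ and reduce both assertions to elementary bookkeeping with the representatives $g_1,\ldots,g_n$.

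The starting point, already used in the proof of Proposition~\ref{prop:toral}, is that $\Gamma_F$ acts on the $\Z$-basis $\{\delta_i\}$ of $X^{\ast}(\bfS)$ by index permutation: for $\tau\in\Gamma_F$, $\tau(\delta_i)=\delta_{\tau(i)}$, where $\tau(i)$ is the unique index such that $g_{\tau(i)}\Gamma_E=\tau g_i\Gamma_E$. Writing $\alpha=\begin{bmatrix}1\\g\end{bmatrix}=\delta_{i_1}-\delta_{i_g}$ with $g_{i_1}=1$ and $g_{i_g}=g$, one has $\tau(\alpha)=\delta_{\tau(i_1)}-\delta_{\tau(i_g)}$; since an equality $\delta_a-\delta_b=\pm(\delta_c-\delta_d)$ between two nonzero differences of distinct basis vectors forces $(a,b)$ to equal $(c,d)$ or $(d,c)$ according to the sign, the entire analysis reduces to tracking when $\tau$ fixes or swaps the two indices $i_1$ and $i_g$.

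For (1), $\tau\in\Gamma_\alpha$ is equivalent to $\tau(i_1)=i_1$ and $\tau(i_g)=i_g$. The first condition translates to $\tau\Gamma_E=\Gamma_E$, i.e.\ $\tau\in\Gamma_E$, and the second to $\tau g\Gamma_E=g\Gamma_E$, i.e.\ $\tau\in g\Gamma_E g^{-1}$; intersecting yields the stated description of $\Gamma_\alpha$. For (2), symmetricity of $\alpha$ amounts to the existence of $\tau\in\Gamma_F$ swapping the two indices. The condition $\tau(i_1)=i_g$ rewrites as $\tau\in g\Gamma_E$, so I would write $\tau=gx_g$ with $x_g\in\Gamma_E$; the remaining condition $\tau(i_g)=i_1$ then becomes $gx_g\cdot g\in\Gamma_E$, giving the asserted criterion. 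Once this $\tau$ is available, $\tau^{2}(\alpha)=\alpha$ forces $(gx_g)^{2}\in\Gamma_\alpha$, and since $\Gamma_{\pm\alpha}/\Gamma_\alpha$ has order at most two (it embeds into the symmetric group of $\{\pm\alpha\}$) while $gx_g$ represents its nontrivial coset, we conclude $\Gamma_{\pm\alpha}=\langle gx_g,\Gamma_\alpha\rangle$.

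I do not anticipate any substantive obstacle here: the entire argument is a dictionary between the permutation description of the $\Gamma_F$-action on $\{\delta_i\}$ and the coset description of $\Gamma_F/\Gamma_E$. The only point meriting care is to confirm the convention $\tau(\delta_i)=\delta_{\tau(i)}$ (rather than $\delta_{\tau^{-1}(i)}$), but this is precisely the convention recorded in the discussion preceding Proposition~\ref{prop:toral}, so no additional setup is needed.
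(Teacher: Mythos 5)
Your proof is correct and follows essentially the same strategy as the paper's: identify a root with its pair of indices (equivalently, pair of cosets in $\Gamma_F/\Gamma_E$), observe that $\Gamma_F$ acts by left translation, and read off the stabilizers. The only cosmetic differences are that the paper phrases the symmetricity criterion via the double-coset identity $\Gamma_E g\Gamma_E=\Gamma_E g^{-1}\Gamma_E$ before unwinding it, whereas you extract $\tau=gx_g$ directly from $\tau(i_1)=i_g$; and that you deduce $(gx_g)^2\in\Gamma_\alpha$ from $\tau^2(\alpha)=\alpha$ rather than by the paper's explicit rewriting $(gx_g)^2=(gx_gg)x_g=gx_g(gx_gg)g^{-1}$—both are valid, and yours is marginally slicker.
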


\begin{proof}
	As in the proof of \cite[Proposition 3.1]{MR3509939},
	the set $\Phi(\bfS, \G)$ is identified with the set of off-diagonal elements in $\Gamma_{F}/\Gamma_{E} \times \Gamma_{F}/\Gamma_{E}$
	by the map $\begin{bmatrix}	g_1 \\ g_2\end{bmatrix} \mapsto (g_1\Gamma_{E}, g_2\Gamma_{E})$ and
	the induced $\Gamma_{F}$-action on the latter set is described as $g(g_1\Gamma_{E}, g_2\Gamma_{E})=(gg_1\Gamma_{E}, gg_2\Gamma_{E})$.
	The assertion \eqref{item:Eg} follows immediately from this.
	
	Let us prove the assertion \eqref{item:xg}.
	The same description shows that the $\Gamma_{F}$-orbit of $-\alpha$ contains $\alpha'=\begin{bmatrix}	1 \\ g^{-1}	\end{bmatrix}$.
	Therefore, $\alpha$ is symmetric if and only if $\Gamma_{E}g\Gamma_{E}=\Gamma_{E}g^{-1}\Gamma_{E}$ holds.
	This latter condition amounts to the existence of $x_g, y_g\in \Gamma_{E}$ such that $y_ggx_g=g^{-1}$,
	or equivalently, the existence of $x_g\in \Gamma_{E}$ such that 
	$gx_gg\in \Gamma_{E}$.
Moreover, for such a choice of $x_g\in \Gamma_{E}$, we have 
\[
(gx_g)^2=(gx_gg)x_g\in \Gamma_{E}
\quad
\text{and}
\quad
(gx_g)^2=gx_g(gx_gg)g^{-1}\in g\Gamma_{E}g^{-1},
\]
	which shows $(gx_g)^2\in \Gamma_{\alpha}$ by \eqref{item:Eg}.
	Now again by the description of $\Gamma_{F}$-action above
	we see that $gx_g$ sends $\alpha$ to $-\alpha$.
	Hence $\Gamma_{\pm \alpha}=\langle gx_g, \Gamma_{\alpha}\rangle$ as required.
\end{proof}

We next give a sufficient condition for a given symmetric root to be unramified.

\begin{lem} \label{lem:symm}
Let $\alpha\in\Phi(\bfS,\G)$ be a root of the form $\begin{bmatrix}1\\g\end{bmatrix}$ for some $g\in\{\Gamma_{F}/\Gamma_{E}\}$.
Moreover we assume that $\alpha$ is symmetric.
		\begin{enumerate}
		\item \label{item:compfield} Suppose that $g\notin \Gamma_{K}$ for a subfield $F\subset K\subset E$.
		Then we have $F_{\alpha}=F_{\pm\alpha}\cdot K$.
		\item \label{item:unramroot} Let $K=F[\mu_{E}]$ be the maximal unramified extension of $F$ in $E$.
		If $g\notin \Gamma_{K}$ then $g$ is symmetric unramified.
	\end{enumerate}
\end{lem}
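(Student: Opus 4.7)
The plan is to extract everything from Proposition \ref{prop:Tam3.2} and simple Galois-theoretic bookkeeping.

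First, for part \eqref{item:compfield}, I would observe that Proposition \ref{prop:Tam3.2}\eqref{item:Eg} gives $\Gamma_\alpha=\Gamma_E\cap g\Gamma_Eg^{-1}=\Gamma_E\cap\Gamma_{g(E)}$, so that $F_\alpha=E\cdot g(E)$. Since $\alpha$ is symmetric, Proposition \ref{prop:Tam3.2}\eqref{item:xg} gives $\Gamma_{\pm\alpha}=\langle gx_g,\Gamma_\alpha\rangle$ for some $x_g\in\Gamma_E$ with $gx_gg\in\Gamma_E$. With these two descriptions in hand, it suffices to show that $K\subset F_\alpha$ but $K\not\subset F_{\pm\alpha}$, because $[F_\alpha:F_{\pm\alpha}]=2$ will then force $F_\alpha=F_{\pm\alpha}\cdot K$.

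Next I would verify these two inclusions separately. The inclusion $K\subset F_\alpha$ is immediate: $K\subset E$ gives $\Gamma_K\supset\Gamma_E\supset\Gamma_\alpha$. For $K\not\subset F_{\pm\alpha}$, I would need to produce an element of $\Gamma_{\pm\alpha}$ that does not fix $K$; the natural candidate is $gx_g$. Since $x_g\in\Gamma_E\subset\Gamma_K$, the element $x_g$ fixes $K$, so $gx_g|_K=g|_K$, and the hypothesis $g\notin\Gamma_K$ gives $gx_g\notin\Gamma_K$. This proves \eqref{item:compfield}.

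For part \eqref{item:unramroot}, I would simply apply \eqref{item:compfield} with $K=F[\mu_E]$ to get $F_\alpha=F_{\pm\alpha}\cdot K$. Because $K/F$ is unramified, the base-changed extension $F_{\pm\alpha}\cdot K/F_{\pm\alpha}$ is likewise unramified; hence the quadratic extension $F_\alpha/F_{\pm\alpha}$ is unramified, i.e.\ $\alpha\in\Phi(\bfS,\G)_{\sym,\ur}$.

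There is no real obstacle here; the only point requiring a moment's care is the verification that $gx_g$ acts on $K$ in the same way as $g$ (which uses $K\subset E$), so that the hypothesis $g\notin\Gamma_K$ propagates to $\Gamma_{\pm\alpha}\not\subset\Gamma_K$.
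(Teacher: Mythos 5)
Your proof is correct and follows essentially the same route as the paper's: one inclusion $K\subset F_\alpha$ comes from $K\subset E\subset F_\alpha$, the non-inclusion $K\not\subset F_{\pm\alpha}$ comes from observing that $gx_g$ agrees with $g$ on $K$ and hence does not fix $K$, and the quadratic degree $[F_\alpha:F_{\pm\alpha}]=2$ finishes. Part \eqref{item:unramroot} is also argued identically.
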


\begin{proof}
	Let us prove \eqref{item:compfield}.
	Since $F_{\alpha}\supset E\supset K$, 
	we clearly have $F_{\alpha}\supset F_{\pm\alpha}\cdot K$.	
	Let $x_g\in \Gamma_{E}$ be as in Proposition \ref{prop:Tam3.2}.
	As $g$ does not fix $K$ while $x_g$ does,
	the product $gx_g$ does not fix $K$ either.
	That is, $K$ is not contained in the fixed field $F_{\pm\alpha}=F_{\alpha}^{gx_g}$.
	Since $F_{\alpha}/F_{\pm\alpha}$ is quadratic, this implies $F_{\alpha}=F_{\pm\alpha}\cdot K$.

	Let us prove \eqref{item:unramroot}.
	By \eqref{item:compfield} we have $F_{\alpha}=F_{\pm\alpha}\cdot K$.
	Now $K/F$ is unramified and so is $F_{\alpha}/F_{\pm\alpha}$.
\end{proof}

We next investigate symmetric ramified roots.
In fact, we can show that they are of a very limited form as follows:

\begin{prop}\label{prop:symmram}
The set $\Phi (\bfS, \G)$ has a symmetric ramified root if and only if $e$ is even.
In this case there exists a unique $\Gamma_{F}$-orbit of symmetric ramified roots and it is represented by $\begin{bmatrix}1 \\ \sigma^{\frac{e}{2}}\end{bmatrix}$.
\end{prop}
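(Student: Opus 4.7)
The plan is to enumerate candidate roots and argue by cases. By Lemma~\ref{lem:symm}\eqref{item:unramroot} (contrapositive), any symmetric ramified root is represented by some $g \in \{\Gamma_F/\Gamma_E\}$ lying in $\Gamma_K$, where $K = F[\mu_E]$ is the maximal unramified subextension of $E/F$. Since $\sigma$ fixes $\mu_L$ pointwise while $\phi$ acts on $\mu_L$ as the $q$-th power map, the element $\sigma^k\phi^i$ lies in $\Gamma_K$ exactly when $q^i \equiv 1 \pmod{q^f-1}$, i.e., $f \mid i$; in the range $0 \le i \le f-1$ this forces $i = 0$. So the only candidates are the roots $\alpha_k := \begin{bmatrix}1\\\sigma^k\end{bmatrix}$ for $1 \le k \le e-1$.

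For the existence direction I would suppose $e$ is even and apply Proposition~\ref{prop:Tam3.2}\eqref{item:xg} with $x_g = 1$: since $\sigma^{e/2}\cdot 1 \cdot \sigma^{e/2} = \sigma^e = 1 \in \Gamma_E$, the root $\alpha_{e/2}$ is symmetric. Because $\sigma^{e/2}$ fixes $\mu_E$ pointwise and sends $\varpi_E$ to $-\varpi_E$, it normalizes $\Gamma_E$, so $\Gamma_\alpha = \Gamma_E$ and $F_\alpha = E$, while $F_{\pm\alpha} = E^{\sigma^{e/2}} = F[\mu_E,\varpi_E^2]$ has ramification index $e/2$ over $F$. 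Hence $F_\alpha/F_{\pm\alpha}$ is a totally ramified quadratic extension, proving that $\alpha_{e/2}$ is symmetric ramified.

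For uniqueness I would rule out each other $k \in \{1,\ldots,e-1\}\setminus\{e/2\}$ by splitting on whether $\zeta_e^k \in E$. If $\zeta_e^k \in E$ then $\sigma^k$ normalizes $\Gamma_E$, giving $\Gamma_\alpha = \Gamma_E$, and Proposition~\ref{prop:Tam3.2}\eqref{item:xg} forces $\sigma^{2k} \in \Gamma_E$, i.e., $\zeta_e^{2k}=1$; together with $1 \le k \le e-1$ this yields $k = e/2$, contradicting the hypothesis. If instead $\zeta_e^k \notin E$, then $F_\alpha = E \cdot \sigma^k(E) = E[\zeta_e^k]$ is a proper \emph{unramified} extension of $E$ (the element $\zeta_e^k$ being a root of unity of order prime to $p$), so $e(F_\alpha/F) = e$; moreover $\zeta_e^k \notin \{\pm 1\} \subset E$ forces the order $e/\gcd(e,k)$ of $\zeta_e^k$ to be $\geq 3$. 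Assuming $\alpha_k$ symmetric, an explicit computation in $\Gamma_{L/F} = \langle\sigma\rangle\rtimes\langle\phi\rangle$ using $\phi\sigma\phi^{-1}=\sigma^q$ and $\Gamma_{L/E} = \langle \sigma^c\phi^f\rangle$ translates the condition $\sigma^k x_g \sigma^k \in \Gamma_E$ into the existence of $a \in \Z$ with $k(1+q^{fa}) \equiv 0 \pmod{e}$, equivalently $q^{fa}\equiv -1 \pmod{e/\gcd(e,k)}$. Since $e/\gcd(e,k) \ge 3$, this entails $q^{fa}\not\equiv 1$ in the same modulus, so the involution $\sigma^k x_g$ of $F_\alpha/F_{\pm\alpha}$ sends $\overline{\zeta_e^k}$ to $\overline{\zeta_e^k}^{\,q^{fa}}\neq \overline{\zeta_e^k}$, acting nontrivially on the residue field $k_{F_\alpha}$. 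Hence $f(F_\alpha/F_{\pm\alpha})=2$ and $e(F_\alpha/F_{\pm\alpha})=1$, so $F_\alpha/F_{\pm\alpha}$ is unramified, contradicting the ramifiedness hypothesis. I expect this last step---converting the abstract symmetry condition into the explicit congruence on $q^{fa}$ and then reading it back as a residue-field statement---to be the main obstacle; the rest of the argument is a straightforward book-keeping with uniformizers and roots of unity.
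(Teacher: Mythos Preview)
Your proof is correct and takes essentially the same approach as the paper: reduce to $g=\sigma^k$ via Lemma~\ref{lem:symm}\eqref{item:unramroot}, verify directly that $\alpha_{e/2}$ is symmetric ramified, then for the converse derive the symmetry congruence $k(1+q^{fr})\equiv 0\pmod e$ from Proposition~\ref{prop:Tam3.2}\eqref{item:xg} and analyze how $gx_g$ acts on $\zeta_e^{k}$ to control ramification. The only organizational difference is that the paper avoids your case split on whether $\zeta_e^k\in E$: it observes that ramifiedness of $\alpha$ is exactly the second congruence $k(q^{fr}-1)\equiv 0\pmod e$ (since $\mu_{F_\alpha}$ is generated over $\mu_E$ by $\zeta_e^k$), and then subtracts the two congruences to obtain $2k\equiv 0\pmod e$ in one stroke.
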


\begin{proof}
We first prove that if $e$ is even, then the root $\alpha=\begin{bmatrix}1\\\sigma^{\frac{e}{2}}\end{bmatrix}$ is a symmetric ramified root.
Let us apply Proposition \ref{prop:Tam3.2} \eqref{item:xg} to $g:=\sigma^{\frac{e}{2}}$.
Note that the behaviour of $g$ on $E$ is particularly simple:
\[
g=\sigma^{\frac{e}{2}} \colon
\zeta \mapsto \zeta \quad (\zeta \in \mu_{E})
\quad\text{and}\quad 
\varpi_{E} \mapsto -\varpi_{E}.
\]
Thus $g$ maps $E$ onto itself and we can take $x_g$ as in Proposition \ref{prop:Tam3.2} \eqref{item:xg} to be the identity.
In particular, $\alpha$ is symmetric.
Moreover, by noting that $F_{\alpha}=E\cdot g(E)=E$, we have $F_{\pm\alpha}=E^{gx_{g}}=E^{g}=F[\mu_{E}, \varpi_{E}^2]$.
Hence $F_{\alpha}$ is a ramified quadratic extension of $F_{\pm\alpha}$.
In other words, $\alpha$ is ramified.

We next prove the converse.
To be more precise, let $\alpha\in\Phi(\bfS,\G)$ be a symmetric ramified root of the form $\begin{bmatrix}1\\g\end{bmatrix}$ for some $g=\sigma^{k}\phi^{i}\in\{\Gamma_{F}/\Gamma_{E}\}$.
Then it is enough to show that $e$ is even and $g$ is necessarily equal to $\sigma^{\frac{e}{2}}$.

First, since our $\alpha$ is symmetric ramified, $g$ fixes $F[\mu_{E}]$ by Lemma \ref{lem:symm} \eqref{item:unramroot}.
In other words, we may assume $g=\sigma^{k}$ with $0< k \leq e-1$.
Let us show that $e$ is even and that this $k$ is in fact given by $\frac{e}{2}$.

We take $x_{g}\in\Gamma_{E}$ as in Proposition \ref{prop:Tam3.2} \eqref{item:xg}, that is, $gx_{g}g$ belongs to $\Gamma_{E}$.
Here, since we have $\Gamma_{L/E}=\langle \sigma^{c} \phi^{f}\rangle$, we may assume that $x_{g}$ is given by $(\sigma^{c}\phi^{f})^{r}$ for some integer $r\in\Z$.
Then we can easily compute that
\begin{align*}
x_{g}=\sigma ^{c\frac{q^{fr}-1}{q^f-1}}\phi ^{fr} \quad\text{and}\quad
gx_{g}g=\sigma ^{k(1+q^{fr})+c\frac{q^{fr}-1}{q^f-1}}\phi ^{fr}.
\end{align*}
Thus the condition that $gx_{g}g\in\Gamma_{E}$ can be rephrased as
\begin{equation} \label{eq:symmcond}
k(1+q^{fr})\equiv 0 \pmod e.
\end{equation}

On the other hand, recall that the unique nontrivial element of the Galois group $\Gamma_{F_{\alpha}/F_{\pm\alpha}}$ is represented by $gx_{g}$.
Thus our symmetric root $\alpha$ is ramified if and only if 
\begin{equation} \label{eq:ramcond}
gx_{g}(\zeta)=\zeta \quad (\text{for any }\zeta \in \mu_{F_{\alpha}}).
\end{equation}
As we have $F_{\alpha}=E\cdot g(E)=F[\varpi_{E}, \zeta_{q^f-1}, \zeta_{e}^{k}]$ (here $\zeta_{q^f-1}\in \mu_{E}$ is a primitive $(q^{f}-1)$st root of unity), the condition \eqref{eq:ramcond} is equivalent to the condition that
\[
gx_{g}(\zeta_{q^f-1})=\zeta_{q^f-1}
\quad\text{and}\quad 
gx_{g}(\zeta_{e}^k)=\zeta_{e}^k.
\]
Since $x_{g}$ lies in $\Gamma_{L/E}$ and $\sigma$ fixes $\mu_{L}$, the product $gx_{g}$ fixes $\mu_{E}$.
In particular, the first equation is automatic.
Therefore, by noting that we have $gx_{g}(\zeta_{e}^k)=\zeta_{e}^{kq^{fr}}$, we see that the condition \eqref{eq:ramcond} holds if and only if 
\[
k(q^{fr}-1)\equiv 0 \pmod e,
\]
    which in turn is equivalent to $-2k\equiv 0\pmod e$
    in view of \eqref{eq:symmcond}.
    Recalling that $0< k \leq e-1$ we conclude that
    if $g$ is symmetric ramified then
    $e$ is even and $k=\frac{e}{2}$.	
\end{proof}

We finally describe a condition for a given symmetric root to be unramified.

\begin{prop}\label{prop:symmur}
Let $\alpha\in\Phi(\bfS,\G)$ be a root of the form $\begin{bmatrix}1\\g\end{bmatrix}$ for some $g=\sigma^{k}\phi^{i}\in\{\Gamma_{F}/\Gamma_{E}\}$.
If $\alpha$ is symmetric unramified, then $i$ is given by either $0$ or $\frac{f}{2}$ (in the latter case, $f$ must be even).
Moreover, in the case where $i=0$, $k$ is not equal to $\frac{e}{2}$.
\end{prop}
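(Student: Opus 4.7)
The plan is to break the proposition into two claims and handle them separately. The second claim, that $k \neq \tfrac{e}{2}$ when $i=0$, follows immediately from Proposition \ref{prop:symmram}: if $g=\sigma^{e/2}$, then $\alpha$ represents the unique $\Gamma_F$-orbit of symmetric ramified roots, contradicting symmetric unramifiedness. So this part costs nothing beyond invoking what we have already proved.

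For the first claim, that $i\in\{0,\tfrac{f}{2}\}$, I would only use the symmetry of $\alpha$ (not its unramifiedness). By Proposition \ref{prop:Tam3.2}(2), symmetry provides some $x_g\in\Gamma_E$ with $gx_gg\in\Gamma_E$. Since $L/E$ is unramified with $\Gamma_{L/E}=\langle\sigma^c\phi^f\rangle$, we may write $x_g\equiv(\sigma^c\phi^f)^r\pmod{\Gamma_L}$ for some $r\in\Z$, which, as in the proof of Proposition \ref{prop:symmram}, gives
\[
x_g\equiv\sigma^{c\frac{q^{fr}-1}{q^f-1}}\phi^{fr}\pmod{\Gamma_L}.
\]
Then, repeatedly applying $\phi\sigma\phi^{-1}=\sigma^q$ (hence $\phi^{i}\sigma^{a}=\sigma^{aq^{i}}\phi^{i}$) to commute the $\phi$'s past the $\sigma$'s, a direct computation yields
\[
gx_gg\equiv\sigma^{k(1+q^{i+fr})+c\frac{q^{fr}-1}{q^f-1}q^i}\phi^{2i+fr}\pmod{\Gamma_L}.
\]
For this element to lie in $\Gamma_E/\Gamma_L=\langle\sigma^c\phi^f\rangle$, the exponent of $\phi$ must be a multiple of $f$, i.e., $2i\equiv 0\pmod f$. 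Given the range $0\leq i\leq f-1$, this forces $i=0$ or $i=\tfrac{f}{2}$ (the latter requiring $f$ to be even).

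There is no real obstacle here; the only thing to be careful about is the noncommutative bookkeeping in $\Gamma_{L/F}=\langle\sigma\rangle\rtimes\langle\phi\rangle$, but this is entirely parallel to the computation already carried out for Proposition \ref{prop:symmram}. In fact the first claim can be seen as a companion to Proposition \ref{prop:symmram}: the latter shows that the $\sigma$-exponent condition \eqref{eq:symmcond} together with the ramifiedness constraint pins down $g=\sigma^{e/2}$, whereas here the $\phi$-exponent condition alone pins down $i$.
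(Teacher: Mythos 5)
Your proof is correct. For the second assertion you deduce $k\neq\frac{e}{2}$ from Proposition \ref{prop:symmram} exactly as the paper does. For the first assertion, the paper simply cites Tam's Proposition 3.3(iii); you instead re-derive it by writing $x_{g}\equiv(\sigma^{c}\phi^{f})^{r}=\sigma^{c\frac{q^{fr}-1}{q^{f}-1}}\phi^{fr}$ modulo $\Gamma_{L}$, commuting $\phi$'s past $\sigma$'s via $\phi^{a}\sigma^{b}=\sigma^{bq^{a}}\phi^{a}$ to get
\[
gx_{g}g\equiv\sigma^{k(1+q^{i+fr})+c\frac{q^{fr}-1}{q^{f}-1}q^{i}}\phi^{2i+fr}\pmod{\Gamma_{L}},
\]
and reading off the necessary condition $2i\equiv0\pmod{f}$ from the $\phi$-component being forced to lie in $\langle\phi^{f}\rangle$. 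This is exactly the computation underlying Tam's cited result, so the two proofs agree in substance; yours has the advantage of being self-contained and of running in visible parallel with the paper's own proof of Proposition \ref{prop:symmram}, which performs the $i=0$ special case of the same bookkeeping. Your observation that the constraint $i\in\{0,\tfrac{f}{2}\}$ uses only symmetry, not unramifiedness, is correct and worth noting (it is consistent with Tam's statement being about symmetric roots in general); unramifiedness only enters through the second assertion.
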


\begin{proof}
The first assertion is nothing but \cite[Proposition 3.3 (iii)]{MR3509939}.
The latter assertion follows from Proposition \ref{prop:symmram}.
\end{proof}

Now let us summarize the relation between Adler--DeBacker--Spice's and Tam's notions of unramifiedness/ramifiedness for symmetric roots.
Let $\alpha=\begin{bmatrix}1\\\sigma^{k}\phi^{i}\end{bmatrix}\in\Phi(\bfS,\G)$ be a symmetric root with $0\leq k \leq e-1$ and $0\leq i \leq f-1$.
Then $\alpha$ is ramified in the sense of Adler--DeBacker--Spice only when $(k,i)=(\frac{e}{2},0)$.
In this case, $\alpha$ is ramified also in the sense of Tam (see \cite[1710 page]{MR3509939}).
Next let us assume $\alpha$ is unramified in the sense of Adler--DeBacker--Spice.
Then, by Proposition \ref{prop:symmur}, $i$ equals either $0$ or $\frac{f}{2}$.
Moreover, the root $\alpha$ is
\[
\begin{cases}
\text{ramified in the sense of Tam}& \text{when $i=0$},\\
\text{unramified in the sense of Tam}& \text{when $i=\frac{f}{2}$}.
\end{cases}
\]

\[
    \begin{tabular}{|c|c|c|} \hline
      symmetric root $\alpha=\begin{bmatrix}1\\g\end{bmatrix}$ & the sense of ADS & the sense of Tam \\ \hline
      $g=\sigma^{\frac{e}{2}}$ &  ramified &  ramified \\ \hline
      $g=\sigma^{k} (k\neq \frac{e}{2})$ &  unramified &  ramified \\ \hline
      $g=\sigma^{k}\phi^{\frac{f}{2}}$ &  unramified &  unramified \\ \hline
    \end{tabular}
\]

\subsection{Howe factorization}\label{subsec:Howe}
In this section, we recall the notion of a \textit{Howe factorization} according to Kaletha's sophisticated definition.
Temporarily let $\G$ be a general tamely ramified connected reductive group over $F$.

Let $\bfS$ be a tamely ramified elliptic maximal torus of $\G$ defined over $F$ and $\xi$ a character of $\bfS(F)$.
For each positive real number $r\in\R_{>0}$, we define a subset $\Phi_{r}$ of $\Phi(\bfS,\G)$ as in \cite[1107 page, (3.6.1)]{MR4013740} by
\[
\Phi_{r}
:=
\{
\alpha\in\Phi(\bfS,\G) 
\mid
\xi\equiv \mathbbm{1} \text{ on }\Nr_{\bfS(L)/\bfS(F)}\circ\alpha^{\vee}(L_{r}^{\times})
\},
\]
where $L$ is the splitting field of the torus $\bfS$.
We let $r_{d-1}>\cdots >r_{1}>r_{0}$ be the real numbers satisfying 
\[
\Phi_{r}
\subsetneq
\Phi_{r+}:=\bigcap_{r'>r}\Phi_{r'}
\]
and call them the \textit{jumps} of $\xi$.
We put $r_{d}:=\depth(\xi)$ and $r_{-1}:=0$ (note that $r_{d}\geq r_{d-1}$).
Since each $\Phi_{r_{i}}$ is a Levi subsystem of $\Phi(\bfS,\G)$ by \cite[Lemma 3.6.1]{MR4013740}, we get a sequence $\G^{0}\subsetneq\cdots\subsetneq\G^{d-1}$ of tame twisted Levi subgroups of $\G$ corresponding to the sequence $\Phi_{r_{0}}\subsetneq\cdots\subsetneq\Phi_{r_{d-1}}$ (i.e., each $\G^{i}$ is the tame twisted Levi subgroup of $\G$ which contains $\bfS$ and has $\Phi_{r_{i}}$ as its roots).
We put $\G^{-1}:=\bfS$ and $\G^{d}:=\G$ so that we have $\G^{-1}\subset\G^{0}\subsetneq\cdots\subsetneq\G^{d-1}\subsetneq\G^{d}$.

\begin{defn}[Howe factorization, {\cite[Definition 3.6.2]{MR4013740}}]\label{defn:Howe}
A sequence of characters $\phi_{i}\colon\G^{i}(F)\rightarrow\C^{\times}$ for each $i=-1,\ldots,d$ is called a \textit{Howe factorization} of $\xi$ if it satisfies the following conditions:
\begin{itemize}
\item[(i)]
Each $\phi_{i}$ is trivial on $\G^{i}_{\mathrm{sc}}(F)$ (the image of the set of $F$-valued points of the simply-connected cover of the derived group of $\G^{i}$ in $\G^{i}(F)$).
\item[(ii)]
\begin{itemize}
\item
For $i=-1$, we have
$\begin{cases}
\phi_{-1}=\mathbbm{1}&\text{if $\bfS=\G^{0}$},\\
\phi_{-1}|_{\bfS(F)_{0+}}=\mathbbm{1}&\text{if $\bfS\subsetneq\G^{0}$}.
\end{cases}$
\item
For all $0\leq i \leq d-1$, the character $\phi_{i}$ is $\G^{i+1}$-generic of depth $r_{i}$.
\item
For $i=d$, we have
$\begin{cases}
\phi_{d}=\mathbbm{1}&\text{if $r_{d-1}=r_{d}$},\\
\depth(\phi_{d})=r_{d}&\text{if $r_{d-1}<r_{d}$}.
\end{cases}$
\end{itemize}
\item[(iii)]
We have
\[
\xi=\prod_{i=-1}^{d} \phi_{i}|_{\bfS(F)}.
\]
\end{itemize}
Here see \cite[Section 9]{MR1824988} for the definition of being 
$\G^{i+1}$-generic of depth $r_{i}$ for each character $\phi_{i}$.
\end{defn}

As proved in \cite[Proposition 3.6.7]{MR4013740}, we can always take a Howe factorization for any given character $\xi$ of $\bfS(F)$.
This fact has been known by Howe and Moy classically (\cite{MR0492087,MR853218}) in the $\GL_{n}$-case, and the connection between their definition of a Howe factorization and Kaletha's one in the $\GL_{n}$-case can be roughly explained as follows:
Let $E$ be a tamely ramified extension of $F$ of degree $n$ corresponding to $\bfS\subset\GL_{n}$.
In the definition of Howe and Moy, a Howe factorization of $\xi$ consists of a sequence of subfields 
\[
F=E_{d}
\subsetneq
E_{d-1}
\subsetneq
\cdots
\subsetneq
E_{0}
\subset
E_{-1}=E
\]
and characters $\xi_{i}$ of $E_{i}^{\times}$ for each $-1\leq i\leq d$ satisfying several conditions such as 
\[
\xi
=
\xi_{-1}
\cdot
(\xi_{0}\circ\Nr_{E/E_{0}})
\cdots
(\xi_{d}\circ\Nr_{E/E_{d}})
\]
(see, for example, \cite[Definition 3.33]{MR2431732}).
Suppose that we have such sequences of fields and characters.
Then, by considering the centralizers of 
\[
F^{\times}=E_{d}^{\times}
\subsetneq
E_{d-1}^{\times}
\subsetneq
\cdots
\subsetneq
E_{0}^{\times}
\subset
E_{-1}^{\times}\cong\bfS(F)
\]
in $\GL_{n}$, we get a sequence of tame twisted Levi subgroups of $\G$:
\[
\GL_{n}=\G^{d}
\supsetneq
\G^{d-1}
\supsetneq
\cdots
\supsetneq
\G^{0}
\supset
\G^{-1}=\bfS.
\]
Here note that if we put $n_{i}:=n/[E_{i}:F]$, then $\G^{i}$ is isomorphic to $\Res_{E_{i}/F}\GL_{n_{i},E_{i}}$.
In fact, this sequence of tame twisted Levi subgroups $\G^{0}\subsetneq\cdots\subsetneq\G^{d-1}$ has $\Phi_{r_{0}}\subsetneq\cdots\subsetneq\Phi_{r_{d-1}}$ as its roots.
Moreover, if we put $\phi_{i}:=\xi_{i}\circ\det$, we get a Howe factorization in the sense of Kaletha.
Note that, since $\G^{i}_{\mathrm{sc}}(F)$ is given by $\SL_{n_{i}}(E_{i})$, the triviality condition for $\phi_{i}$ on $\G^{i}_{\mathrm{sc}}(F)$ is equivalent to the condition that it factors through the determinant map.
Furthermore, as the determinant map is nothing but the norm map $\Nr_{E/E_{i}}$ on $\bfS(F)\cong E^{\times}$, we have $\phi_{i}|_{\bfS(F)}=\xi_{i}\circ\Nr_{E/E_{i}}$.
\[
\xymatrix{
\bfS(F)\cong E^{\times}\ar[rd]_-{\Nr_{E/E_{i}}}\ar@{^{(}->}[r]&\G^{i}(F)=\GL_{n_{i}}(E_{i})\ar[r]^-{\phi_{i}}\ar[d]^-{\det}&\C^{\times}\\
&E_{i}^{\times}\ar[ru]_-{\xi_{i}}&
}
\]

See \cite[Section 3.5]{MR2431732} for more detailed explanation on the relation between these two apparently different definitions of a Howe factorization.

\begin{rem}\label{rem:jump}
In the language of Bushnell--Henniart, the jumps are defined to be the $E$-levels of $\xi_{i}\circ\Nr_{E/E_{i}}$ for $0\leq i\leq d-1$ (see, for example, \cite[Section 5.1]{MR3509939}).
If we put the $E$-levels of $\xi_{i}\circ\Nr_{E/E_{i}}$ to be $t_{i}$, then the relation between $t_{i}$ and the depth $r_{i}$ of $\phi_{i}$ is expressed as 
\[
r_{i}=\frac{t_{i}}{e}.
\]
This follows from the tameness assumption on $E/F$ and the fact that the depth of $\phi_{i}|_{\bfS(F)}$ is again given by $r_{i}$ (see \cite[Lemma 2.52]{MR2431732}).
\end{rem}

The following ``descent'' property of a Howe factorization will be needed later (in Section \ref{subsec:symmram}).

\begin{prop}\label{prop:Howe-rest}
Let $\G$ be a tamely ramified connected reductive group over $F$ and $\bfS$ a tamely ramified elliptic maximal torus of $\G$.
Let $\xi$ be a character of $\bfS(F)$.
We consider a tame twisted Levi subgroup $\bfH$ of $\G$ containing $\bfS$.
Then the set of jumps $r'_{d'-1}>\cdots>r'_{0}$ of $\xi$ with respect to $(\bfS,\bfH)$ is contained in that $r_{d-1}>\cdots>r_{0}$ with respect to $(\bfS,\G)$ (note that we always have $r'_{d'}=r_{d}$ and $r'_{-1}=r_{-1}$).
Moreover we can take
\begin{itemize}
\item
a Howe factorization $(\phi_{-1},\ldots,\phi_{d})$ of $\xi$ with respect to $\bfS\subset\G$ and
\item
a Howe factorization $(\phi'_{-1},\ldots,\phi'_{d'})$ of $\xi$ with respect to $\bfS\subset\bfH$
\end{itemize}
such that we have
\[
\phi_{[i]}|_{\bfS(F)_{r_{[i]}}}=\phi'_{i}|_{\bfS(F)_{r'_{i}}}
\]
for any $0\leq i\leq d'$.
Here $[i]$ denotes the unique index satisfying $r_{[i]}=r'_{i}$.
\end{prop}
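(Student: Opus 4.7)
The plan is to first prove the containment of jump sets, and then exhibit a pair of Howe factorizations whose restrictions to $\bfS(F)$ are compatible in the required way by bundling the $\G$-factorization characters across depth intervals of $\bfH$.

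First, I would observe that the set $\Phi_{r}$ defined in \cite[1107 page, (3.6.1)]{MR4013740} depends only on the pair $(\bfS,\xi)$ and the coroots $\alpha^{\vee}$, not on the ambient group. Thus the analogous set $\Phi_{r}^{\bfH}$ attached to $(\bfS,\bfH)$ equals $\Phi_{r}\cap\Phi(\bfS,\bfH)$, and $\Phi_{r}^{\bfH}\subsetneq\Phi_{r+}^{\bfH}$ forces $\Phi_{r}\subsetneq\Phi_{r+}$. Hence every jump $r'_{i}$ of $\xi$ with respect to $\bfH$ is one of the jumps $r_{[i]}$ with respect to $\G$, and the same identification yields $\bfH^{i}=\G^{[i]}\cap\bfH$ as a tame twisted Levi subgroup of $\bfH$.

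Next, I would fix a Howe factorization $(\phi_{-1},\ldots,\phi_{d})$ of $\xi$ with respect to $\bfS\subset\G$ via \cite[Proposition 3.6.7]{MR4013740}, and, setting $r'_{-1}:=0$, define for each $0\leq i\leq d'$
\[
\psi_{i}:=\prod_{\substack{0\leq j\leq d\\ r'_{i-1}<r_{j}\leq r'_{i}}}\phi_{j}|_{\bfS(F)}.
\]
Since $\phi_{j}|_{\bfS(F)_{r_{j}+}}=\mathbbm{1}$, we have $\phi_{j}|_{\bfS(F)_{r'_{i}}}=\mathbbm{1}$ whenever $r_{j}<r'_{i}$, so only the $j=[i]$ factor survives upon restriction to $\bfS(F)_{r'_{i}}$ and we obtain the key identity
\[
\psi_{i}|_{\bfS(F)_{r'_{i}}}=\phi_{[i]}|_{\bfS(F)_{r'_{i}}}\qquad(0\leq i\leq d').
\]
I would then extend each $\psi_{i}$ to a character $\phi'_{i}$ of $\bfH^{i}(F)$ that is trivial on $\bfH^{i}_{\mathrm{sc}}(F)$ and $\bfH^{i+1}$-generic of depth $r'_{i}$ for $0\leq i\leq d'-1$, and choose $\phi'_{-1}$ and $\phi'_{d'}$ suitably so as to absorb $\phi_{-1}|_{\bfS(F)}$ and satisfy the depth conditions at the extreme indices while preserving $\prod_{i}\phi'_{i}|_{\bfS(F)}=\xi$. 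By construction, the resulting tuple is a Howe factorization of $\xi$ with respect to $\bfS\subset\bfH$, and the required compatibility $\phi_{[i]}|_{\bfS(F)_{r_{[i]}}}=\phi'_{i}|_{\bfS(F)_{r'_{i}}}$ follows directly from the identity above.

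The main obstacle will be the extension step: producing each $\phi'_{i}$ with the correct depth and genericity from the seed $\psi_{i}$ on $\bfS(F)$. The $\bfH^{i+1}$-genericity of the seed modulo $\bfS(F)_{r'_{i}+}$ is inherited from the $\G^{[i]+1}$-genericity of $\phi_{[i]}$: indeed $\Phi(\bfS,\bfH^{i+1})\subset\Phi(\bfS,\G^{[i]+1})$, and Yu's conditions (GE1), (GE2) from \cite[Section 9]{MR1824988} pass to sub-root systems; on the leading quotient $\bfS(F)_{r'_{i}}/\bfS(F)_{r'_{i}+}$, moreover, $\psi_{i}$ agrees with $\phi_{[i]}|_{\bfS(F)}$. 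Granted this, existence of a generic extension to $\bfH^{i}(F)$ is exactly the argument in the proof of \cite[Proposition 3.6.7]{MR4013740} applied to the pair $(\bfS,\bfH)$. The bookkeeping around the extreme indices $i=-1$ and $i=d'$, which must accommodate the depth-zero piece $\phi_{-1}|_{\bfS(F)}$ and the special treatment at depth $r_{d}=r'_{d'}$, is the only genuinely delicate point, but can be dealt with by the same techniques as in loc.\ cit.
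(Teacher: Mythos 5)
Your high-level plan --- first establish the containment of jump sets, then bundle the $\G$-factorization characters across consecutive $\bfH$-jumps --- is the right idea, and in fact your $\psi_{i}$ coincides exactly with the restriction to $\bfS(F)$ of the $\phi'_{i}$ the paper constructs. But your execution introduces a gap precisely at the step you flag as the main obstacle. You first restrict the product $\prod_{j}\phi_{j}$ down to $\bfS(F)$ to form $\psi_{i}$, and then propose to \emph{extend} $\psi_{i}$ back up to a character $\phi'_{i}$ of $\bfH^{i}(F)$ satisfying triviality on $\bfH^{i}_{\mathrm{sc}}(F)$, genericity, and $\phi'_{i}|_{\bfS(F)}=\psi_{i}$. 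That extension problem is genuinely nontrivial: Kaletha's \cite[Proposition 3.6.7]{MR4013740} constructs \emph{some} Howe factorization of a given character of $\bfS(F)$, but it does not, as stated, hand you an extension of a \emph{prescribed} character $\psi_{i}$ of $\bfS(F)$ to a generic character of $\bfH^{i}(F)$; you would have to reopen and re-engineer its proof to force each $\phi'_{i}|_{\bfS(F)}$ to equal your chosen $\psi_{i}$, while simultaneously ensuring the product over $i$ returns $\xi$. Citing loc.\ cit.\ as doing ``exactly'' this overstates what it provides.

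The key observation you miss is that no extension is needed at all. For $j$ in the range $[i-1]+1\leq j\leq[i]$ we have $\bfH^{i}\subset\G^{[i-1]+1}\subset\G^{j}$, so each $\phi_{j}$ is already a character of a group \emph{containing} $\bfH^{i}(F)$, and one may simply put $\phi'_{i}:=\prod_{j=[i-1]+1}^{[i]}\phi_{j}|_{\bfH^{i}(F)}$, which automatically restricts to your $\psi_{i}$ on $\bfS(F)$. Triviality on $\bfH^{i}_{\mathrm{sc}}(F)$ then follows because the map $\bfH^{i}_{\mathrm{sc}}\to\bfH^{i}\hookrightarrow\G^{j}$ factors through $\G^{j}_{\mathrm{sc}}$, and genericity of $\phi'_{i}$ on $\bfH^{i}(F)_{x,r'_{i}}$ follows from $\phi'_{i}|_{\bfH^{i}(F)_{x,r'_{i}}}=\phi_{[i]}|_{\bfH^{i}(F)_{x,r'_{i}}}$ together with $\bfH^{i}$ being a tame twisted Levi of $\G^{[i]}$ (this is the sub-root-system point you correctly identify, but now applied to an honest restriction rather than a conjectured extension). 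This is the route the paper takes; your version could in principle be pushed through, but only by essentially reproving the restriction construction in a more convoluted way.
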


\begin{proof}
Let $\Phi_{r}$ and $\Phi'_{r}$ be the $r$-th filtration on the sets $\Phi(\bfS,\G)$ and $\Phi(\bfS,\bfH)$ of roots, respectively.
Then, as $\Phi(\bfS,\bfH)$ is a subset of $\Phi(\bfS,\G)$, the first assertion on the jumps immediately follows from the definitions of $\{\Phi_{r}\}_{r\in\R_{>0}}$ and $\{\Phi'_{r}\}_{r\in\R_{>0}}$.
Moreover, for any index $0\leq i\leq d'$, we have $\Phi'_{r'_{i-1}}\subset\Phi_{r_{[i-1]}}$ and $\Phi'_{r'_{i}}=\Phi'_{r'_{i-1}+}\subset\Phi_{r_{[i-1]+1}}$ by the definition of jumps.
Thus each $\bfH^{i-1}$ is contained in $\G^{[i-1]}$ and that $\bfH^{i}$ is contained in $\G^{[i-1]+1}$.
\[
\xymatrix{
\G^{[i-1]}\ar@{^{(}->}[r]&\G^{[i-1]+1}\ar@{^{(}->}[r]&\cdots\ar@{^{(}->}[r]&\G^{[i]}\\
\bfH^{i-1}\ar@{^{(}->}[rrr]\ar@{^{(}->}[u]&&&\bfH^{i}\ar@{_{(}->}[llu]\ar@{^{(}->}[u]
}
\]
Note that, for $i=-1$, we always have $\G^{-1}=\bfH^{-1}=\bfS$ by definition.

Now we take a Howe factorization $(\phi_{-1},\ldots,\phi_{d})$ of $\xi$ with respect to $\bfS\subset\G$.
By noting that $r_{d}=r'_{d'}$ and $r_{-1}=r'_{-1}$ (i.e., $[d']=d$ and $[-1]=-1$), we define $\phi'_{i}$ as follows:
\begin{align*}
\phi'_{i}
&:=
\phi_{[i-1]+1}|_{\bfH^{i}(F)}\cdot\phi_{[i-1]+2}|_{\bfH^{i}(F)}\cdots\phi_{[i]}|_{\bfH^{i}(F)} \quad\text{for $1\leq i \leq d'$},\\
\phi'_{0}
&:=
\begin{cases}
\phi_{-1}|_{\bfS(F)}\cdot\phi_{0}|_{\bfS(F)}\cdots\phi_{[0]}|_{\bfS(F)}  & \text{if $\bfS=\bfH^{0}$},\\
\phi_{0}|_{\bfH^{0}(F)}\cdot\phi_{1}|_{\bfH^{0}(F)}\cdots\phi_{[0]}|_{\bfH^{0}(F)}  & \text{if $\bfS\subsetneq\bfH^{0}$},
\end{cases}\\
\phi'_{-1}
&:=
\begin{cases}
\mathbbm{1} & \text{if $\bfS=\bfH^{0}$},\\
\phi_{-1} & \text{if $\bfS\subsetneq\bfH^{0}$}.
\end{cases}
\end{align*}
Then $(\phi'_{-1},\ldots,\phi'_{d'})$ gives a desired Howe factorization.
Indeed, by \cite[Lemma 2.52]{MR2431732}, the depth of $\phi_{j}|_{\bfS(F)}$ is equal to that of $\phi_{j}$ for any $j$.
Note that, although only positive depth characters are treated in \cite[Lemma 2.52]{MR2431732}, it also holds that the depth of the restriction of a depth zero character is again zero by the tame descent property of Moy--Prasad filtrations (see, e.g., \cite[Lemma 3.8]{MR2431235}).
Hence the restrictions of $\phi_{[i-1]+1},\ldots,\phi_{[i]-1}$ to $\bfS(F)_{r'_{i}}$ are trivial and we have
\[
\phi_{[i]}|_{\bfS(F)_{r_{[i]}}}=\phi'_{i}|_{\bfS(F)_{r'_{i}}}.
\]
Moreover, we can check each condition in Definition \ref{defn:Howe} so that $(\phi'_{-1},\ldots,\phi'_{d'})$ is a Howe factorization of $\xi$ with respect to $(\bfS,\bfH)$ as follows:
\begin{itemize}
\item[(i)]
In general, for any sequence $\bfH_{1}\subset\bfH_{2}$ of connected reductive groups, the map $\bfH_{1,\mathrm{sc}}\rightarrow\bfH_{1}\subset\bfH_{2}$ factors through $\bfH_{2,\mathrm{sc}}$.
In particular, the image of $\bfH_{1,\mathrm{sc}}(F)$ in $\bfH_{2}(F)$ is contained in that of $\bfH_{2,\mathrm{sc}}(F)$ in $\bfH_{2}(F)$.
Thus the condition (i) for $(\phi_{-1},\ldots,\phi_{d})$ implies that for $(\phi'_{-1},\ldots,\phi'_{d'})$.
\item[(ii)]
\begin{itemize}
\item
The condition for $i=-1$ immediately follows from the definition of $\phi'_{-1}$.
\item
We assume that $0\leq i \leq d'-1$.
Recall that being $\bfH^{i+1}$-generic of depth $r'_{i}$ is a condition on the restriction $\phi'_{i}|_{\bfH^{i}(F)_{x,r'_{i}}}$ (see \cite[Section 9]{MR1824988}).
Here $x$ is the point of the reduced Bruhat--Tits building of $\bfH^{i}$ determined by $\bfS$ (see Section \ref{subsec:Heisen}) and $\bfH^{i}(F)_{x,r'_{i}}$ denotes the $r'_{i}$-th Moy--Prasad filtration of $\bfH^{i}(F)$ at $x$.
By the same reason as above, we have
\[
\phi'_{i}|_{\bfH^{i}(F)_{x,r'_{i}}}
=
\phi_{[i]}|_{\bfH^{i}(F)_{x,r'_{i}}}
\]
by \cite[Lemma 2.52]{MR2431732}.
Then, as $\bfH^{i}$ is a tame twisted Levi subgroup of $\G^{[i]}$ and we have $r'_{i}=r_{[i]}$, being $\G^{[i]+1}$-generic of depth $r_{[i]}$ for $\phi_{[i]}$ implies being $\bfH^{i+1}$-generic of depth $r'_{i}$ for $\phi'_{i}$.
\item
Finally, we consider the case where $i=d'$.
First, if we have $r'_{d'-1}=r'_{d'}$, then we have $r_{[d'-1]}=r_{[d']} (=r_{d})$.
Therefore, by noting that the sequence $r_{-1}<r_{0}<\cdots<r_{d-1}$ is strictly increasing, we necessarily have $r_{[d'-1]}=r_{d-1}=r_{d}$.
Thus the condition (ii) for $\phi_{d}$ and the definition of $\phi'_{d'}$ implies that $\phi'_{d'}=\phi_{d}|_{\bfH(F)}=\mathbbm{1}$.
Next let us suppose that $r'_{d'-1}<r'_{d'}$.
\begin{itemize}
\item
When $r_{d-1}<r_{d}$, we have $\depth(\phi_{d})=r_{d}$, hence the sequence $r_{[d'-1]+1}<\cdots<r_{d}$ of depths of $\phi_{[d'-1]+1},\ldots,\phi_{d}$ is strictly increasing.
Thus, again by noting that the depth behaves well under the tame descent (\cite[Lemma 2.52]{MR2431732}), we get
\[
\depth(\phi'_{d'})
=
\depth\bigl(\phi_{[d'-1]+1}|_{\bfH(F)}\cdots\phi_{d}|_{\bfH(F)}\bigr)\\
=
r_{d}
=
r'_{d'}.
\]
\item
When $r_{d-1}=r_{d}$, we have $\phi_{d}=\mathbbm{1}$.
Thus, similarly to above, we get
\[
\depth(\phi'_{d'})
=
\depth\bigl(\phi_{[d'-1]+1}|_{\bfH(F)}\cdots\phi_{d}|_{\bfH(F)}\bigr)\\
=
r_{d-1}.
\]
As we have $r_{d-1}=r_{d}=r'_{d'}$, we conclude that $\depth(\phi'_{d'})=r'_{d'}$.
\end{itemize}
\end{itemize}
\item[(iii)]
This follows immediately by the definition of $(\phi'_{-1},\ldots,\phi'_{d'})$ and the condition (iii) for $(\phi_{-1},\ldots,\phi_{d})$.
\end{itemize}
\end{proof}

\subsection{Notation around Heisenberg groups}\label{subsec:Heisen}
In this section, we recall briefly the notion of a t-factor which will be needed in the calculation of the factors of $\chi_{\Tam}$ associated to symmetric roots.
The content of this section is a summary of \cite[Sections 4 and 5.2]{MR3509939}.

Let $E$ be a tamely ramified extension of $F$ of degree $n$.
Then we get a hereditary order of $\End_{F}(E)$ defined by
\[
\mfA
:=
\{
X\in\End_{F}(E)
\mid
X\cdot\mfp_{E}^{k}\subset\mfp_{E}^{k} \text{ for any } k\in\Z
\}.
\]
We put $\mfP$ to be the Jacobson radical of $\mfA$, which is given by
\[
\mfP
=
\{
X\in\End_{F}(E)
\mid
X\cdot\mfp_{E}^{k}\subset\mfp_{E}^{k+1} \text{ for any } k\in\Z
\}.
\]
We consider the quotient
\[
\mfU:=\mfA/\mfP
\]
of $\mfA$ by $\mfP$, which admits a natural $k_{F}$-vector space structure.
Then the following natural action of $E^{\times}$ on $\End_{F}(E)$ induces an $E^{\times}$-action on $\mfU$:
\[
X\in \End_{F}(E),\quad 
t\in E^{\times},\quad
t\cdot X := t\circ X\circ t^{-1},
\]
where $t$ and $t^{-1}$ on the right-hand side are the multiplication maps via $t$ and $t^{-1}$, respectively.
As this action factors through the finite quotient $E^{\times}/F^{\times}U_{E}^{1}$ of order prime to $p$, the space $\mfU$ decomposes into a direct sum of eigenspaces with respect to the $E^{\times}$-action.
This decomposition is described as follows:
\begin{prop}[{\cite[Proposition 4.4]{MR3509939}}]\label{prop:U-decomp}
We have a decomposition of $\mfU$ into a direct sum of $E^{\times}$-isotypic subspaces
\[
\mfU
=
\bigoplus_{[g]\in\Gamma_{E}\backslash\Gamma_{F}/\Gamma_{E}}
\mfU_{[g]}.
\]
Here, for each $[g]\in\Gamma_{E}\backslash\Gamma_{F}/\Gamma_{E}$, the subspace $\mfU_{[g]}$ is canonically identified with the residue field $k_{E\cdot g(E)}$ of the composite field of $E$ and $g(E)$.
Moreover, the action of $t\in E^{\times}$ on $\mfU_{[g]}$ is given by the multiplication by $tg(t)^{-1}\in \mcO_{E\cdot g(E)}^{\times}$ mod $\mfp_{E\cdot g(E)}$.
\end{prop}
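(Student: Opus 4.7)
The plan is to decompose $\End_F(E)$ as a rational $E^{\times}$-module via base change to $\overline{F}$, and then pass to the quotient $\mfU=\mfA/\mfP$. Under the standard isomorphism $E\otimes_F\overline{F}\xrightarrow{\sim}\prod_{g\in\Gamma_F/\Gamma_E}\overline{F}$ given by $x\otimes y\mapsto(g(x)y)_g$, one identifies $\End_F(E)\otimes_F\overline{F}$ with $M_n(\overline{F})$, under which conjugation by $t\in E^{\times}$ corresponds to conjugation by the diagonal element $(g_i(t))_i$. Hence each matrix unit $e_{ij}$ is an $E^{\times}$-eigenvector with eigenvalue $g_i(t)g_j(t)^{-1}$, and two such eigencharacters lie in a common $\Gamma_F$-orbit precisely when the pairs $(g_i\Gamma_E,g_j\Gamma_E)$ are $\Gamma_F$-conjugate under the diagonal action. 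Via the bijection $(g_i,g_j)\mapsto g_j^{-1}g_i$, these orbits are parametrized by $\Gamma_E\backslash\Gamma_F/\Gamma_E$, and grouping the $e_{ij}$'s by orbit and descending to $F$ yields an $F$-rational $E^{\times}$-isotypic decomposition $\End_F(E)=\bigoplus_{[g]}V_{[g]}$.

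To install the asserted $E\cdot g(E)$-structure on each $V_{[g]}$, I would exploit the commuting pair of left and right multiplication actions of $E$ on $\End_F(E)$, which together define an $E\otimes_F E$-module structure. The standard decomposition $E\otimes_F E\cong\prod_{[g]}E\cdot g(E)$ (with the $[g]$-factor corresponding to the $\Gamma_F$-orbit of the pair $(1,g)$) matches the $E^{\times}$-isotypic decomposition factor-by-factor, so each $V_{[g]}$ is canonically a one-dimensional $E\cdot g(E)$-vector space. Under this structure, the image of $t\otimes t^{-1}\in E\otimes_F E$ in the $[g]$-factor is precisely $tg(t)^{-1}$, which realizes the $E^{\times}$-conjugation action on $V_{[g]}$ as scalar multiplication by $tg(t)^{-1}$.

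To pass to $\mfU$, observe that the filtration $\{\mfp_E^k\}_k$ is preserved by both left and right multiplication by $\mcO_E$, so $\mfA$ and $\mfP$ are stable under $\mcO_E\otimes_{\mcO_F}\mcO_E$, and the $E^{\times}$-isotypic decomposition descends to $\mfU=\bigoplus_{[g]}\mfU_{[g]}$ with each $\mfU_{[g]}$ carrying a natural $k_{E\cdot g(E)}$-module structure. The main obstacle is the final identification $\mfU_{[g]}\cong k_{E\cdot g(E)}$, which I would settle by a dimension count. The hereditary-order description gives $\mfU\cong\prod_{k=0}^{e-1}\End_{k_F}(\mfp_E^k/\mfp_E^{k+1})$, so $\dim_{k_F}\mfU=ef^2$. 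On the other hand, since $e(E\cdot g(E)/F)=e$ for every $[g]$ (a feature of tame extensions, where all conjugates share the same ramification index), the identity $\sum_{[g]}[E\cdot g(E):F]=n^2$ gives $\sum_{[g]}[k_{E\cdot g(E)}:k_F]=ef^2$ as well. Since each $\mfU_{[g]}$ is nonzero (for instance $\mfU_{[1]}$ contains the image of the canonical embedding $\mcO_E\hookrightarrow\mfA$ modulo $\mfp_E$, and a parallel argument using elements of the form $L_a R_b$ for appropriate $a,b\in\mcO_E$ handles the remaining orbits) and is a $k_{E\cdot g(E)}$-module, equality of total dimensions forces $\mfU_{[g]}\cong k_{E\cdot g(E)}$, and the residual action of $t\in E^{\times}$ is then automatically multiplication by $\overline{tg(t)^{-1}}$, as required.
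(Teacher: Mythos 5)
Since the paper imports this result from Tam without a proof, I can only evaluate your argument on its own merits. The overall structure --- base change to $\overline{F}$ to identify the matrix units $e_{ij}$ as $E^{\times}$-eigenvectors, group them by $\Gamma_{F}$-orbits of eigencharacters so as to parametrize by $\Gamma_{E}\backslash\Gamma_{F}/\Gamma_{E}$, recognize the $E\otimes_{F}E$-module structure giving each $V_{[g]}$ the structure of a one-dimensional $E\cdot g(E)$-line, descend to $\mfU$, and close with a dimension count --- is a reasonable skeleton, and the count $\dim_{k_F}\mfU=ef^{2}=\sum_{[g]}[k_{E\cdot g(E)}:k_F]$ (using $e(E\cdot g(E)/F)=e$ in the tame setting) is correct.

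There is, however, a real gap in the final nonvanishing step. To close the dimension count you need each $\mfU_{[g]}$ to be nonzero, and the proposed $L_aR_b$ argument does not give this. For $a,b\in\mcO_{E}\subset E$ one has $t\cdot(L_aR_b)=L_{tat^{-1}}R_{t^{-1}bt}=L_aR_b$ because $E$ is commutative, so every operator of that form lies in the trivial isotype $\mfU_{[1]}$; no such element can witness $\mfU_{[g]}\neq 0$ for $[g]\neq[1]$. A second, more minor point is that the asserted $k_{E\cdot g(E)}$-module structure on $\mfU_{[g]}$ implicitly uses the tame identity $\mcO_{E}\cdot\mcO_{g(E)}=\mcO_{E\cdot g(E)}$, which should be cited rather than taken for granted. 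To repair the nonvanishing you could show directly that $\mfA\cap V_{[g]}$ and $\mfP\cap V_{[g]}$ are adjacent fractional $\mcO_{E\cdot g(E)}$-ideals, or transport the problem through the isomorphism $\mfU\cong\prod_{k=0}^{e-1}\End_{k_F}(\mfp_{E}^{k}/\mfp_{E}^{k+1})$ and compute the isotypes of each $\End_{k_F}(k_{E})$ under $k_{E}^{\times}$-conjugation together with the cyclic shift induced by conjugation by $\varpi_{E}$. As written, the proof does not reach the stated conclusion.
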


\begin{rem}\label{rem:U-decomp}
Recall that we can identify $(\Gamma_{E}\backslash\Gamma_{F}/\Gamma_{E})'$ with the set $\Gamma_{F}\backslash\Phi(\bfS,\G)$ of $\Gamma_{F}$-orbits of roots.
If a double-$\Gamma_{E}$-coset $[g]$ corresponds to a root $\alpha$ under this identification, then we can understand the action of $t\in E^{\times}$ on $\mfU_{[g]}$ as the multiplication by $\alpha(t)\in \mcO_{F_{\alpha}}^{\times}$ mod $\mfp_{F_{\alpha}}$.
In other words, $\mfU_{[g]}$ is the $\alpha|_{\bfS(F)}$-isotypic part of $\mfU$.
Moreover, if we take $\alpha$ to be $\begin{bmatrix}1\\g\end{bmatrix}$ as in Section \ref{subsec:Tam}, then $E\cdot g(E)$ is nothing but $F_{\alpha}$.
\end{rem}

We next consider an $F$-admissible character $\xi$ of $E^{\times}$.
Recall that then we can attach a sequence 
\[
F=E_{d}
\subsetneq
E_{d-1}
\subsetneq
\cdots
\subsetneq
E_{0}
\subset
E_{-1}=E
\]
of subfields and a sequence
\[
r_{d}\geq r_{d-1}>\cdots>r_{0}>r_{-1}=0
\]
of jumps to $\xi$.
Note that, in this paper, we put $E_{d}$ (not $E_{d+1}$ as in \cite{MR3509939}) to be $F$.
We put $t_{i}:=e\cdot r_{i}$ (this is the $E$-level of the $i$-th character $\xi_{i}$ for $0\leq i\leq d-1$, see Remark \ref{rem:jump}) and define integers $h_{i}$ and $j_{i}$ by
\[
h_{i}
:=
\biggl\lfloor\frac{t_{i}}{2}\biggr\rfloor+1
=
\begin{cases}
\frac{t_{i}}{2}+1& \text{if $t_{i}$ is even},\\
\frac{t_{i}+1}{2}&\text{if $t_{i}$ is odd},
\end{cases}\quad
j_{i}
:=
\biggl\lfloor\frac{t_{i}+1}{2}\biggr\rfloor
=
\begin{cases}
\frac{t_{i}}{2}& \text{if $t_{i}$ is even},\\
\frac{t_{i}+1}{2}&\text{if $t_{i}$ is odd}.
\end{cases}
\]
For each $i$, we set
\[
\mfA_{i}
:=
\{
X\in \End_{E_{i}}(E)
\mid
X\cdot\mfp_{E}^{k}\subset\mfp_{E}^{k} \text{ for any } k\in\Z
\}.
\]
We put $\mfP_{i}$ to be the Jacobson radical of $\mfA_{i}$, which is given by
\[
\mfP_{i}
=
\{
X\in\End_{E_{i}}(E)
\mid
X\cdot\mfp_{E}^{k}\subset\mfp_{E}^{k+1} \text{ for any } k\in\Z
\}.
\]
For each positive integer $k\in\Z_{>0}$, we set
\[
U_{\mfA_{i}}^{k}
:=
1+\mfP_{i}^{k}.
\]
Now we define subgroups $H^{1}$ and $J^{1}$ as follows:
\begin{align*}
H^{1}&:=U_{\mfA_{0}}^{1}U_{\mfA_{1}}^{h_{0}}\cdots U_{\mfA_{d-1}}^{h_{d-2}}U_{\mfA_{d}}^{h_{d-1}},\\
J^{1}&:=U_{\mfA_{0}}^{1}U_{\mfA_{1}}^{j_{0}}\cdots U_{\mfA_{d-1}}^{j_{d-2}}U_{\mfA_{d}}^{j_{d-1}}.
\end{align*}
Let $\mfV$ denote the quotient of $J^{1}$ by $H^{1}$.
As both of $H^{1}$ and $J^{1}$ are $E^{\times}$-stable, we can consider a decomposition of this $k_{F}$-vector space $\mfV$ into a direct sum of $E^{\times}$-isotypic parts:
\[
\mfV
=
\bigoplus_{[g]\in(\Gamma_{E}\backslash\Gamma_{F}/\Gamma_{E})'}
\mfV_{[g]}.
\]
On the other hand, by the definitions of $H^{1}$ and $J^{1}$, we have a decomposition
\[
\mfV
=\bigoplus_{i=0}^{d-1}\mfV_{i}
\]
of $\mfV$ into a direct sum of $k_{F}$-vector spaces $\mfV_{i}$, which are isomorphic to $U_{\mfA_{i+1}}^{j_{i}}/U_{\mfA_{i}}^{j_{i}}U_{\mfA_{i+1}}^{h_{i}}$.
Note that each component $\mfV_{i}$ is nonzero only when $t_{i}$ is even.
Furthermore, in this case, we can identify $\mfV_{i}$ with $\mfA_{i+1}/(\mfA_{i}+\mfP_{i+1})$ by using the fixed uniformizer $\varpi_{E}$ of $E$.
As we have
\[
\mfA_{i+1}/(\mfA_{i}+\mfP_{i+1})
\cong
\bigl((\mfA_{i+1}+\mfP)/\mfP\bigr) \big/ \bigl((\mfA_{i}+\mfP)/\mfP\bigr),
\]
we may furthermore regard $\mfV_{i}$ as a subspace of $\mfU$ which is stable under the action of $E^{\times}$.
In this sense, the $[g]$-isotypic part $\mfV_{[g]}$ of $\mfV$ can be identified with $\mfU_{[g]}$ if it is not zero.
We define $\mfW_{[g]}$ by
\[
\mfW_{[g]}:=
\begin{cases}
\mfU_{[g]}& \text{if $\mfV_{[g]}=0$,}\\
0& \text{if $\mfV_{[g]}\neq0$ (hence $\mfV_{[g]}=\mfU_{[g]}$).}
\end{cases}
\]
Thus, by definition, we have
\[
\mfU_{[g]}=\mfV_{[g]}\oplus\mfW_{[g]},
\]
and exactly one of $\mfV_{[g]}$ or $\mfW_{[g]}$ is zero.

By taking an $F$-basis of $E$, the sequence $F=E_{d}\subsetneq E_{d-1}\subsetneq\cdots\subsetneq E_{0}\subset E_{-1}=E$ gives rise to a sequence of tame twisted Levi subgroups of $\G$:
\[
\G=
\G^{d}
\supsetneq
\G^{d-1}
\supsetneq
\cdots
\supsetneq
\G^{0}
\supset
\G^{-1}=\bfS.
\]
Let us suppose that a double-$\Gamma_{E}$-coset $[g]\in\Gamma_{E}\backslash\Gamma_{F}/\Gamma_{E}$ corresponds to the $\Gamma_{F}$-orbit of a root $\alpha\in\Phi(\bfS,\G)$.
Then we can describe the condition so that $\mfV_{[g]}$ is zero (or, equivalently, $\mfW_{[g]}$ is not zero) as follows:
\begin{itemize}
\item
when $\alpha$ belongs to $\Phi(\bfS,\G^{0})$, we always have $\mfV_{[g]}=0$ and $\mfW_{[g]}=\mfU_{[g]}$, and
\item
when $\alpha$ belongs to $\Phi_{i+1}^{i}=\Phi(\bfS,\G^{i+1})-\Phi(\bfS,\G^{i})$ for $0\leq i \leq d-1$, we have
\[
\mfV_{[g]}
=
\begin{cases}
\mfU_{[g]}& \text{if $t_{i}$ is even,}\\
0&\text{if $t_{i}$ is odd,}
\end{cases}
\quad
\mfW_{[g]}
=
\begin{cases}
0 & \text{if $t_{i}$ is even},\\
\mfU_{[g]} & \text{if $t_{i}$ is odd}.
\end{cases}
\]
\end{itemize}
Note that, if $t_{i}$ is even in the latter case, $\mfV_{[g]}$ is contained in $\mfV_{i}$.
Thus $\mfV_{i}$ can be identified with
\[
\bigoplus_{\begin{subarray}{c}[g]\in(\Gamma_{E}\backslash\Gamma_{F}/\Gamma_{E})'\\ [g]\leftrightarrow\alpha\in\Phi_{i+1}^{i}\end{subarray}}\mfU_{[g]}
\]
when $t_{i}$ is even.

Now suppose that we have a finite group $\boldsymbol{\Gamma}$ and a symplectic $\F_{p}[\boldsymbol{\Gamma}]$-module $V$ (i.e., $\F_{p}[\boldsymbol{\Gamma}]$-module equipped with a $\boldsymbol{\Gamma}$-invariant non-degenerate alternating $\F_{p}$-bilinear form).
Then we can attach invariants called \textit{t-factors} $t_{\boldsymbol{\Gamma}}^{0}(V)$ and $t_{\boldsymbol{\Gamma}}^{1}(V)$;
\begin{itemize}
\item
a sign $t_{\boldsymbol{\Gamma}}^{0}(V)\in\{\pm1\}$ and
\item
a sign character $t_{\boldsymbol{\Gamma}}^{1}(V)\colon\boldsymbol{\Gamma}\rightarrow\{\pm1\}$
\end{itemize}
to $V$.
We do not recall the definitions of these objects, see \cite[Section 3.4]{MR2679700} or \cite[Section 4.2]{MR3509939}.
(We note that if $V=0$, then $t_{\boldsymbol{\Gamma}}^{0}(V)=1$ and $t_{\boldsymbol{\Gamma}}^{1}(V)$ is trivial.)
We put $t_{\boldsymbol{\Gamma}}(V)$ to be a map from $\boldsymbol{\Gamma}$ to $\{\pm1\}$ defined by taking their product:
\[
t_{\boldsymbol{\Gamma}}(V):=t_{\boldsymbol{\Gamma}}^{0}(V)\cdot t_{\boldsymbol{\Gamma}}^{1}(V)\colon \boldsymbol{\Gamma}\rightarrow\{\pm1\}.
\]
If we put
\[
\boldsymbol{\mfV}_{[g]}:=
\begin{cases}
\mfV_{[g]}\oplus\mfV_{[g^{-1}]}&\text{if $[g]$ corresponds to an asymmetric root,}\\
\mfV_{[g]}&\text{if $[g]$ corresponds to a symmetric root},
\end{cases}
\]
then it admits a symplectic $\F_{p}[E^{\times}/F^{\times}U_{E}^{1}]$-module structure (see \cite[Sections 4.2.1, 4.2.2, and 4.2.3]{MR3509939}).
In particular, we can consider t-factors for $\boldsymbol{\mfV}_{[g]}$ with 
\begin{itemize}
\item
$\boldsymbol{\Gamma}=\boldsymbol{\mu}:=\mu_{E}/\mu_{F}$ or 
\item
$\boldsymbol{\Gamma}=$ the subgroup $\varpi$ of $E^{\times}/F^{\times}U_{E}^{1}$ generated by the image of $\varpi_{E}$.
\end{itemize}

On the other hand, for an orthogonal $k_{F}[E^{\times}/F^{\times}U_{E}^{1}]$-module $W$ (i.e., $k_{F}[E^{\times}/F^{\times}U_{E}^{1}]$-module equipped with a $E^{\times}/F^{\times}U_{E}^{1}$-invariant non-degenerate symmetric $k_{F}$-bilinear form), we can attach a fourth root of unity $t(W)\in\C^{\times}$ in the manner of \cite[Sections 4.3]{MR3509939}.
(We note that if $W=0$, then $t(W)=1$.)
Since each
\[
\boldsymbol{\mfW}_{[g]}:=
\begin{cases}
\mfW_{[g]}\oplus\mfW_{[g^{-1}]}&\text{if $[g]$ corresponds to an asymmetric root,}\\
\mfW_{[g]}&\text{if $[g]$ corresponds to a symmetric root},
\end{cases}
\]
has an orthogonal $k_{F}[E^{\times}/F^{\times}U_{E}^{1}]$-module structure as explained in \cite[Section 4.3]{MR3509939}, we can consider $t(\boldsymbol{\mfW}_{[g]})$.
Here we remark that, only in the case where $[g]$ corresponds to the $\Gamma_{F}$-orbit of a symmetric ramified root, we have to make a special choice of a quadratic form on $\boldsymbol{\mfW}_{[g]}$.
In Tam's paper, this choice is implicitly specified in \cite[Remark 7.7]{MR3509939} (see also a comment in \cite[1755 page, (ii)]{MR3509939}).
Since it is necessary to compute the t-factor in this case explicitly for our purpose, we will recall the precise definition of the quadratic form and the corresponding t-factor later (see the proof of Proposition \ref{prop:main} in the symmetric ramified case in Section \ref{subsec:symmram}).

Finally, we also have to explain the relation between these notions and the ingredients used in Kaletha's construction of the local Langlands correspondence.
Thus let us take a tamely ramified elliptic maximal torus $\bfS$ of $\G=\GL_{n}$ which corresponds to the tamely ramified extension $E$ of $F$ of degree $n$.
Then, by a tamely ramified descent property of Bruhat--Tits building (\cite{MR1871292}), we can regard the Bruhat--Tits building $\mathcal{B}(\bfS,F)$ of $\bfS$ as a subset of that $\mathcal{B}(\G,F)$ for $\G$.
Moreover, as $\bfS$ is elliptic, the image of $\mathcal{B}(\bfS,F)$ in the reduced building $\mathcal{B}_{\red}(\G,F)$ consists only of one point.
We let $x$ denote this unique point.

For each $\alpha\in\Phi(\bfS,\G)$ and $r\in\R$, we put 
\[
\mfg_{\alpha}(F_{\alpha})_{r}
:=
\mfg_{\alpha}(F_{\alpha})\cap\mfg(F_{\alpha})_{x,r},
\]
where 
\begin{itemize}
\item
$\mfg_{\alpha}$ is the root subspace of $\mfg$ attached to $\alpha$ and 
\item
$\mfg(F_{\alpha})_{x,r}$ is the $r$-th Moy--Prasad filtration of $\mfg(F_{\alpha})$ at $x$.
\end{itemize}
By using this filtration on $\mfg_{\alpha}(F_{\alpha})$, we define a subset $\ord_{x}(\alpha)$ of $\R$ as follows:
\begin{defn}[{\cite[Definition 3.6]{MR3849622}}]\label{defn:ord}
We put
\[
\ord_{x}(\alpha)
:=
\{r\in\R
\mid
\mfg_{\alpha}(F_{\alpha})_{r+}\subsetneq\mfg_{\alpha}(F_{\alpha})_{r}\}.
\]
\end{defn}

The following proposition is encoded in the work of Adler--Spice (\cite{MR2543925}) and DeBacker--Spice (\cite{MR3849622}), especially the proofs of \cite[Proposition 3.8]{MR2543925} and \cite[Proposition 4.21]{MR3849622}.
For the sake of completeness, we give an explanation (in the following proof, we temporarily use the notation of \cite{MR2543925, MR3849622}):
\begin{prop}\label{prop:Heisenberg}
Let $x\in\mathcal{B}_{\red}(\G,F)$ be the point determined by $\bfS$.
We take a double-$\Gamma_{E}$-coset $[g]$ and a root $\alpha\in\Phi(\bfS,\G)$ whose $\Gamma_{F}$-orbit corresponds to $[g]$.
We put $i$ to be the unique index $-1\leq i\leq d-1$ such that $\alpha\in\Phi_{i+1}^{i}$.
When $i=-1$, we have $\mathfrak{V}_{[g]}=0$.
When $0\leq i\leq d-1$, we have $\mathfrak{V}_{[g]}\neq0$ (recall that this is equivalent to that $t_{i}=e\cdot r_{i}$ is even) if and only if we have $\frac{r_{i}}{2}\in \ord_{x}(\alpha)$.
\end{prop}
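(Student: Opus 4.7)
The plan is to reduce both sides of the equivalence to the single arithmetic condition that $t_{i}:=e r_{i}$ is even, and then carry out a direct Moy--Prasad computation on the one-dimensional root space $\mfg_{\alpha}(F_{\alpha})$.

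First I would dispose of the case $i=-1$: by the construction of $\mfV=J^{1}/H^{1}$ we have the decomposition $\mfV=\bigoplus_{i=0}^{d-1}\mfV_{i}$, and each $\mfV_{i}$ receives contributions only from double cosets $[g]$ that correspond to roots in $\Phi_{i+1}^{i}$. Consequently, if $\alpha\in\Phi(\bfS,\G^{0})$ then $\mfV_{[g]}=0$, as asserted.

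For $0\leq i\leq d-1$, Section \ref{subsec:Heisen} already shows that $\mfV_{[g]}=\mfU_{[g]}$ when $t_{i}$ is even and $\mfV_{[g]}=0$ otherwise. The remaining work is therefore the self-contained root-theoretic statement
\[
\frac{r_{i}}{2}\in\ord_{x}(\alpha) \iff t_{i}=er_{i}\text{ is even,}
\]
and for this it suffices to prove the cleaner identity $\ord_{x}(\alpha)=\frac{1}{e}\Z$. To establish this, I would choose a root vector $X_{\alpha}\in\mfg_{\alpha}(F_{\alpha})$ whose image in $\mfA/\mfP$ is a nonzero element of $\mfU_{[g]}$; such an $X_{\alpha}$ exists since $\mfU_{[g]}\cong k_{F_{\alpha}}\neq 0$ by Proposition \ref{prop:U-decomp}. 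Then $X_{\alpha}$ lies in $\mfg(F_{\alpha})_{x,0}\setminus\mfg(F_{\alpha})_{x,0+}$, so $\mfg_{\alpha}(F_{\alpha})=F_{\alpha}\cdot X_{\alpha}$ carries the Moy--Prasad filtration induced from $\val_{F}$ on $F_{\alpha}$, and its jumps occur precisely at the elements of $\val_{F}(F_{\alpha}^{\times})$. Since $F_{\alpha}=E\cdot g(E)$ is contained in $L$, which is tamely ramified over $F$ with ramification index $e$ and admits the uniformizer $\varpi_{E}\in E\subset F_{\alpha}$, one obtains $e(F_{\alpha}/F)=e$ and hence $\val_{F}(F_{\alpha}^{\times})=\frac{1}{e}\Z$, as desired.

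The main obstacle is the bookkeeping required to align the hereditary-order description of Moy--Prasad filtrations (via $\mfA$ and $\mfP$) with the intrinsic root-space description used by DeBacker--Spice in \cite{MR3849622}: one must verify that the $E^{\times}$-eigenspace decomposition $\mfA/\mfP=\bigoplus_{[g]}\mfU_{[g]}$ genuinely records the depth-zero quotients of the corresponding root spaces, so that a nonzero class in $\mfU_{[g]}$ indeed lifts to a depth-zero root vector in $\mfg_{\alpha}(F_{\alpha})$. This compatibility is implicit in the proofs of \cite[Proposition 3.8]{MR2543925} and \cite[Proposition 4.21]{MR3849622}, but needs to be spelled out carefully when translating between Tam's and Adler--DeBacker--Spice's languages; once it is in place, the valuation computation above completes the argument.
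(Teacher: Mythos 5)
Your plan is correct, and it takes a genuinely different route from the paper's own proof. Rather than directly producing an isomorphism between the Adler--Spice space $V_{\alpha}:=\mfg_{\alpha}(F_{\alpha})_{x,\frac{r_{i}}{2}}/\mfg_{\alpha}(F_{\alpha})_{x,\frac{r_{i}}{2}+}$ and $\mfV_{[g]}$ at the specific depth $\frac{r_{i}}{2}$ (which is what the paper does), you isolate the uniform statement $\ord_{x}(\alpha)=\frac{1}{e}\Z$ and then reduce the equivalence to the parity check $\frac{r_{i}}{2}\in\frac{1}{e}\Z\Leftrightarrow t_{i}=er_{i}$ even. That is cleaner as a final step, and it correctly packages the ramification computation $e(F_{\alpha}/F)=e$ that you carry out via $E\subset F_{\alpha}\subset L$. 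What your route buys is a single depth-independent claim in place of a family of depth-$\frac{r_i}{2}$ identifications; what it does not save you is the essential work, which you have (candidly) relegated to the last paragraph: the statement that a nonzero class in $\mfU_{[g]}$ lifts to a depth-zero root vector in $\mfg_{\alpha}(F_{\alpha})$ is exactly a depth-zero instance of the tame Galois descent plus Broussous--Lemaire comparison that the paper spells out in full (via the chain $V_{\alpha}\cong$ $\alpha|_{\bfS(F)}$-isotypic part of $\Lie\G^{i+1}_{x,\frac{r_{i}}{2}}/(\Lie\G^{i}_{x,\frac{r_{i}}{2}}+\Lie\G^{i+1}_{x,\frac{r_{i}}{2}+})\cong\mfP_{i+1}^{j_{i}}/(\mfP_{i}^{j_{i}}+\mfP_{i+1}^{h_{i}})\cong\mfV_{[g]}$). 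So the proposal is sound and more economical in its arithmetic endgame, but the ``bookkeeping'' you flag is precisely the content of the paper's proof, and a complete write-up of your version would still have to reproduce that identification (at depth zero, rather than at depth $\frac{r_{i}}{2}$). One small point worth making explicit if you flesh this out: once you have a depth-zero root vector $X_{\alpha}$, the fact that $\mfg_{\alpha}(F_{\alpha})_{x,r}=(F_{\alpha})_{r}\cdot X_{\alpha}$ — i.e., that scaling by $c\in F_{\alpha}^{\times}$ shifts the Moy--Prasad filtration on the root line by $\val_{F}(c)$ — again rests on the Broussous--Lemaire identification $\Lie\G_{x,r}\cong\mfP^{\lceil er\rceil}$, so it is not free; it is the same ingredient surfacing in a different guise.
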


\begin{proof}
Since the first assertion was already explained in this section, we prove the second assertion by assuming that $i$ is not equal to $-1$.

By the definition of $\ord_{x}(\alpha)$, we have $\frac{r_{i}}{2}\in\ord_{x}(\alpha)$ if and only if the quotient $\mfg_{\alpha}(F_{\alpha})_{x,\frac{r_{i}}{2}}/\mfg_{\alpha}(F_{\alpha})_{x,\frac{r_{i}}{2}+}$ is not zero.
Let $L$ be the splitting field of $\bfS$.
Note that the $V_{\alpha}$ in \cite[Proof of Proposition 3.8]{MR2543925}, which is the $\Gamma_{\alpha}$-fixed part of the quotient $\mfg_{\alpha}(L)_{x,\frac{r_{i}}{2}:\frac{r_{i}}{2}+}:=\mfg_{\alpha}(L)_{x,\frac{r_{i}}{2}}/\mfg_{\alpha}(L)_{x,\frac{r_{i}}{2}+}$ by definition, is nothing but the quotient $\mfg_{\alpha}(F_{\alpha})_{x,\frac{r_{i}}{2}:\frac{r_{i}}{2}+}:=\mfg_{\alpha}(F_{\alpha})_{x,\frac{r_{i}}{2}}/\mfg_{\alpha}(F_{\alpha})_{x,\frac{r_{i}}{2}+}$ by the compatibility of taking the quotient with tamely ramified Galois descent (\cite[Corollary 2.3]{MR1824988}).
Thus it suffices to show that $V_{\alpha}$ is isomorphic to $\mfV_{[g]}$.

As explained in the first paragraph of \cite[Proof of Proposition 3.8]{MR2543925}, we can identify $V_{\alpha}$ with the $\alpha|_{\bfS(F)}$-isotypic part of the $\Gamma_{F}$-fixed part of 
\[
\Lie(\G^{i},\G^{i+1})(L)_{x,(r_{i},\frac{r_{i}}{2}):(r_{i},\frac{r_{i}}{2}+)}
\]
by the map 
\[
X_{\alpha}\mapsto\sum_{\tau\in\Gamma_{F}/\Gamma_{\alpha}} \tau(X_{\alpha})
\]
(note that this isomorphism is $\bfS(F)$-equivariant).
Again by the tamely ramified descent property of taking the quotient (\cite[Corollary 2.3]{MR1824988}), the latter space is identified with the $\alpha|_{\bfS(F)}$-isotypic part of $\Lie(\G^{i},\G^{i+1})(F)_{x,(r_{i},\frac{r_{i}}{2}):(r_{i},\frac{r_{i}}{2}+)}$, which is furthermore isomorphic to $\Lie\G^{i+1}_{x,\frac{r_{i}}{2}}/(\Lie\G^{i}_{x,\frac{r_{i}}{2}}+\Lie\G^{i+1}_{x,\frac{r_{i}}{2}+})$.

On the other hand, by definition, $\mfV_{[g]}$ is the $\alpha|_{\bfS(F)}$-isotypic part of the group $U_{\mfA_{i+1}}^{j_{i}}/U_{\mfA_{i}}^{j_{i}}U_{\mfA_{i+1}}^{h_{i}}$, which is isomorphic to $\mfP_{i+1}^{j_{i}}/(\mfP_{i}^{j_{i}}+\mfP_{i+1}^{h_{i}})$.
Recall that we identify $\Lie\G=\mathfrak{gl}_{n}$ with $\End_{F}(E)$ by taking an $F$-basis of $E$.
Under this identification, the comparison result of Broussous--Lemaire on the lattice filtrations and Moy--Prasad filtrations (\cite{MR1888474}) gives
\[
\Lie\G^{i}_{x,r}
\cong
\mfP_{i}^{\lceil er\rceil}
\]
(recall that $e$ is the ramification index of $E/F$) for any $r\in\R$.
Hence we have an isomorphism
\[
\Lie\G^{i+1}_{x,\frac{r_{i}}{2}}\big/\bigl(\Lie\G^{i}_{x,\frac{r_{i}}{2}}+\Lie\G^{i+1}_{x,\frac{r_{i}}{2}+}\bigr)
\cong
\mfP_{i+1}^{j_{i}}/(\mfP_{i}^{j_{i}}+\mfP_{i+1}^{h_{i}}),
\]
which is $\bfS(F)$-equivariant.
Thus, in particular, $V_{\alpha}$ is isomorphic to $\mfV_{[g]}$.
\end{proof}

\section{Comparison of $\chi$-data}\label{sec:main}

\subsection{Main theorem}\label{subsec:main}

\begin{thm}\label{thm:main}
We assume that the residual characteristic $p$ is odd.
Then Kaletha's and Harris--Taylor's local Langlands correspondences coincide for regular supercuspidal representations of $\GL_{n}(F)$.
That is, for every regular supercuspidal representation $\pi$ of $\GL_{n}(F)$, we have
\[
\LLC_{\G}^{\Kal}(\pi)
=
\LLC_{\G}^{\HT}(\pi).
\]
\end{thm}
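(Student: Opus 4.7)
The plan is to reduce the theorem to a root-by-root comparison of $\chi$-data and then carry out a case-by-case computation using the classification in Section~\ref{subsec:root}. Let $\pi$ be a regular supercuspidal representation of $\GL_{n}(F)$ corresponding, via the bijections summarized in Section~\ref{subsec:rsc}, to a tame elliptic regular pair $(\bfS,\xi)$ of $\G=\GL_{n}$ and to an $F$-admissible pair $(E,\xi)$, so that $\pi\cong\pi^{\KY}_{(\bfS,\xi)}\cong\pi^{\BH}_{(E,\xi)}$. By Corollary~\ref{cor:BH} we have $\LLC^{\HT}_{\G}(\pi)=\Ind_{W_{E}}^{W_{F}}(\xi\mu_{\chi_{\Tam}}^{-1})$, while Proposition~\ref{prop:Kaletha} gives $\LLC^{\Kal}_{\G}(\pi)=\Ind_{W_{E}}^{W_{F}}(\epsilon^{-1}\xi\mu_{\chi_{\Kal}})$. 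Hence it suffices to prove the character identity
\[
\mu_{\chi_{\Kal}}\cdot\mu_{\chi_{\Tam}}=\epsilon \quad \text{on } E^{\times}.
\]

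Using Proposition~\ref{prop:toral}, the factor $\epsilon_{f,\ram}$ is trivial, so $\epsilon=\epsilon^{\sym}\cdot\epsilon_{\sym,\ur}$. Since all of $\mu_{\chi_{\Kal}}$, $\mu_{\chi_{\Tam}}$, $\epsilon^{\sym}$ and $\epsilon_{\sym,\ur}$ are defined as products over $\Gamma_{F}$-orbits (or $\Gamma_{F}\times\{\pm1\}$-orbits for asymmetric roots) of $\Phi(\bfS,\G)$, the identity decouples into one identity per orbit $[\alpha]$, which I will verify by walking through the classification of Section~\ref{subsec:root}. Concretely, representatives of orbits can be taken of the form $\alpha=\begin{bmatrix}1\\ g\end{bmatrix}$ with $g=\sigma^{k}\phi^{i}$, and four cases arise:
\begin{itemize}
\item $\alpha$ is asymmetric;
\item $\alpha$ is symmetric unramified in both the ADS and Tam senses, i.e., $g=\sigma^{k}\phi^{f/2}$;
\item $\alpha$ is symmetric unramified in the ADS sense but ramified in Tam's sense, i.e., $g=\sigma^{k}$ with $k\neq e/2$;
\item $\alpha$ is symmetric ramified (in both senses), i.e., $g=\sigma^{e/2}$ (this orbit exists precisely when $e$ is even, and is unique by Proposition~\ref{prop:symmram}).
\end{itemize}

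For each case I would first spell out the restrictions $\chi_{\Kal,\alpha}|_{E^{\times}}$, $\chi_{\Tam,\alpha}|_{E^{\times}}$ and (for symmetric roots) $\epsilon_{\alpha}|_{E^{\times}}$, using the definitions recalled in Section~\ref{subsec:BH} and Section~\ref{subsec:epsilon}, and then compare them. The asymmetric case is easiest: both Kaletha's and Tam's recipes choose $\chi_{\alpha}$ to be essentially a tame unramified character attached to the Howe factor of $\xi$ at the depth interval containing $\alpha$, and $\epsilon^{\sym}$ contributes nothing to such orbits, so the identity reduces to the fact that Kaletha's $\chi_{\Kal,\alpha}$ and Tam's $\chi_{\Tam,\alpha}$ are mutually inverse on $E^{\times}$, which follows by unwinding definitions. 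The symmetric unramified case (both senses) requires identifying both sides with tame characters of $E^{\times}$ whose restriction to the appropriate norm-$1$ subgroup of $k_{F_{\alpha}}^{\times}$ is quadratic; the Jacobi-symbol factor in $\epsilon_{\alpha}$ accounts precisely for the discrepancy dictated by the ord-condition $\frac{r_{i}}{2}\in\ord_{x}(\alpha)$, and Proposition~\ref{prop:Heisenberg} is what ties this ord-condition to the Heisenberg $\mathfrak{V}_{[g]}\neq 0$ condition that controls Tam's $t$-factor.

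The hard part is the remaining two cases, where Tam's notion of ramifiedness disagrees either with ADS's or where the symmetric ramified orbit must be handled directly. In the case $g=\sigma^{k}$ with $k\neq e/2$ (ADS-unramified, Tam-ramified), Tam's $\chi_{\Tam,\alpha}$ is built from a Gauss-sum-type t-factor for $\boldsymbol{\mfW}_{[g]}$ in its orthogonal incarnation, while Kaletha's $\chi_{\Kal,\alpha}$ is built by quadratic restriction to $F_{\pm\alpha}^{\times}$ and a minimally ramified extension; comparing the two requires using the Howe-factorization descent (Proposition~\ref{prop:Howe-rest}) to locate which jump $r_{i}$ governs the root, and matching this with the Jacobi-symbol-on-$k_{F_{\alpha}}^{1}$ factor of $\epsilon_{\sym,\ur,\alpha}$ via a Gauss-sum identity ($\mfn(\psi)^{2}=\bigl(\frac{-1}{q_{k}}\bigr)$) applied to the residue field of $F_{\alpha}$. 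The symmetric ramified case $g=\sigma^{e/2}$ is the most delicate: here $\epsilon_{\alpha}$ contributes nothing (by Proposition~\ref{prop:toral}), so I must show $\chi_{\Kal,\alpha}|_{E^{\times}}\cdot\chi_{\Tam,\alpha}|_{E^{\times}}=1$. This requires explicitly computing Tam's $t$-factor $t(\boldsymbol{\mfW}_{[g]})$ with the specific quadratic form alluded to after Remark~\ref{rem:U-decomp} in Tam's paper, and matching it against Kaletha's Langlands-constant-type formula. I expect this final case-by-case comparison, specifically the symmetric ramified case and the bookkeeping of signs arising from Langlands constants $\lambda_{F_{\alpha}/F_{\pm\alpha}}$, to be the main technical obstacle; everything else reduces to algebraic manipulations of tame characters, Jacobi symbols and normalized Gauss sums as catalogued in Section~\ref{sec:notation}.
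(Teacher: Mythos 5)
Your high-level reduction agrees with the paper: write $\pi\cong\pi^{\KY}_{(\bfS,\xi)}\cong\pi^{\BH}_{(E,\xi)}$, invoke Corollary~\ref{cor:BH} and Proposition~\ref{prop:Kaletha}, kill $\epsilon_{f,\ram}$ via Proposition~\ref{prop:toral}, and reduce to a per-orbit identity $\mu_{\chi_{\Kal}}\cdot\mu_{\chi_{\Tam}}=\epsilon$ that one verifies case-by-case. Splitting the ADS-symmetric-unramified case into the sub-cases $g=\sigma^{k}$ and $g=\sigma^{k}\phi^{f/2}$ (Tam-ramified versus Tam-unramified) also matches how the paper's Propositions~\ref{prop:symmunrammu} and~\ref{prop:symmunramvarpi} are actually proved.

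However, your description of the asymmetric case is wrong, and it is not just an imprecision: it misstates the nontrivial content of that case. You assert that Kaletha's and Tam's recipes both attach ``a tame unramified character'' to asymmetric $\alpha$, that $\epsilon^{\sym}$ contributes nothing on such orbits, and that consequently the identity reduces to $\chi_{\Kal,\alpha}$ and $\chi_{\Tam,\alpha}$ being mutually inverse on $E^{\times}$. In fact Kaletha takes $\chi_{\Kal,\alpha}$ to be the \emph{trivial} character of $F_{\alpha}^{\times}$ for every asymmetric root, whereas Tam's $\chi_{\Tam,\alpha}\cdot\chi_{\Tam,\alpha'}$ restricted to $E^{\times}$ is a genuine sign character built from permutation signatures on $\mfV_{[g]}$. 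And $\epsilon^{\sym}$ is, by its very definition, the product of the $\epsilon_{\alpha}$ \emph{over} the $\Gamma_{F}\times\{\pm1\}$-orbits of asymmetric roots (the superscript in $\Phi(\bfS,\G)^{\sym}$ marks the asymmetric roots); it is precisely the asymmetric-orbit contribution to $\epsilon$, not something that vanishes there. The identity one must prove on an asymmetric pair is therefore
\[
(\chi_{\Tam,\alpha}\cdot\chi_{\Tam,\alpha'})|_{E^{\times}}=\epsilon_{\alpha},
\]
which, after using Proposition~\ref{prop:Heisenberg} to match the condition $\tfrac{r_{i}}{2}\in\ord_{x}(\alpha)$ with $\mfV_{[g]}\neq0$, comes down to identifying the signature of the multiplication action of $\zeta\in k_{F_{\alpha}}^{\times}$ on $\mfV_{[g]}$ with the Jacobi symbol $\bigl(\tfrac{\zeta}{k_{F_{\alpha}}^{\times}}\bigr)$. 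This is a real computation, not an unwinding of definitions, and your sketch as written would pass over it. The rest of your proposal is a reasonable outline, but the asymmetric step needs to be replaced by this signature-versus-Jacobi-symbol argument before the sketch could be completed.
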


\begin{proof}
Let $\pi$ be a regular supercuspidal representation of $\GL_{n}(F)$ arising from a tame elliptic regular pair $(\bfS,\xi)$, i.e., $\pi\cong\pi_{(\bfS,\xi)}^{\KY}$.
Here if we let $E$ be a finite tamely ramified extension of $F$ of degree $n$ such that $\bfS$ is isomorphic to $\Res_{E/F}\Gm$, then we have $\pi_{(\bfS,\xi)}^{\KY}\cong\pi_{(E,\xi)}^{\BH}$ (see Section \ref{subsec:rsc} and Appendix \ref{sec:BH-Kal}).
Then, by Corollary \ref{cor:BH} and Proposition \ref{prop:Kaletha}, we have
\[
\LLC_{\G}^{\HT}(\pi)=\Ind_{W_{E}}^{W_{F}}(\xi\mu_{\chi_{\Tam}}^{-1})
\quad\text{and}\quad
\LLC_{\G}^{\Kal}(\pi)=\Ind_{W_{E}}^{W_{F}}(\epsilon^{-1}\xi\mu_{\chi_{\Kal}}).
\]
Thus it suffices to show the equality
\[
\mu_{\chi_{\Tam}}^{-1}
=
\epsilon^{-1}\mu_{\chi_{\Kal}}
\]
of characters on $E^{\times}$.
Here let us recall that each character appearing in this equality is defined as follows:
\begin{itemize}
\item
For a set $\chi$ of $\chi$-data, $\mu_{\chi}$ is defined to be a character of $E^{\times}$ given by
\[
\mu_{\chi}:=
\prod_{[\alpha]\in\Gamma_{F}\backslash\Phi(\bfS,\G)} \chi_{\alpha}|_{E^{\times}},
\]
where, for each $[\alpha]\in\Gamma_{F}\backslash\Phi(\bfS,\G)$, we take its representative $\alpha\in\Phi(\bfS,\G)$ to be of the form $\begin{bmatrix}1\\g\end{bmatrix}$ for some (unique) $g\in\mathcal{D}$ (see Section \ref{subsec:Tam}).
\item
The character $\epsilon$ is defined as the product $\epsilon^{\sym}\cdot\epsilon_{\sym,\ur}\cdot\epsilon_{f,\ram}$, where
\begin{itemize}
\item
$\epsilon^{\sym}$ is the product of $\epsilon_{\alpha}$ over the set of $\Gamma_{F}\times\{\pm1\}$-orbits of asymmetric roots, 
\item
$\epsilon_{\sym,\ur}$ is the product of $\epsilon_{\alpha}$ over the set of $\Gamma_{F}$-orbits of symmetric unramified roots, and
\item
$\epsilon_{f,\ram}$ is the contribution of symmetric ramified roots, which is in fact trivial when specialized to $\GL_{n}$ (Proposition \ref{prop:toral}).
\end{itemize}
\end{itemize}
Therefore the above equality follows from the following proposition.
\end{proof}

\begin{prop}\label{prop:main}
Let $(\bfS,\xi)$ be a tame elliptic regular pair of $\GL_{n}$ and $E$ a finite tamely ramified extension of $F$ of degree $n$ corresponding to $\bfS$.
Let $\alpha\in\Phi(\bfS,\G)$ be a root of the form $\begin{bmatrix}1\\g\end{bmatrix}$ for some $g\in\mathcal{D}$.
When $\alpha$ is asymmetric, we put $\alpha'$ to be the root $\begin{bmatrix}1\\g'\end{bmatrix}$ (for some $g'\in\mathcal{D}$) whose $\Gamma_{F}$-orbit contains $-\alpha$.
Then we have
\[
\begin{cases}
(\chi_{\Tam,\alpha}\cdot\chi_{\Tam,\alpha'})|_{E^{\times}}^{-1}=\epsilon_{\alpha}^{-1}\cdot(\chi_{\Kal,\alpha}\cdot\chi_{\Kal,\alpha'})|_{E^{\times}} & \text{if $\alpha\in\Phi(\bfS,\G)^{\sym}$},\\
\chi_{\Tam,\alpha}|_{E^{\times}}^{-1}=\epsilon_{\alpha}^{-1}\cdot\chi_{\Kal,\alpha}|_{E^{\times}} & \text{if $\alpha\in\Phi(\bfS,\G)_{\sym,\ur}$},\\
\chi_{\Tam,\alpha}|_{E^{\times}}^{-1}=\chi_{\Kal,\alpha}|_{E^{\times}} & \text{if $\alpha\in\Phi(\bfS,\G)_{\sym,\ram}$}.
\end{cases}
\]
\end{prop}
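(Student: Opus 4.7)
The plan is to prove the three identities by a uniform case-by-case analysis following the classification of roots in Section~\ref{subsec:root}. For each root $\alpha$ the recipe is: (i) recall the explicit formulas for $\chi_{\Tam,\alpha}$ and $\chi_{\Kal,\alpha}$ (to be spelled out in Sections~\ref{subsec:asymm}--\ref{subsec:symmram}); (ii) restrict both to $E^{\times}\cong\bfS(F)$ using the identification $\alpha|_{E^{\times}}(t)=t/g(t)$ for $\alpha=\begin{bmatrix}1\\g\end{bmatrix}$; (iii) read off $\epsilon_{\alpha}$ from Section~\ref{subsec:epsilon}, translating the condition $\frac{r_{i}}{2}\in\ord_{x}(\alpha)$ into the non-vanishing of $\mathfrak{V}_{[g]}$ via Proposition~\ref{prop:Heisenberg}; and (iv) compare both sides on a convenient generating set of $E^{\times}$ (roots of unity, principal units, and a uniformizer).

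In the asymmetric case, the Kaletha-side restriction to $E^{\times}$ is essentially trivial apart from a Jacobi-symbol contribution $\Jac{\overline{\alpha(\cdot)}}{k_{F_{\alpha}}^{\times}}$ that appears exactly when $t_{i}=e\cdot r_{i}$ is even, while the Tam-side is governed by the $t$-factors $t_{\boldsymbol{\mu}}(\boldsymbol{\mathfrak{V}}_{[g]})$, $t_{\varpi}(\boldsymbol{\mathfrak{V}}_{[g]})$, and $t(\boldsymbol{\mathfrak{W}}_{[g]})$. I would evaluate the latter using the explicit description of $\mathfrak{V}_{[g]}$ and $\mathfrak{W}_{[g]}$ from Proposition~\ref{prop:U-decomp} and Remark~\ref{rem:U-decomp}, reducing the required equality to an elementary sign computation together with Proposition~\ref{prop:Heisenberg}.

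For the symmetric unramified case (in the ADS sense), Proposition~\ref{prop:symmur} splits the analysis into the Tam-unramified subcase $g=\sigma^{k}\phi^{f/2}$ and the Tam-ramified subcase $g=\sigma^{k}$ with $k\ne\tfrac{e}{2}$. In the first subcase both characters carry the quadratic character of $k_{F_{\alpha}}^{1}$ and the matching proceeds by using the surjectivity of $\Nr_{E/F_{\pm\alpha}}$ between appropriate filtration quotients (Remark~\ref{rem:norm}). In the second subcase Tam's formula mixes a Langlands constant $\lambda_{F_{\alpha}/F_{\pm\alpha}}$ with a Gauss-sum factor coming from $t(\boldsymbol{\mathfrak{W}}_{[g]})$, while Kaletha's character is built from the quadratic character of $k_{F_{\alpha}}^{1}$; the match reduces to the identity $\mathfrak{n}(\psi)^{2}=\Jac{-1}{k^{\times}}$ together with standard properties of Langlands constants for tamely ramified quadratic extensions. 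Finally, the symmetric ramified case exists only for $g=\sigma^{e/2}$ by Proposition~\ref{prop:symmram} and is ramified in both senses; since $\epsilon_{f,\mathrm{ram}}=\mathbbm{1}$ by Proposition~\ref{prop:toral} and $\epsilon^{\sym}$, $\epsilon_{\sym,\ur}$ do not involve $\alpha$, the required identity collapses to $\chi_{\Tam,\alpha}|_{E^{\times}}^{-1}=\chi_{\Kal,\alpha}|_{E^{\times}}$, and both restrictions manifestly involve the quadratic character attached to the ramified extension $F_{\alpha}/F_{\pm\alpha}$; matching the wild part requires making the implicit quadratic form on $\boldsymbol{\mathfrak{W}}_{[g]}$ referenced in \cite[Remark~7.7]{MR3509939} explicit and feeding it into the definition of $t(\boldsymbol{\mathfrak{W}}_{[g]})$.

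The main obstacle I anticipate is the Tam-ramified sub-subcase of the symmetric unramified case, $g=\sigma^{k}$ with $k\ne\tfrac{e}{2}$: here the two sides are least manifestly similar (Gauss sums and Langlands constants on one side against a single Jacobi symbol on the other), and the competing conventions of Tam and Adler--DeBacker--Spice for ``ramified'' have to be carefully unravelled before any cancellation becomes visible; verifying the identity on a uniformizer of $E$ should rely on genuinely arithmetic identities rather than pure symbol manipulation. The asymmetric and symmetric ramified cases should be comparatively mechanical once the $t$-factor bookkeeping and the triviality of the toral invariant are in hand.
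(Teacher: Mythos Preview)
Your overall architecture---split by root type, translate the condition $\tfrac{r_i}{2}\in\ord_x(\alpha)$ into $\mathfrak{V}_{[g]}\neq 0$ via Proposition~\ref{prop:Heisenberg}, then check equality on $\mu_E$, $1+\mfp_E$, and $\varpi_E$---matches the paper's. But your identification of where the nontrivial work lies is off, and this reflects a genuine misreading of the two $\chi$-data.

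First, in the symmetric unramified (ADS) case you have the ingredients swapped. Kaletha's $\chi_{\Kal,\alpha}$ is simply the nontrivial \emph{unramified} quadratic character of $F_\alpha^\times$; the quadratic character of $k_{F_\alpha}^1$ you mention is $\epsilon_\alpha$, not $\chi_{\Kal,\alpha}$. And Tam's formula for this case (coming from his Theorem~7.1~(i)) involves only the $t$-factors $t^0_{\boldsymbol{\mu}}(\mfV_{[g]}^\varpi)$, $t_\varpi(\mfV_{[g]})$, $t(\mfW_{[g]})$---no Langlands constant. The Tam-ramified sub-subcase $g=\sigma^k$ with $k\neq\tfrac{e}{2}$ that you flag as the main obstacle is in fact relatively benign: on $\mu_E$ one has $\alpha(\zeta)=1$ so both sides are trivial, and on $\varpi_E$ the computation reduces to the Jacobi-symbol identity $\Jac{\beta}{k_{F_\alpha}^1}=\Jac{\beta}{\F_p[\beta]^1}$ for $\beta=\zeta_e^k$, which follows from an elementary congruence once one observes $[k_{F_\alpha}:\F_p[\beta]]$ is odd. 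Neither Gauss sums nor Langlands constants enter.

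Second, and more seriously, you underestimate the symmetric ramified case $g=\sigma^{e/2}$. This is where the real arithmetic content lives, and it is not mechanical. The Langlands constant $\lambda_{E/F_{\pm\alpha}}$ appears here, in \emph{Kaletha's} $\chi$-data, through the normalization $\chi_{\Kal,\alpha}(2a_\alpha)=\lambda_{E/F_{\pm\alpha}}$, where $a_\alpha\in E_{-r}$ is the element determined by $\phi_{\bfi}$ via the identity $\phi_{\bfi}(\Nr_{\bfS(E)/\bfS(F)}\circ\alpha^\vee(X+1))=\psi_E(\bar a_\alpha X)$. On Tam's side, $\chi_{\Tam,\alpha}(\varpi_E)=t(\mfW_{[g]})$ is given by a Gauss sum twisted by $\Jac{\zeta^{(\bfi')}(\varpi_E)}{k_E^\times}$, where $\zeta^{(\bfi')}(\varpi_E)$ comes from a Howe factorization of $\xi$ with respect to a smaller Levi $\bfH\cong\Res_{K_{l-1}/F}\GL_{2m}$. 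The step you are missing is the bridge between these two objects: one must show $\bar a_\alpha\equiv 2\varpi_E^{-(2s+1)}\zeta^{(\bfi')}(\varpi_E)$ in $E_{-r}/E_{-r+}$. This requires (a) descending the Howe factorization from $\G$ to $\bfH$ (the content of Proposition~\ref{prop:Howe-rest}, which you do not invoke), (b) computing $\Nr_{\bfS(E)/\bfS(F)}\circ\alpha^\vee$ explicitly as $x\mapsto x\cdot\sigma^{e/2}(x)^{-1}$, and (c) an explicit discriminant computation for the quadratic form $\mathbf{q}^{(\bfi')}_{K_{l-1}}|_{\mfW_{[g]}}$ from Tam's Remark~7.7, carried out via a concrete matrix model of $\End_{K_{l-1}}(E)$. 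Only after all three pieces are in place do the Gauss sum and Langlands constant cancel correctly. Your proposal mentions the quadratic form but not the descent of the Howe factorization or the identification of $a_\alpha$, and without these the comparison cannot be completed.
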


We will prove this proposition by a case-by-case computation according to whether a root $\alpha$ is symmetric or asymmetric (and furthermore whether unramified or ramified when it is symmetric). 
In the rest of this section, we fix a tame elliptic regular pair $(\bfS,\xi)$ of $\GL_{n}$.
We let $E$ be the finite tamely ramified extension of $F$ of degree $n$ which corresponds to $\bfS$.
Moreover, according to Tam (see \cite[1735 page, (5.11)]{MR3509939}), we add the following condition to our fixed uniformizer $\varpi_{E}$:
\[
\varpi_{E}\in E_{0},
\]
where $E_{0}$ is the subfield of $E$ appearing in a Howe factorization of $(\bfS,\xi)$ (in the sense of Howe and Moy, see Section \ref{subsec:Howe}).
Note that we can always assume this condition since $E$ is unramified over $E_{0}$ (see, e.g., \cite[Section 5.1]{MR3509939}).

\subsection{Asymmetric roots}\label{subsec:asymm}
Let $\alpha =\begin{bmatrix}1 \\g\end{bmatrix}\in \Phi(\bfS,\G)^{\sym}$ be an asymmetric root.
We take $\alpha'$ as in Proposition \ref{prop:main}.
We recall the definitions of the terms on both sides of the equality in Proposition \ref{prop:main}.
\begin{description}
	\item[Kaletha's $\chi$-data at $\alpha$] 
	We take $\chi_{\Kal,\alpha}$ to be the trivial character of $F_{\alpha}^{\times}$.	
\item[Tam's $\chi$-data at $\alpha$] 
Since what we need is only the restriction of $\chi_{\Tam,\alpha}\cdot\chi_{\Tam,\alpha'}$ to $E^{\times}$, here we recall the value of $\chi_{\Tam,\alpha}\cdot\chi_{\Tam,\alpha'}$ at each element of $E^{\times}$: 
\begin{itemize}
\item
$\chi_{\Tam,\alpha}\cdot\chi_{\Tam,\alpha'}$ is trivial on $1+\mfp_{E}$,
\item
$(\chi_{\Tam,\alpha}\cdot\chi_{\Tam,\alpha'})(\gamma)=\sgn_{\gamma}(\mfV_{[g]})\cdot\sgn_{\gamma}(\mfV_{[g^{-1}]})$ for any $\gamma\in\mu_{E}$, and
\item
$(\chi_{\Tam,\alpha}\cdot\chi_{\Tam,\alpha'})(\varpi_{E})=t_{\varpi}(\boldsymbol{\mfV}_{[g]})(\varpi_{E})$.
\end{itemize}
Here $\sgn_{\gamma}(\mfV_{[g]})$ (resp.\ $\sgn_{\gamma}(\mfV_{[g^{-1}]})$) is the signature of the permutation of $\mfV_{[g]}$ (resp.\ $\mfV_{[g^{-1}]}$) given by the multiplication by $\gamma$ (recall that we have identifications $\mfV_{[g]}\cong k_{F_{\alpha}}$ and $\mfV_{[g^{-1}]}\cong k_{F_{\alpha'}}$, see Proposition \ref{prop:U-decomp} and Remark \ref{rem:U-decomp}).

\item[$\varepsilon$ at $\alpha$] 
Recall that, by supposing that $\alpha$ belongs to $\Phi_{i+1}^{i}$ for $-1\leq i\leq d-1$, we put
\[
\epsilon_{\alpha}(\gamma)
:=
\begin{cases}
\Jac{\overline{\alpha(\gamma)}}{k_{F_{\alpha}}^{\times}} &\text{if $i\neq-1$ and $\frac{r_{i}}{2}\in\ord_{x}(\alpha)$,}\\
1&\text{otherwise},
\end{cases}
\]
for any $\gamma\in E^{\times}$.
\end{description}

\begin{proof}[Proof of Proposition \ref{prop:main}: asymmetric case]
If $\mfV_{[g]}$ is trivial, then $\chi_{\Tam,\alpha}\cdot \chi_{\Tam,\alpha'}|_{E^{\times}}$ is trivial.
On the other hand, by Proposition \ref{prop:Heisenberg}, $\epsilon_{\alpha}$ is also trivial.
Thus we are done in this case.

In the following, we assume that $\mfV_{[g]}\neq0$, or equivalently by Proposition \ref{prop:Heisenberg}, $i\neq-1$ and $\frac{r_{i}}{2}\in\ord_{x}(\alpha)$.
Since $\epsilon_{\alpha}$ and $\chi_{\Tam,\alpha}\cdot \chi_{\Tam,\alpha'}|_{E^{\times}}$ are trivial on $1+\mfp_{E}$, it is enough to check the equality 
\[
\epsilon_{\alpha}(\gamma)=(\chi_{\Tam,\alpha}\cdot\chi_{\Tam,\alpha'})(\gamma)
\]
for $\gamma=\varpi_{E}$ and each $\gamma\in\mu_{E}$.

First, as noted in \cite[Theorem 7.1 (i) (b)]{MR3509939}, we have
\[
t_{\varpi}(\boldsymbol{\mfV}_{[g]})(\varpi_{E})
=
\sgn_{\alpha(\varpi_{E})}(\mfV_{[g]})
\]
by \cite[Proposition 4.9 (ii) (a)]{MR3509939} (note that $\boldsymbol{\mfV}_{[g]}$ on the right-hand side of the last equality in \cite[Theorem 7.1 (i) (b)]{MR3509939} should be correctly $\mfV_{[g]}$).
Second, by \cite[1758 page, (7.2)]{MR3509939} and the equality preceding \cite[1758 page, (7.2)]{MR3509939}, we have
\[
(\chi_{\Tam,\alpha}\cdot\chi_{\Tam,\alpha'})(\gamma)
=
\sgn_{\alpha(\gamma)}(\mfV_{[g]})
\]
for any $\gamma\in\mu_{E}$.

Thus, in any case, it suffices to show the equality
\[
\sgn_{\zeta}(\mfV_{[g]})
=
\Jac{\zeta}{k_{F_{\alpha}}^{\times}}
\]
for any $\zeta\in k_{F_{\alpha}}^{\times}$.
Furthermore, by noting that $k_{F_{\alpha}}^{\times}$ is a cyclic group, it is enough to show the above equality only for a generator $\zeta$ of $k_{F_{\alpha}}^{\times}$.
Then the right-hand side of the above equality is given by $-1$.
On the other hand, since $k_{F_{\alpha}}^{\times}$ acts on $\mfV_{[g]}$ via the identification $\mfV_{[g]}\cong k_{F_{\alpha}}$, the action of $\zeta$ is given by a cyclic permutation of $(|k_{F_{\alpha}}|-1)$-letters (note that only $0\in k_{F_{\alpha}}$ is fixed).
Thus it can be written as a product of $(|k_{F_{\alpha}}|-2)$ transpositions.
As $|k_{F_{\alpha}}|$ is odd by the oddness assumption on $p$, we finally get
\[
\sgn_{\zeta}(\mfV_{[g]})
=
(-1)^{|k_{F_{\alpha}}|-2}
=-1.
\]
\end{proof}

\subsection{Symmetric unramified roots}\label{subsec:symmur}
Let $\alpha =\begin{bmatrix}1\\g\end{bmatrix}\in \Phi (\bfS,\G)_{\sym, \ur}$ be a symmetric unramified root with $g=\sigma ^{k}\phi^{i}$.
Note that, as explained in Section \ref{subsec:root} (Proposition \ref{prop:symmur}), $i$ equals either $0$ or $\frac{f}{2}$.
Moreover, when $i=0$, $k$ is not equal to $\frac{e}{2}$.
We recall the definitions of the terms on both sides of the equality in Proposition \ref{prop:main}.
\begin{description}
\item[Kaletha's $\chi$-data at $\alpha$] 
We take $\chi_{\Kal,\alpha}$ to be the unique nontrivial unramified quadratic character of $F_{\alpha}^{\times}$.
\item[Tam's $\chi$-data at $\alpha$]
As in the asymmetric case, we recall only the values of $\chi_{\Tam,\alpha}$ on $E^{\times}$.
By \cite[Theorem 7.1 (i)]{MR3509939}, $\chi_{\Tam,\alpha}$ is a character of $F_{\alpha}^{\times}$ satisfying the following conditions:
\begin{itemize}
\item
$\chi_{\Tam,\alpha}$ is trivial on $1+\mfp_{E}$,
\item
$\chi_{\Tam,\alpha}|_{\mu_{E}}=t^1_{\boldsymbol{\mu}}(\mfV_{[g]})$, and
\item
$\chi_{\Tam,\alpha}(\varpi_{E})=t^0_{\boldsymbol{\mu}}(\mfV_{[g]}^{\varpi})\cdot t_{\varpi}(\mfV_{[g]})\cdot t(\mfW_{[g]})$.
\end{itemize}
Here $\mfV_{[g]}^{\varpi}$ denotes the $\varpi$-fixed part of $\mfV_{[g]}$.

\item[$\epsilon$ at $\alpha$] 
Recall that, by supposing that $\alpha$ belongs to $\Phi_{i+1}^{i}$ for $-1\leq i\leq d-1$, we put
\[
\epsilon_{\alpha}(\gamma)
:=
\begin{cases}
\Jac{\overline{\alpha(\gamma)}}{k_{F_{\alpha}}^{1}}&\text{if $i\neq-1$ and $\frac{r_{i}}{2}\in\ord_{x}(\alpha)$,}\\
1&\text{otherwise}.
\end{cases}
\]
\end{description}

\begin{prop} \label{prop:symmunrammu}
We have the equality in Proposition \ref{prop:main}, which is
\[
\chi_{\Tam,\alpha}^{-1}(\zeta)=\epsilon_{\alpha}^{-1}(\zeta)\cdot\chi_{\Kal,\alpha}(\zeta),
\]
for every $\zeta \in \mu_{E}$.
\end{prop}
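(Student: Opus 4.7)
The proof will be a direct evaluation of the three $\{\pm 1\}$-valued characters at an arbitrary $\zeta\in\mu_E$. The first simplification is that $\chi_{\Kal,\alpha}$, being by definition the unique nontrivial \emph{unramified} quadratic character of $F_\alpha^\times$, is trivial on $\mathcal{O}_{F_\alpha}^\times$, and in particular on $\mu_E\subset\mathcal{O}_{F_\alpha}^\times$. On $\mu_E$ the desired identity therefore collapses to
\[
\chi_{\Tam,\alpha}(\zeta)=\epsilon_\alpha(\zeta).
\]

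By Proposition \ref{prop:symmur}, I split into two sub-cases. In the Tam-ramified sub-case $g=\sigma^k$ with $k\neq 0,e/2$, the element $g$ fixes $\mu_L$ (hence $\mu_E$) pointwise, so $\alpha(\zeta)=\zeta\cdot g(\zeta)^{-1}=1$ for every $\zeta\in\mu_E$. This forces $\epsilon_\alpha|_{\mu_E}\equiv 1$, and by Remark \ref{rem:U-decomp} the $\boldsymbol\mu$-action on $\mfV_{[g]}$ (multiplication by $\alpha(\zeta)=1$) is trivial, so $\chi_{\Tam,\alpha}|_{\mu_E}=t^1_{\boldsymbol\mu}(\mfV_{[g]})\equiv 1$ as well. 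Both sides equal $1$.

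In the Tam-unramified sub-case $g=\sigma^k\phi^{f/2}$, I compute $\alpha(\zeta)=\zeta^{1-q^{f/2}}\in k_{F_\alpha}^1$, which is a cyclic group of order $q^{f/2}+1$. If $\mfV_{[g]}=0$, both sides are trivially $1$ by Proposition \ref{prop:Heisenberg}. Otherwise the jump condition holds and a short computation gives
\[
\epsilon_\alpha(\zeta)=\alpha(\zeta)^{(q^{f/2}+1)/2}=\zeta^{(1-q^{f/2})(q^{f/2}+1)/2}=\zeta^{(1-q^f)/2},
\]
namely, the unique nontrivial quadratic character of $\mu_E\cong k_E^\times$. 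The remaining task is then to identify $t^1_{\boldsymbol\mu}(\mfV_{[g]})(\zeta)$ with this same quadratic character. I would do this by invoking Tam's description in \cite[Section 4.2]{MR3509939} of the $t^1$-factor of a symplectic $\F_p[\boldsymbol\mu]$-module as the sign of the $\boldsymbol\mu$-action on a Lagrangian, exhibiting such a Lagrangian $L\subset\mfV_{[g]}\cong k_{F_\alpha}$ from the Galois structure of the unramified quadratic extension $F_\alpha/F_{\pm\alpha}$, and then running the cyclic-permutation argument from Section \ref{subsec:asymm}: the multiplication-by-$\alpha(\zeta)$ action on $L$ decomposes into cycles whose signs recombine into the Jacobi symbol $\bigl(\tfrac{\overline{\alpha(\zeta)}}{k_{F_\alpha}^1}\bigr)$, which by the displayed computation equals $\zeta^{(1-q^f)/2}$.

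The principal obstacle is this last step: pinning down the Lagrangian and extracting the resulting sign in Tam's $t^1$-factor. Once this is set up via Tam's framework, everything else reduces to the arithmetic identity $q^f-1=(q^{f/2}-1)(q^{f/2}+1)$ and standard Jacobi-symbol manipulations, both of which are straightforward under the oddness assumption on $p$.
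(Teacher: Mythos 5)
Your opening simplification (trivializing $\chi_{\Kal,\alpha}|_{\mu_E}$) and your first sub-case $g=\sigma^{k}$, $k\neq 0,\tfrac{e}{2}$, match the paper's argument, which cites Tam's Proposition 4.9(i)(b) for $t^1_{\boldsymbol\mu}(\mfV_{[g]})\equiv 1$. The gap is in the second sub-case $g=\sigma^{k}\phi^{f/2}$. You assert that $k_{F_\alpha}^1$ is cyclic of order $q^{f/2}+1$ and accordingly take $\epsilon_\alpha(\zeta)=\alpha(\zeta)^{(q^{f/2}+1)/2}$. But $F_\alpha=E\cdot g(E)$ is in general strictly larger than $E$: if $r:=[F_\alpha:E]$, then $|k_{F_\alpha}|=q^{fr}$ and $|k_{F_\alpha}^1|=q^{fr/2}+1$, so the correct exponent is $(q^{fr/2}+1)/2$, not $(q^{f/2}+1)/2$. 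Your computation thus silently assumes $r=1$, which fails in general. (It is also why the paper's Jacobi-symbol identity is genuinely nontrivial rather than just $q^f-1=(q^{f/2}-1)(q^{f/2}+1)$.)

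Fixing this requires the additional arithmetic input that $r$ is odd, which the paper imports from Tam's Proposition 3.4 (using that $\alpha$ is symmetric unramified in Tam's sense precisely when $g=\sigma^{k}\phi^{f/2}$). With $r$ odd one factors $q^{fr/2}+1=(1+q^{f/2})(1-q^{f/2}+q^{f}-\cdots+q^{f(r-1)/2})$, notes the second factor plus $1$ is even, and concludes that $\tfrac{(1-q^{f/2})(q^{fr/2}+1)}{2}\equiv\tfrac{q^f-1}{2}\pmod{q^f-1}$. Without invoking the oddness of $r$ your argument would not close; and your proposed route through an explicit Lagrangian to evaluate $t^1_{\boldsymbol\mu}(\mfV_{[g]})$ is unnecessary overhead, since Tam's Proposition 4.9(i)(c) already supplies the value $\Jac{\zeta}{\mu_E}$ directly.
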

\begin{proof}
As $\chi_{\Kal,\alpha}|_{\mu_{E}}$ is trivial,
we are going to prove
\begin{equation} \label{eq:symmunrammu}
\epsilon_{\alpha} (\zeta)=\chi_{\Tam,\alpha}(\zeta)
\end{equation}
for $\zeta\in \mu_{E}$.

If $\mfV_{[g]}$ is trivial, then $\chi_{\Tam,\alpha}|_{\mu_{E}}$ is trivial.
On the other hand, by Proposition \ref{prop:Heisenberg}, $\epsilon_{\alpha}$ is also trivial.
Thus the equation \eqref{eq:symmunrammu} follows.

In the following, we assume that $\mfV_{[g]}\neq0$, or equivalently by Proposition \ref{prop:Heisenberg}, $i\neq-1$ and $\frac{r_{i}}{2}\in\ord_{x}(\alpha)$.
Then by \cite[Proposition 4.9 (i) (b) and (c)]{MR3509939} we have
\[
\chi_{\Tam,\alpha}(\zeta)=t^1_{\boldsymbol{\mu}}(\mfV_{[g]})(\zeta)=
\begin{cases}
1 &\text{if $g=\sigma^{k}$ ($k\neq \frac{e}{2}$),} \\
\Jac{\zeta}{\mu_{E}} &\text{if $g=\sigma^{k}\phi^{\frac{f}{2}}$ ($f$ is even)}.
\end{cases}
\]

If $g=\sigma^{k}$ with $k\neq \frac{e}{2}$, then we have $\alpha (\zeta)=\zeta \cdot \sigma^{k}(\zeta)^{-1}=1$ 
and the equality \eqref{eq:symmunrammu} holds.

If $g=\sigma^{k}\phi^{\frac{f}{2}}$, then we have $\alpha (\zeta)=\zeta \cdot \sigma^{k}\phi^{\frac{f}{2}}(\zeta)^{-1}=\zeta^{1-q^{\frac{f}{2}}}$. 
Thus we have
\begin{align*}
\epsilon_{\alpha} (\zeta)&=\Jac{\alpha(\zeta)}{k_{F_{\alpha}}^1} 
=\Jac{\zeta^{1-q^{\frac{f}{2}}}}{k_{F_{\alpha}}^1}=\zeta^{\frac{\bigl(1-q^{\frac{f}{2}}\bigr)\bigl(q^{\frac{fr}{2}}+1\bigr)}{2}}, \\
\chi_{\Tam,\alpha}(\zeta)&=\Jac{\zeta}{\mu_{E}}=\zeta^{\frac{q^f-1}{2}},
\end{align*}
where $r=[F_{\alpha}:E]$.
Hence, in order to check the equality (\ref{eq:symmunrammu}), it suffices to show that
\[
\frac{\bigl(1-q^{\frac{f}{2}}\bigr)\bigl(q^{\frac{fr}{2}}+1\bigr)}{2}
-
\frac{q^f-1}{2}
\equiv0
\mod{q^{f}-1}.
\]
Here we note that $r$ is odd by \cite[Proposition 3.4]{MR3509939} (as explained in  Section \ref{subsec:root}, our root $\alpha$ is symmetric unramified also in the sense of Tam).
Thus we get
\[
q^{\frac{fr}{2}}+1
=
\bigl(1-q^{\frac{f}{2}}+q^{f}-\cdots+q^{\frac{f(r-1)}{2}}\bigr)\bigl(1+q^{\frac{f}{2}}\bigr),
\]
hence
\[
\frac{\bigl(1-q^{\frac{f}{2}}\bigr)\bigl(q^{\frac{fr}{2}}+1\bigr)}{2}
-
\frac{q^f-1}{2}
=
\frac{1-q^{f}}{2}
\bigl(
1-q^{\frac{f}{2}}+q^{f}-\cdots+q^{\frac{f(r-1)}{2}}+1
\bigr).
\]
As $1-q^{\frac{f}{2}}+q^{f}-\cdots+q^{\frac{f(r-1)}{2}}+1$ is even, this is equal to $0$ modulo $q^{f}-1$.
\end{proof}

\begin{prop} \label{prop:symmunramvarpi}
We have the equality in Proposition \ref{prop:main} for $\varpi_{E}$, that is,
\[
\chi_{\Tam,\alpha}^{-1}(\varpi_{E})=\epsilon_{\alpha}^{-1}(\varpi_{E})\cdot\chi_{\Kal,\alpha}(\varpi_{E}).
\]
\end{prop}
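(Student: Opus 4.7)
The plan is to establish the equality by case-by-case computation according to Proposition \ref{prop:symmur}: the only possibilities for $g$ are $g = \sigma^{k}$ with $k \neq 0, e/2$ (Case A, Tam-symmetric ramified) and $g = \sigma^{k}\phi^{f/2}$ (Case B, Tam-symmetric unramified). Within each case we further distinguish by whether $\mfV_{[g]} = 0$ (equivalently, by Proposition \ref{prop:Heisenberg}, $i = -1$ or $t_{i}$ is odd) or $\mfV_{[g]} \neq 0$ (so $\mfW_{[g]} = 0$), giving four sub-cases to treat. In every sub-case we compute the three quantities $\chi_{\Tam,\alpha}(\varpi_{E})$, $\epsilon_{\alpha}(\varpi_{E})$, and $\chi_{\Kal,\alpha}(\varpi_{E})$ explicitly and verify the identity directly.

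The Kaletha side is immediate: using the explicit description $F_{\alpha} = E \cdot g(E)$ from Section \ref{subsec:root}, one sees in both Case A and Case B that $F_{\alpha}$ is obtained from $E$ by adjoining roots of unity of order prime to $p$, so $F_{\alpha}/E$ is unramified and $\chi_{\Kal,\alpha}(\varpi_{E}) = -1$. For $\epsilon_{\alpha}(\varpi_{E})$, Proposition \ref{prop:Heisenberg} gives the value $1$ whenever $\mfV_{[g]} = 0$; when $\mfV_{[g]} \neq 0$ we evaluate the Jacobi symbol $\bigl(\tfrac{\overline{\alpha(\varpi_{E})}}{k_{F_{\alpha}}^{1}}\bigr)$ via the formula $g(\varpi_{E}) = \zeta_{\phi}^{(q^{i}-1)/(q-1)} \zeta_{e}^{k}\varpi_{E}$, which follows from $\sigma(\varpi_{E}) = \zeta_{e}\varpi_{E}$ and $\phi(\varpi_{E}) = \zeta_{\phi}\varpi_{E}$ by iteration. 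The Tam side is then handled via the defining factorization $\chi_{\Tam,\alpha}(\varpi_{E}) = t^{0}_{\boldsymbol{\mu}}(\mfV_{[g]}^{\varpi}) \cdot t_{\varpi}(\mfV_{[g]}) \cdot t(\mfW_{[g]})$: we substitute Tam's explicit evaluations of the three t-factors from \cite[Proposition 4.9 and Section 4.3]{MR3509939}, which are expressed in terms of Jacobi symbols of the residue fields of $E$ and $F_{\alpha}$ and of normalized Gauss sums attached to $\psi_{F_{\alpha}}$ or $\psi_{F_{\pm\alpha}}$.

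The main obstacle lies in Case A with $\mfV_{[g]} = 0$. Here only $t(\mfW_{[g]})$ contributes to $\chi_{\Tam,\alpha}(\varpi_{E})$, and since $\alpha$ is symmetric ramified in Tam's sense the orthogonal form on $\mfW_{[g]}$ has to be chosen in the manner prescribed around \cite[Remark 7.7]{MR3509939}. After this choice, $t(\mfW_{[g]})$ becomes a sign expressed through a normalized Gauss sum of the residue field of $F_{\pm\alpha}$, and combined with the values $\chi_{\Kal,\alpha}(\varpi_{E}) = -1$ and $\epsilon_{\alpha}(\varpi_{E}) = 1$, the target identity reduces to the classical Gauss-sum relation $\mfn(\psi)^{2} = \bigl(\tfrac{-1}{k}\bigr)$ recorded in Section \ref{sec:notation}. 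In the remaining three sub-cases the same strategy reduces the equality to an elementary congruence modulo $q^{f}-1$ of the same flavor as the one closing the proof of Proposition \ref{prop:symmunrammu}.
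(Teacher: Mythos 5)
Your overall strategy (compute $\chi_{\Kal,\alpha}(\varpi_E)=-1$ from unramifiedness of $F_\alpha/E$, read off $\epsilon_\alpha(\varpi_E)$ via Proposition \ref{prop:Heisenberg}, and evaluate $\chi_{\Tam,\alpha}(\varpi_E)$ by substituting Tam's t-factor formulas) is in the right spirit, and the Kaletha side and the $\epsilon_\alpha$ side are handled correctly. However, there are two genuine problems with the rest.

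First, you misidentify where the difficulty lies. You single out the sub-case $g=\sigma^k$ ($k\neq \frac{e}{2}$) with $\mfV_{[g]}=0$ as the ``main obstacle'' and propose to compute $t(\mfW_{[g]})$ via the orthogonal form and Gauss sum machinery of \cite[Remark 7.7]{MR3509939}, reducing to $\mfn(\psi)^2=\Jac{-1}{k}$. The paper in fact treats the entire $\mfV_{[g]}=0$ case (for both shapes of $g$) in a single line: $t(\mfW_{[g]})=t(\mfU_{[g]})=-1$ by \cite[1728 page, (4.17)]{MR3509939}, and the identity follows immediately. Going instead through the Gauss sum formula for $t(\mfW_{[g]})$, as you suggest, would require controlling the $k_E$-dimension of $\mfW_{[g]}$ (which for $g=\sigma^k$ with general $k$ is $[k_{F_\alpha}:k_E]$ and need not be $1$ or $2$), the discriminant of the restricted quadratic form, and the auxiliary level data $\zeta^{(\bfi')}(\varpi_E)$; none of this is worked out, and it is not clear the computation collapses to the single relation you cite. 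Your route here is both unsubstantiated and much harder than necessary.

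Second, and more importantly, you omit the subdivision that does carry the real weight of the proof: when $\mfV_{[g]}\neq 0$, the paper introduces $\beta:=\alpha(\varpi_E)$ and splits into $\beta=1$, $\beta=-1$, and $\beta\neq\pm1$, because the values of $t^0_{\boldsymbol\mu}(\mfV_{[g]}^\varpi)$, $t^0_\varpi(\mfV_{[g]})$, and $t^1_\varpi(\mfV_{[g]})$ from \cite[Remark 7.3 and Proposition 4.9 (ii)]{MR3509939} depend on this. The resulting congruences are not ``of the same flavor'' as the one in Proposition \ref{prop:symmunrammu} modulo $q^f-1$: for $\beta=-1$ one needs a parity computation mod $4$, and for $\beta\neq\pm1$ the key identity is $\Jac{\beta}{k_{F_\alpha}^1}=\Jac{\beta}{\F_p[\beta]^1}$, proved by a congruence modulo $q_\beta+1$ where $q_\beta=|\F_p[\beta]'|$ and using that $t:=[k_{F_\alpha}:\F_p[\beta]]$ is odd. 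Without this $\beta$-subdivision and the associated arithmetic your proof cannot close.
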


\begin{proof}
Since the extension $F_{\alpha}/E$ is unramified, we have $\chi_{\Kal,\alpha}(\varpi_{E})=-1$.
Thus it is enough to show that
\[
(\epsilon_{\alpha} \cdot \chi_{\Tam,\alpha}^{-1})(\varpi_{E})=-1.
\]
Recall that, by Proposition \ref{prop:Heisenberg}, we have 
\[
\epsilon_{\alpha}(\varpi_{E})
=
\begin{cases}
1 &\text{if $\mfV_{[g]}$ is trivial}, \\
\Jac{\alpha(\varpi_{E})}{k_{F_{\alpha}}^{1}}  & \text{if $\mfV_{[g]}$ is nontrivial}.
\end{cases}
\]
On the other hand, by noting that $\mfW_{[g]}$ is trivial if and only if $\mfV_{[g]}$ is nontrivial and that we defined $t_{\varpi}(\mfV_{[g]})$ to be the product $t^{0}_{\varpi}(\mfV_{[g]})\cdot t^{1}_{\varpi}(\mfV_{[g]})$, we have
\[
\chi_{\Tam,\alpha}(\varpi_{E})
=\begin{cases}
t(\mfW_{[g]}) &\text{if $\mfV_{[g]}$ is trivial,} \\
t^0_{\boldsymbol{\mu}}(\mfV_{[g]}^{\varpi})\cdot t^{0}_{\varpi}(\mfV_{[g]})\cdot t^{1}_{\varpi}(\mfV_{[g]}) &\text{if $\mfV_{[g]}$ is nontrivial}.
\end{cases}
\]

If $\mfV_{[g]}$ is trivial, then we have $t(\mfW_{[g]})=t(\mfU_{[g]})=-1$ by \cite[1728 page, (4.17)]{MR3509939} and get the desired equality.

In the rest of the proof, we assume that $\mfV_{[g]}$ is nontrivial and put
\[
\beta :=\alpha (\varpi_{E})=\begin{cases}
\zeta_{e}^{k} &\text{if $g=\sigma^{k}$ ($k\neq\frac{e}{2}$),} \\
\zeta_{e}^{k}\zeta_{\phi}^{\frac{q^{\frac{f}{2}}-1}{q-1}} &\text{if $g=\sigma^{k}\phi^{\frac{f}{2}}$ ($f$ is even)}.
\end{cases}
\]
Note that $\beta \neq \pm 1$ in the first case.
As explained in \cite[Remark 7.3]{MR3509939}, we have
\[
t^0_{\boldsymbol{\mu}}(\mfV_{[g]}^{\varpi})
=
\begin{cases}
-1 &\text{if $g=\sigma^{k}\phi^{\frac{f}{2}}$, $\beta=1$},\\
1 &\text{otherwise}.
\end{cases}
\]
On the other hand, by \cite[Proposition 4.9 (ii) (b) and (c)]{MR3509939}, we have 
\begin{align*}
t^{0}_{\varpi}(\mfV_{[g]})&=
\begin{cases}
1 &\text{if $g=\sigma^{k}\phi^{\frac{f}{2}}$, $\beta=\pm 1$}, \\
-1 &\text{otherwise},
\end{cases} \\
t^{1}_{\varpi}(\mfV_{[g]})&=
\begin{cases}
1 &\text{if $g=\sigma^{k}\phi^{\frac{f}{2}}$, $\beta =1$}, \\
(-1)^{\frac{q^{\frac{f}{2}}-1}{2}} &\text{if $g=\sigma^{k}\phi^{\frac{f}{2}}$, $\beta =-1$}, \\
\Jac{\beta}{\F_{p}[\beta]^1} &\text{otherwise}.
\end{cases}
\end{align*}

First, if $g=\sigma ^k\phi ^{\frac{f}{2}}$ and $\beta=1$,
then we can easily check the desired equality.

Next suppose that $g=\sigma ^{k}\phi ^{\frac{f}{2}}$ and $\beta =-1$.
Then putting $r=[F_{\alpha}:E]$, we have
\begin{align*}
(\epsilon_{\alpha} \cdot \chi_{\Tam,\alpha}^{-1})(\varpi_{E})&=\Jac{-1}{k_{F_{\alpha}}^{1}}\cdot (-1)^{\frac{1-q^{\frac{f}{2}}}{2}} \\
&=(-1)^{\frac{q^{\frac{fr}{2}}+1}{2}}\cdot (-1)^{\frac{1-q^{\frac{f}{2}}}{2}}\\
&=(-1)^{\frac{q^{\frac{fr}{2}}-q^{\frac{f}{2}}}{2}+1}.
\end{align*}
Here, again by noting that $r$ is odd as explained in the proof of Proposition \ref{prop:symmunrammu}, we have
\[
q^{\frac{fr}{2}}-q^{\frac{f}{2}}
=q^{\frac{f}{2}}(q^{\frac{f}{2}}-1)\bigl(q^{\frac{f(r-2)}{2}}+\cdots+q^{\frac{f}{2}}+1\bigr)
\equiv0
\mod{4}.
\]
This implies that $(\epsilon_{\alpha} \cdot \chi_{\Tam,\alpha}^{-1})(\varpi_{E})=-1$, as desired.

Finally suppose that $\beta \neq \pm 1$.
Then we have $t^0_{\boldsymbol{\mu}}(\mfV_{[g]}^{\varpi})\cdot t^{0}_{\varpi}(\mfV_{[g]})=-1$ and thus we are going to show that
\begin{equation} \label{eq:symurpi}
\Jac{\beta}{k_{F_{\alpha}}^{1}}=\Jac{\beta}{\F_{p}[\beta]^1}.
\end{equation}
Before we start our computation, we remark that the fact $\beta$ belongs to $\F_{p}[\beta]^{1}$ can be checked as follows.
First, as already explained in Section \ref{subsec:epsilon}, $\beta=\alpha(\varpi_{E})$ belongs to $k_{F_{\alpha}}^{1}$.
By combining this fact with our assumption that $\beta$ is not equal to $\pm1$, we know that it does not lie in $k_{F_{\pm\alpha}}$.
Thus $\F_{p}[\beta]':=\F_{p}[\beta]\cap k_{F_{\pm\alpha}}$ gives the unique subfield of $\F_{p}[\beta]$ such that $[\F_{p}[\beta]:\F_{p}[\beta]']=2$.
Moreover, as the Galois group for $k_{F_{\alpha}}/k_{F_{\pm\alpha}}$ can be naturally identified with that for $\F_{p}[\beta]/\F_{p}[\beta]'$ by restriction, we have $k_{F_{\alpha}}^{1}\cap\F_{p}[\beta]=\F_{p}[\beta]^{1}$.
Hence $\beta$ belongs to $\F_{p}[\beta]^{1}$.

Now let us show the above equality (\ref{eq:symurpi}).
If we put $t:=[k_{F_{\alpha}}:\F_{p}[\beta]]$ and $q_{\beta}:=|\F_p[\beta]'|$, then we have $|k_{F_{\alpha}}|=q_{\beta}^{2t}$.
\[
\xymatrix{
&k_{F_{\alpha}}&\\
k_{F_{\pm\alpha}}\ar@{-}[ur]^-{\text{quadratic}\quad}&&\F_{p}[\beta]\ar@{-}[ul]_-{\quad\text{degree $t$}}\\
&\F_{p}[\beta]' (=\F_{q_{\beta}})\ar@{-}[ul]^-{\text{degree $t$}\quad}\ar@{-}[ur]_-{\quad\text{quadratic}}&
}
\]
Thus we get
\[
\Jac{\beta}{k_{F_{\alpha}}^{1}}=\beta^{\frac{q_{\beta}^{t}+1}{2}}
\quad\text{and}\quad
\Jac{\beta}{\F_{p}[\beta]^1}=\beta^{\frac{q_{\beta}+1}{2}}.
\]
Since $\beta \in \F_{p}[\beta]^1$ satisfies $\beta^{q_{\beta}+1}=1$, it suffices to prove 
\[
\frac{q_{\beta}^{t}+1}{2}
-
\frac{q_{\beta}+1}{2}
\equiv0
\mod{q_{\beta}+1}.
\]
We have
\[
\frac{q_{\beta}^{t}+1}{2}
-
\frac{q_{\beta}+1}{2}
=\frac{q_{\beta}(q_{\beta}-1)}{2}\cdot(q_{\beta}^{t-2}+\cdots+q_{\beta}+1).	
\]
Here we note that $t$ is an odd integer.
Indeed, 
if we suppose that $t=2s$ were even, then we would have
\[
\beta^{q_{\beta}^{t}+1}=\beta^{q_{\beta}^{2s}+1}=\beta^{(q_{\beta}^{2})^{s}}\cdot \beta=\beta^2.
\]
However, as $\beta$ belongs to $k_{F_{\alpha}}^{1}$, we would have $\beta^{q_{\beta}^{t}+1}=1$ and this contradicts the assumption that $\beta \neq \pm 1$.
Therefore, by the oddness of $t$, we can conclude that $q_{\beta}+1$ divides $q_{\beta}^{t-2}+\cdots+q_{\beta}+1$ and this completes the proof.
\end{proof}

\begin{proof}[Proof of Proposition \ref{prop:main}: symmetric unramified case]
Since the characters on both sides of the equality in Proposition \ref{prop:main} are tamely ramified, they are equal by Propositions \ref{prop:symmunrammu} and \ref{prop:symmunramvarpi}.
\end{proof}

\subsection{Symmetric ramified roots}\label{subsec:symmram}
In this section, we consider the case of symmetric ramified roots.
First we recall that the elliptic torus $\bfS$ in $\G$ has a symmetric ramified root only when the ramification index $e$ of the extension $E/F$ is even (see Proposition \ref{prop:symmram}).
Thus, in this section, we assume that the ramification index $e$ is even.
Under this assumption, there exists a unique $\Gamma_{F}$-orbit of symmetric ramified roots and it is represented by $\alpha:=\begin{bmatrix}1\\\sigma^{\frac{e}{2}}\end{bmatrix}$.
As explained in the proof of Proposition \ref{prop:symmram}, the element $g=\sigma^{\frac{e}{2}}$ of $\Gamma_{F}$ preserves $E$.
Note that thus the field $F_{\alpha}$ is nothing but $E$.

We first take a Howe factorization $(\phi_{-1},\ldots,\phi_{d})$ of $(\bfS,\xi)$ (see Section \ref{subsec:Howe}).
Let $\bfi$ be the unique index such that $-1\leq \bfi \leq d-1$ and the symmetric ramified root $\alpha$ lies in $\Phi_{\bfi+1}^{\bfi}=\Phi(\bfS, \G^{\bfi+1})-\Phi(\bfS, \G^{\bfi})$.
Note that $\bfi\neq-1$ since every symmetric root belonging to $\Phi(\bfS,\G^{0})$ is unramified (see \cite[the paragraph before Corollary 4.10.1]{MR4013740}).
We simply write $r$ for the depth $r_{\bfi}$ of the character $\phi_{\bfi}$.

Now let us recall the definitions of $\chi_{\Kal,\alpha}$ and $\chi_{\Tam,\alpha}$ for our symmetric ramified root $\alpha$:

\begin{description}
\item[Kaletha's $\chi$-data at $\alpha$]
We take $\chi_{\Kal,\alpha}$ to be a character of $E^{\times}$ satisfying the following conditions:
\begin{itemize}
\item
$\chi_{\Kal,\alpha}$ is trivial on $1+\mfp_{E}$,
\item
$\chi_{\Kal,\alpha}|_{\mu_{E}}$ is the unique nontrivial quadratic character of $\mu_{E}$, and
\item
$\chi_{\Kal,\alpha}(2a_{\alpha})=\lambda_{E/F_{\pm\alpha}}$,
\end{itemize}
where $a_{\alpha}$ is an (any) element of $E_{-r}$ which is a lift of the unique element $\bar{a}_{\alpha}$ of $E_{-r}/E_{-r+}$ satisfying the equality
\[
\phi_{\bfi}\bigl(\Nr_{\bfS(E)/\bfS(F)}(\alpha^{\vee}(X+1))\bigr)
=
\psi_{E}(\bar{a}_{\alpha}\cdot X)
\]
for every $X\in E_{r}/E_{r+}$ (see \cite[1133 page, (4.7.3)]{MR4013740}).
Here $\psi_{E}$ is an additive character of $E$ taken as in Section \ref{sec:notation}, $\lambda_{E/F_{_{\pm\alpha}}}$ is the Langlands constant (see Section \ref{sec:notation}), and $\Nr_{\bfS(E)/\bfS(F)}$ is the norm map from $\bfS(E)$ to $\bfS(F)$.
Note that the third condition on $\chi_{\Kal,\alpha}$, when combined with the first two, is enough to characterize $\chi_{\Kal,\alpha}$ since $r$ is given by $\frac{2s+1}{e}$ with some odd integer $2s+1$ (see \cite[Section 4.7]{MR4013740}) and $\chi_{\Kal,\alpha}|_{F_{\pm\alpha}^{\times}}$ should be the quadratic character corresponding to the extension $E/F_{\pm\alpha}$ by the definition of $\chi$-data.

\item[Tam's $\chi$-data at $\alpha$]
We take $\chi_{\Tam,\alpha}$ to be the unique character of $E^{\times}$ satisfying the following conditions:
\begin{itemize}
\item
$\chi_{\Tam,\alpha}$ is trivial on $1+\mfp_{E}$,
\item
$\chi_{\Tam,\alpha}|_{\mu_{E}}$ is the unique nontrivial quadratic character of $\mu_{E}$, and
\item
$\chi_{\Tam,\alpha}(\varpi_{E})=t_{\boldsymbol{\mu}}^{0}(\mfV_{[g]}^{\varpi})\cdot t_{\varpi}(\mfV_{[g]})\cdot t(\mfW_{[g]})$.
\end{itemize}
Note that, by Proposition \ref{prop:Heisenberg} and the oddness of the numerator of $r$ explained above, the module $\mfV_{[g]}$ is zero.
In particular, two t-factors $t_{\boldsymbol{\mu}}^{0}(\mfV_{[g]}^{\varpi})$ and $t_{\varpi}(\mfV_{[g]})$ appearing in the definition of $\chi_{\Tam,\alpha}(\varpi_{E})$ are in fact trivial.
Thus the key in our calculation of Tam's $\chi$-data is to determine the factor $t(\mfW_{[g]})$ explicitly.
\end{description}

\begin{rem}\label{rem:Howe}
A priori the definition of $\chi_{\Kal,\alpha}$ may depend on the choice of a Howe factorization of $\xi$.
In fact, also for $\chi_{\Tam,\alpha}$, a priori there may be such a dependence although we cannot see it at this point since we have not recalled the precise definition of $\chi_{\Tam,\alpha}$ (i.e., the definition of $t(\mfW_{[g]})$) up to now.
However, in fact, they are independent of the choice of a Howe factorization.
We can easily check this by noting that any two different Howe factorizations are ``refactorizations'' of each other (see \cite[Lemma 3.6.6]{MR4013740}, or also \cite[Proposition 1.1 (2)]{MR2148193}).
\end{rem}

In order to compare $\chi_{\Kal,\alpha}$ with $\chi_{\Tam,\alpha}$, we next recall the following factorization of $E/F$, which is used in Bushnell--Henniart's computation of the rectifier:
\[
F=K_{-1}
\subsetneq
K_{0}
\subsetneq
K_{1}
\subsetneq
\cdots
\subsetneq
K_{l-1}
\subsetneq
K_{l}
\subsetneq
K_{l+1}=E,
\]
where
\begin{itemize}
\item
$K_{0}/K_{-1}$ is an unramified extension,
\item
$K_{i+1}/K_{i}$ is a quadratic ramified extension for each $0\leq i\leq l-1$, and
\item
$K_{l+1}/K_{l}$ is a totally ramified extension of odd degree (we put $m$ to be this degree).
\end{itemize}
Note that such a factorization is unique and given explicitly by
\[
K_{0}=F[\mu_{E}],\,\,\ldots,\,\,
K_{l-1}=F[\mu_{E},\varpi_{E}^{2m}],\,\,
K_{l}=F[\mu_{E},\varpi_{E}^{m}],\,\,
K_{l+1}=F[\mu_{E},\varpi_{E}].
\]
Here, by our assumption on the ramification index, we have $l\geq1$, i.e., we have at least one quadratic ramified extension in the above sequence.
We also note that, since the action of $\sigma^{\frac{e}{2}}$ on $E$ is given by 
\[
\zeta \mapsto \zeta \quad (\zeta \in \mu_{E})
\quad\text{and}\quad 
\varpi_{E} \mapsto -\varpi_{E},
\]
$\sigma^{\frac{e}{2}}$ belongs to $\Gamma_{K_{l-1}}-\Gamma_{K_{l}}$.

Now we consider the tame twisted Levi subgroup $\bfH$ of $\G$ defined to be the centralizer of $K_{l-1}^{\times}\subset E^{\times}\cong\bfS(F)$ (note that then $\bfH$ is isomorphic to $\Res_{K_{l-1}/F}\GL_{2m}$) and a Howe factorization of $\xi$ with respect to $\bfS\subset\bfH$.
Then, by Proposition \ref{prop:Howe-rest}, we can construct a Howe factorization $(\phi'_{-1},\ldots,\phi'_{d'})$ of $\xi$ with respect to $\bfS\subset\bfH$ by using the Howe factorization $(\phi_{-1},\ldots,\phi_{d})$ of $\xi$ with respect to $\bfS\subset\G$ satisfying the following equality for each $0\leq i \leq d'$:
\begin{equation}\label{eq:i=i'}
\phi_{[i]}|_{\bfS(F)_{r_{[i]}}}
=
\phi'_{i}|_{\bfS(F)_{r'_{i}}}.
\end{equation}
Here recall that $r'_{0}<\cdots<r'_{d'}$ (resp.\ $r_{0}<\cdots<r_{d}$) is a sequence of jumps of $\xi$ with respect to $\bfS\subset\bfH$ (resp.\ $\bfS\subset\G$) and $[i]$ denotes the unique index satisfying $r_{[i]}=r'_{i}$.
If we put $-1\leq\bfi'\leq d'-1$ to be the unique index satisfying $\alpha\in\Phi(\bfS,\bfH^{\bfi'+1})-\Phi(\bfS,\bfH^{\bfi'})$, then we have $[\bfi']=\bfi$.
Note that the existence of such an index (or, in other words, the fact that $\alpha$ lies in $\Phi(\bfS,\bfH)$) is guaranteed by that $\sigma^{\frac{e}{2}}$ belongs to $\Gamma_{K_{l-1}}$.

Here we remark that, in general, a Howe factorization of $\xi$ with respect to $\bfS\cong\Res_{E/F}\Gm\subset\bfH\cong\Res_{K_{l-1}/F}\GL_{2m,K_{l-1}}$ can be regarded as that with respect to $\Res_{E/K_{l-1}}\Gm\subset\GL_{2m,K_{l-1}}$ (and vice versa).
This can be easily checked by noting that the root system $\Phi(\bfS,\bfH)$ for $\bfS\subset\bfH$ is given by $\Phi(\Res_{E/K_{l-1}}\Gm,\GL_{2m,K_{l-1}})\rtimes_{\Gamma_{K_{l-1}}}\Gamma_{F}$.
Then, by seeing our descended Howe factorization $(\phi'_{-1},\ldots,\phi'_{d'})$ as the latter one, it can be also understood as a classical Howe factorization in the sense of Howe and Moy (see Section \ref{subsec:Howe}).
In the language of Howe and Moy, they are described as follows.
We have a sequence of subfields
\[
K_{l-1}=E'_{d'}
\subsetneq
\cdots
\subsetneq
E'_{0}
\subset
E'_{-1}=E,
\]
and characters $\xi'_{i}$ of ${E'_{i}}^{\times}$ satisfying
\[
\xi
=
\xi'_{-1}
\cdot
(\xi'_{0}\circ\Nr_{E/E'_{0}})
\cdots
(\xi'_{d'}\circ\Nr_{E/E'_{d'}}).
\]
Then the index $\bfi'$ is characterized as the unique index satisfying $\sigma^{\frac{e}{2}}\in \Gamma_{E'_{\bfi'+1}}- \Gamma_{E'_{\bfi'}}$.
The depth of the character $\phi'_{\bfi'}=\xi'_{\bfi'}\circ\Nr_{E/E'_{\bfi'}}$ is given by $r'_{\bfi'}=r_{\bfi} (=:r=\frac{2s+1}{e})$.

\begin{proof}[Proof of Proposition \ref{prop:main}: symmetric ramified case]
Our task is to show that 
\[
\chi_{\Tam,\alpha}^{-1}=\chi_{\Kal,\alpha}.
\]
For this, by the definitions of $\chi_{\Kal,\alpha}$ and $\chi_{\Tam,\alpha}$, it suffices to check that 
\[
\chi_{\Tam,\alpha}^{-1}(2a_{\alpha})=\chi_{\Kal,\alpha}(2a_{\alpha}).
\]

We start from rewriting $\chi_{\Kal,\alpha}(2a_{\alpha})$.
By definition, $\chi_{\Kal,\alpha}(2a_{\alpha})$ is equal to the Langlands constant $\lambda_{E/F_{\pm\alpha}}$.
As $E/F_{\pm\alpha}$ is quadratic ramified, we have
\[
\lambda_{E/F_{\pm\alpha}}=\mfn(\psi_{F_{\pm\alpha}})
\]
(see, for example, \cite[Lemma 1.5 (3)]{MR2148193}).
By noting that $K_{l-1}$ is contained in $F_{\pm \alpha}=F[\mu_{E}, \varpi_{E}^{2}]$ and that the extension $F_{\pm\alpha}/K_{l-1}$ is totally ramified of degree $m$, the additive character of $k_{F_{\pm\alpha}}=k_{E}$ induced by $\psi_{F_{\pm\alpha}}$ is equal to the composition of the additive character induced by $\psi_{K_{l-1}}$ with the multiplication map via $m$.
Thus we get
\begin{align*}
\mfn(\psi_{F_{\pm\alpha}})
&=q^{-\frac{f}{2}}\sum_{x\in k_{E}^{\times}}\begin{pmatrix}\frac{x}{k_{E}^{\times}}\end{pmatrix}\psi_{F_{\pm\alpha}}(x)\\
&=q^{-\frac{f}{2}}\sum_{x\in k_{E}^{\times}}\begin{pmatrix}\frac{x}{k_{E}^{\times}}\end{pmatrix}\psi_{K_{l-1}}(mx)\\
&=q^{-\frac{f}{2}}\begin{pmatrix}\frac{m^{-1}}{k_{E}^{\times}}\end{pmatrix}\sum_{x\in k_{E}^{\times}}\begin{pmatrix}\frac{x}{k_{E}^{\times}}\end{pmatrix}\psi_{K_{l-1}}(x)
=\begin{pmatrix}\frac{m}{q^{f}}\end{pmatrix}^{-1}\cdot\mfn(\psi_{K_{l-1}}).
\end{align*}
In summary, we get
\begin{equation}\label{eq:chi-Kal}
\chi_{\Kal,\alpha}(2a_{\alpha})
=\mfn(\psi_{K_{l-1}})\begin{pmatrix}\frac{m}{q^{f}}\end{pmatrix}.
\end{equation}

We next rewrite $\chi_{\Tam,\alpha}^{-1}(\varpi_{E})$.
First, as we already explained in the definition of Tam's $\chi$-data, we have $\chi_{\Tam,\alpha}(\varpi_{E})=t(\mfW_{[g]})$ by the oddness of the numerator of $r$.
Next, we recall from \cite[Section 5.3.4 and Remark 7.7]{MR3509939} the expression of the t-factor $t(\mfW_{[g]})$ in terms of a quadratic form and the Gauss sum.
For this, observe that for any subextension $L_1/L_2$ of $E/K_{l-1}$ the modules defined by
\[
\mfW_{L_1/L_2}:=\bigoplus_{[h]\in \Gamma_{E}\backslash (\Gamma_{L_2}- \Gamma_{L_1})/\Gamma_{E}}\mfW_{[h]} \\
\subset \mfW_{E/K_{l-1}}:=\bigoplus_{[h]\in(\Gamma_{E}\backslash\Gamma_{K_{l-1}}/\Gamma_{E})'}\mfW_{[h]},
\]
as well as $\mfW_{[g]} \subset \mfW_{E/K_{l-1}}$, are naturally vector spaces over $k_{K_{l-1}}=k_{E}$.
Recall that these spaces can be regarded as subspaces of $\mfU=\mfA/\mfP$.
Let $\mathbf{q}_{K_{l-1}}^{(\bfi')}$ be the quadratic form defined by
\begin{align*}
\mathbf{q}_{K_{l-1}}^{(\bfi')} \colon \mfW_{E'_{\bfi'}/E'_{\bfi'+1}} &\to k_{E}; \\
x&\mapsto \overline{\tr_{A_{K_{l-1}}/K_{l-1}}\bigl((x-\varpi_{E}^{-1}x\varpi_{E})\cdot \varpi_{E}^{s}x\varpi_{E}^{-s}\bigr)}.
\end{align*}
where we write $A_{K_{l-1}}:=\End_{K_{l-1}}(E)$ and $\tr_{A_{K_{l-1}}/K_{l-1}}$ denotes the reduced trace of $A_{K_{l-1}}/K_{l-1}$.
Here note that this definition makes sense since the space $\mfW_{E/K_{l-1}}$ can be identified with $(A_{K_{l-1}}\cap\mfA)/(A_{K_{l-1}}\cap\mfP)\subset \mfA/\mfP$.
We also note that we have $\End_{K_{l-1}}(E)\cong \mathrm{Mat}_{2m}(K_{l-1})$ and that $\tr_{A_{K_{l-1}}/K_{l-1}}$ is nothing but the trace as a matrix in $\mathrm{Mat}_{2m}(K_{l-1})$.
This is the quadratic form $\mathbf{q}_{\mathbf{F}}^{(j)}$ in \cite[1739 page, (5.19)]{MR3509939} for $\mathbf{F}=K_{l-1}$ and $j=\bfi'$ 
(as remarked before we start this proof, the depth of $\phi'_{\bfi'}=\xi'_{\bfi'} \circ \Nr_{E/E'_{\bfi'}}$ is equal to $r=\frac{2s+1}{e}$ and thus $\mathbf{h}_{\bfi'}$ in \cite[1738 page]{MR3509939} is equal to $s+1$).
Now as explained in \cite[Remark 7.7]{MR3509939}, the t-factor $t(\mfW_{[g]})$ is given by
\begin{equation} \label{eq:Tam-Rem7.7}
\Bigl(\begin{pmatrix}\frac{\zeta^{(\bfi')}(\varpi_{E})}{k_{E}^{\times}}\end{pmatrix}\mfn(\psi_{K_{l-1}})\Bigr)^{\dim_{k_E}\mfW_{[g]}}
\begin{pmatrix}\frac{\det\bigl(\mathbf{q}_{K_{l}/K_{l-1}}^{(\bfi')}|_{\mfW_{[g]}}\bigr)}{k_{E}^{\times}}\end{pmatrix}.
\end{equation}
Here the meanings of the symbols $\zeta^{(\bfi')}(\varpi_{E})$ and $\det(\mathbf{q}_{K_{l}/K_{l-1}}^{(\bfi')}|_{\mfW_{[g]}})$ are as follows:
\begin{itemize}
\item
We consider an element $\alpha^{(\bfi')}(\varpi_{E})\in {E'_{\bfi'}}^{\times}$ satisfying 
\[
\xi'_{\bfi'}(1+X)=\psi_{E'_{\bfi'}}\bigl(\alpha^{(\bfi')}(\varpi_{E})\cdot X\bigr)
\]
for any $X\in E'_{\bfi',r}$.
Since such an element is unique up to $U_{E'_{\bfi'}}^{1}$, we can find a unique root of unity $\zeta^{(\bfi')}(\varpi_{E})\in\mu_{E}$ satisfying
\[
\alpha^{(\bfi')}(\varpi_{E})
\equiv
\varpi_{E}^{-(2s+1)}\zeta^{(\bfi')}(\varpi_{E})
\mod U_{E}^{1}.
\]
\item
We put $\mfW^{(\bfi')}=\mfW_{K_{l}/K_{l-1}}\cap \mfW_{E'_{\bfi'}/E'_{\bfi'+1}}$.
Then we define a quadratic form $\mathbf{q}_{K_{l}/K_{l-1}}^{(\bfi')}$ to be the restriction $\mathbf{q}_{K_{l-1}}^{(\bfi')}|_{\mfW^{(\bfi')}}$ of $\mathbf{q}_{K_{l-1}}^{(\bfi')}$ to $\mfW^{(\bfi')}$.
The symbol $\det(\mathbf{q}_{K_{l}/K_{l-1}}^{(\bfi')}|_{\mfW_{[g]}})$ denotes the discriminant of the restriction of $\mathbf{q}_{K_{l}/K_{l-1}}^{(\bfi')}$ to $\mfW_{[g]}$.
\end{itemize}
We remark that, strictly speaking, the quadratic form $\mathbf{q}_{K_{l}/K_{l-1}}^{(\bfi')}$ in the last quadratic character in \cite[Remark 7.7]{MR3509939} should be written as $\mathbf{q}_{K_{l}/K_{l-1}}^{(\bfi')}|_{\mfW_{[g]}}$.
Here note that, by definition, $\mathbf{q}_{K_{l}/K_{l-1}}^{(\bfi')}|_{\mfW_{[g]}}$ is nothing but $\mathbf{q}_{K_{l-1}}^{(\bfi')}|_{\mfW_{[g]}}$.
Now let us compute the discriminant of $\mathbf{q}_{K_{l-1}}^{(\bfi')}|_{\mfW_{[g]}}$.
Since we have $\dim_{k_E}\mfW_{[g]}=1$ (recall Proposition \ref{prop:U-decomp} and the definition of $\mfW_{[g]}$), this is easily done by an explicit computation as follows (cf.\ the proof of \cite[Proposition 8.3]{MR2148193}).
We start by extending $\mathbf{q}_{K_{l-1}}^{(\bfi')}$ from $\mfW_{E'_{\bfi'}/E'_{\bfi'+1}}$ to $\mfW_{E/K_{l-1}}$ by the same formula:
\begin{align*}
\mfW_{E/K_{l-1}} &\to k_{E}; \\
x&\mapsto \overline{\tr_{A_{K_{l-1}}/K_{l-1}}\bigl((x-\varpi_{E}^{-1}x\varpi_{E}) \cdot \varpi_{E}^{s}x\varpi_{E}^{-s}\bigr)},
\end{align*}
for which we still write $\mathbf{q}_{K_{l-1}}^{(\bfi')}$.
If we take a basis $\{\varpi_{E}^{2m-1}, \varpi_{E}^{2m-2}, \dots, \varpi_{E}, 1\}$ of $E$ over $K_{l-1}=F[\mu_{E}, \varpi_{E}^{2m}]$
to identify $A_{K_{l-1}}=\End_{K_{l-1}}(E)$ with $\mathrm{Mat}_{2m}(K_{l-1})$,
then $\mfA_{K_{l-1}}:=\mfA \cap A_{K_{l-1}}$ (resp.\ $\mfP_{K_{l-1}}:=\mfP \cap A_{K_{l-1}}$) is identified with the subset of matrices with entries in $\mcO_{K_{l-1}}$ that are upper triangular (resp.\ strictly upper triangular) when reduced modulo $\mfp_{K_{l-1}}$.
We have an isomorphism of $k_{E}$-algebras:
\begin{align*}
&\mfW_{E/K_{l-1}}\cong\mfA_{K_{l-1}}/\mfP_{K_{l-1}} \cong k_{E}^{2m}; \\
&(x_{ij})_{1\leq i, j \leq 2m}\mapsto (\overline{x_{11}}, \overline{x_{22}}, \dots, \overline{x_{2m, 2m}}).
\end{align*}
The action of $\varpi_{E}\in \varpi$ on the right-hand side induced by this isomorphism is given by the translation 
\[
(a_1, a_2, \dots, a_{2m}) \mapsto (a_2, \dots, a_{2m}, a_1).
\]
Thus the submodule $\mfW_{[g]}\subset \mfW_{E/K_{l-1}}$ is mapped to
\[
(k_{E}^{2m})_{[g]}:=\{ v(a):=(a, -a, \dots, a, -a)\in k_{E}^{2m} \mid a\in k_{E}\}
\]
as it can be characterized as the subspace where $\varpi_{E}$ acts as multiplication via $-1$.
Moreover, the trace map $\tr_{A_{K_{l-1}}/K_{l-1}}$ on $A_{K_{l-1}}$ induces the summation map
\[
\nu\colon k_{E}^{2m}\to k_{E}; \quad (a_1, a_2, \dots, a_{2m})\mapsto \sum_{i=1}^{2m}a_i.
\]
Therefore the quadratic form $\mathbf{q}_{K_{l-1}}^{(\bfi')}$ on $\mfW_{[g]}$ induces the following quadratic form on $(k_{E}^{2m})_{[g]}$:
\[
(k_{E}^{2m})_{[g]}\to k_{E}; \quad v(a)\mapsto \nu \bigl(2v(a)\cdot (-1)^sv(a)\bigr)
=(-1)^s4ma^2,
\]
whose discriminant is clearly $(-1)^s4m\equiv (-1)^sm \mod{(k_{E}^{\times})^2}$.
Now, by substituting $\dim_{k_E} \mfW_{[g]}=1$ and the discriminant just computed into \eqref{eq:Tam-Rem7.7},
we get
\begin{equation}\label{eq:Tam-varpi}
\chi_{\Tam,\alpha}(\varpi_{E})=
\begin{pmatrix}\frac{\zeta^{(\bfi')}(\varpi_{E})}{k_{E}^{\times}}\end{pmatrix}\mfn(\psi_{K_{l-1}})
\begin{pmatrix}\frac{(-1)^{s}m}{q^{f}}\end{pmatrix}.
\end{equation}

Before we start our comparison of $\chi_{\Kal,\alpha}$ and $\chi_{\Tam,\alpha}$, we investigate the relation between $a_{\alpha}$ and $\zeta^{(\bfi')}(\varpi_{E})$.
Recall that $a_{\alpha}\in E_{-r}$ is a lift of the unique element $\bar{a}_{\alpha}\in E_{-r}/E_{-r+}$ satisfying the equality
\[
\phi_{\bfi}\bigl(\Nr_{\bfS(E)/\bfS(F)}\circ\alpha^{\vee}(X+1)\bigr)
=
\psi_{E}(\bar{a}_{\alpha}\cdot X)
\]
for any $X\in E_{r}/E_{r+}$.
Since we have $\phi_{\bfi}|_{\bfS(F)_{r}}=\phi'_{\bfi'}|_{\bfS(F)_{r}}$ as explained in (\ref{eq:i=i'}) and $\phi'_{\bfi'}|_{\bfS(F)}=\xi'_{\bfi'}\circ\Nr_{E/E'_{\bfi'}}$, this is equivalent to
\[
\xi'_{\bfi'}\circ\Nr_{E/E'_{\bfi'}}\bigl(\Nr_{\bfS(E)/\bfS(F)}\circ\alpha^{\vee}(X+1)\bigr)
=
\psi_{E}(\bar{a}_{\alpha}\cdot X).
\]
Here we note that the image of $X+1$ under the map $\Nr_{\bfS(E)/\bfS(F)}\circ\alpha^{\vee}$ from $E^{\times}$ to $\bfS(F)\cong E^{\times}$ is given by $(X+1)(\sigma^{\frac{e}{2}}(X)+1)^{-1}$.
Indeed, if we consider an isomorphism from $\bfS(\overline{F})$ to $\prod_{i=1}^{n}\overline{F}^{\times}$ as in Section \ref{subsec:Tam}, the coroot $\alpha^{\vee}$ is described as
\[
E^{\times}\rightarrow \bfS(E);\quad x\mapsto (x,1,\ldots,1,x^{-1},1,\ldots,1).
\]
Here the component having $x^{-1}$ corresponds to $\sigma^{\frac{e}{2}}\in\{\Gamma_{F}/\Gamma_{E}\}$.
Note that $\alpha^{\vee}$ maps $E^{\times}$ into $\bfS(E)$ as $E$ is the splitting field $F_{\alpha}$ of $\alpha$.
On the other hand, for each $z\in\bfS(E)$, its norm $\Nr_{\bfS(E)/\bfS(F)}(z)$ is defined as the product of $g(z)$ over $g\in\Gamma_{F}/\Gamma_{E}$.
Thus, by considering a description of the Galois action on $\prod_{i=1}^{n}\overline{F}^{\times}$, we can easily check that $\Nr_{\bfS(E)/\bfS(F)}\circ\alpha^{\vee}(x)$ is given by $x\cdot\sigma^{\frac{e}{2}}(x)^{-1}$ for each $x\in E^{\times}$.

As we have
\[
(X+1)\bigl(\sigma^{\frac{e}{2}}(X)+1\bigr)^{-1}
\equiv
X-\sigma^{\frac{e}{2}}(X)+1
\]
in $E^{\times}_{r}/E^{\times}_{r+}$, we have 
\[
\Nr_{E/E'_{\bfi'}}\bigl((X+1)(\sigma^{\frac{e}{2}}(X)+1)^{-1}\bigr)
\equiv
\Tr_{E/E'_{\bfi'}}\bigl(X-\sigma^{\frac{e}{2}}(X)\bigr)+1
\]
in ${E'}_{\bfi',r}^{\times}/{E'}_{\bfi',r+}^{\times}$.
By recalling that $r$ is given by $\frac{2s+1}{e}$ and $\sigma^{\frac{e}{2}}$ acts on $E$ via 
\[
\sigma^{\frac{e}{2}}|_{\mu_{E}}\equiv \mathrm{id}
\quad
\text{and}
\quad
\sigma^{\frac{e}{2}}(\varpi_{E})=-\varpi_{E},
\]
we get 
\[
\Tr_{E/E'_{\bfi'}}\bigl(X-\sigma^{\frac{e}{2}}(X)\bigr)+1
\equiv
2\Tr_{E/E'_{\bfi'}}(X)+1
\]
in ${E'}_{\bfi',r}^{\times}/{E'}_{\bfi',r+}^{\times}$.
Therefore, for any $X\in E_{r}/E_{r+}$, we have
\[
\xi'_{\bfi'}\bigl(2\Tr_{E/E'_{\bfi'}}(X)+1\bigr)
=
\psi_{E}(\bar{a}_{\alpha}\cdot X).
\]
On the other hand, by the definitions of $\alpha^{(\bfi')}(\varpi_{E})$ and $\zeta^{(\bfi')}(\varpi_{E})$, the left-hand side equals 
\begin{align*}
\psi_{E'_{\bfi'}}\bigl(\alpha^{(\bfi')}(\varpi_{E})\cdot 2\Tr_{E/E'_{\bfi'}}(X)\bigr)
&=
\psi_{E}\bigl(\alpha^{(\bfi')}(\varpi_{E})\cdot 2X\bigr)\\
&=
\psi_{E}\bigl(\varpi_{E}^{-(2s+1)}\zeta^{(\bfi')}(\varpi_{E})\cdot 2X\bigr).
\end{align*}
In summary, we have the equality
\[
\psi_{E}(\bar{a}_{\alpha}\cdot X)
=
\psi_{E}\bigl(\varpi_{E}^{-(2s+1)}\zeta^{(\bfi')}(\varpi_{E})\cdot 2X\bigr)
\]
for any $X\in E_{r}/E_{r+}$.
Therefore we have
\begin{equation}\label{eq:a-zeta}
\bar{a}_{\alpha}
\equiv
2\varpi_{E}^{-(2s+1)}\zeta^{(\bfi')}(\varpi_{E})
\quad\text{in}\quad E_{-r}/E_{-r+}.
\end{equation}

Now let us complete the proof.
By recalling that $\chi_{\Tam,\alpha}$ is given by the nontrivial quadratic character on $\mu_{E}$, the equality (\ref{eq:a-zeta}) tells us
\begin{align*}
\chi_{\Tam,\alpha}^{-1}(2a_{\alpha})
&=
\chi_{\Tam,\alpha}^{-1}\bigl(4\varpi_{E}^{-(2s+1)}\zeta^{(\bfi')}(\varpi_{E})\bigr)\\
&=
\chi_{\Tam,\alpha}(\varpi_{E})^{2s+1}\begin{pmatrix}\frac{\zeta^{(\bfi')}(\varpi_{E})}{k_{E}^{\times}}\end{pmatrix}^{-1}.
\end{align*}
Then, by the equality (\ref{eq:Tam-varpi}) and noting that 
\[
\chi_{\Tam,\alpha}(\varpi_{E})^{2}
=
\begin{pmatrix}\frac{-1}{q^{f}}\end{pmatrix}
\]
(this also can be deduced from (\ref{eq:Tam-varpi})), we get
\begin{align*}
\chi_{\Tam,\alpha}^{-1}(2a_{\alpha})
&=
\begin{pmatrix}\frac{-1}{q^{f}}\end{pmatrix}^{s}
\begin{pmatrix}\frac{\zeta^{(\bfi')}(\varpi_{E})}{k_{E}^{\times}}\end{pmatrix}\mfn(\psi_{K_{l-1}})
\begin{pmatrix}\frac{(-1)^{s}m}{q^{f}}\end{pmatrix}
\begin{pmatrix}\frac{\zeta^{(\bfi')}(\varpi_{E})}{k_{E}^{\times}}\end{pmatrix}^{-1}\\
&=
\mfn(\psi_{K_{l-1}})
\begin{pmatrix}\frac{m}{q^{f}}\end{pmatrix}.
\end{align*}
Thus we get $\chi_{\Tam,\alpha}^{-1}(2a_{\alpha})=\chi_{\Kal,\alpha}(2a_{\alpha})$ by the equality (\ref{eq:chi-Kal}) and this completes the proof.
\end{proof}

\appendix
\section{Comparison of the two constructions of supercuspidal representations}\label{sec:BH-Kal}
In this appendix we show that if a tame elliptic regular pair $(\bfS, \xi)$ of $\G=\GL_{n}$ corresponds to an $F$-admissible pair $(E, \xi)$
then the regular supercuspidal representation $\pi_{(\bfS,\xi)}^{\KY}$
is isomorphic to the essentially tame supercuspidal representation $\pi_{(E,\xi)}^{\BH}$.
This should be well-known to experts, but we record some details here.

In both the constructions, one uses $\xi$ to define an open compact-modulo-center subgroup of $\G(F)$ and an irreducible representation,
and then takes the compact induction to obtain the irreducible supercuspidal representation associated to the pair.
Apart from the difference in the languages used---one based on orders and radicals defined by means of lattices, the other based on the Bruhat--Tits theory and the Moy--Prasad filtrations---each step of the two constructions proceeds
in almost the same way, except at one place.
Thus the main point of this appendix is the comparison of the ``only debatable step" (\cite[695 page, Remark 1]{MR2138141}) in the construction of $\pi_{(E,\xi)}^{\BH}$ with the corresponding step in that of $\pi_{(\bfS,\xi)}^{\KY}$ using the Heisenberg--Weil representation (see Remark \ref{rem:debatable}).

We remark that we found \cite[Section 5.2]{MR3509939}, \cite{Mayeux:2017aa}, and \cite{Mayeux:2020aa} particularly helpful in preparing this appendix.

Let $(\bfS,\xi)$ be a tame elliptic regular pair of $\G$ which corresponds to an $F$-admissible pair $(E, \xi)$.

We first review the construction of $\pi_{(E,\xi)}^{\BH}$ (\cite[Section 2]
{MR2138141} and \cite[Section 4]{MR2679700}), largely following \cite[Section 5.2]{MR3509939}.

\begin{description}
	\item[Definition of open subgroups] 
	We fix an $F$-basis of $E$ to regard $\mathrm{Mat}_n(F)\cong\End_{F}(E)$ 
	(in particular $E\subset \mathrm{Mat}_n(F)$).
	Recall from Section \ref{subsec:Heisen} that 
	the pair $(E, \xi)$ gives rise to sequences of subfields and integers:
	\[
	F=E_d\subsetneq E_{d-1}\subsetneq \cdots \subsetneq E_0 \subset E_{-1}=E,
	\quad t_d \geq t_{d-1}> \cdots >t_0>t_{-1}=0,
	\] 
	orders and radicals:
	\[
	\End_{E_i}(E)\supset \mfA_{i} \supset \mfP_{i},
	\]
	and various subgroups of $\G(F)\cong\Aut_F(E)$:
	\begin{align*}
	U_{\mfA_{i}}^k&:=1+\mfP_{i}^{k} \quad (k\in \Z_{>0}), \\
	H^1&:=U_{\mfA_0}^1U_{\mfA_1}^{h_0}\cdots U_{\mfA_{d-1}}^{h_{d-2}}U_{\mfA_{d}}^{h_{d-1}}, \\
	J^1&:=U_{\mfA_0}^1U_{\mfA_1}^{j_0}\cdots U_{\mfA_{d-1}}^{j_{d-2}}U_{\mfA_{d}}^{j_{d-1}}
	\end{align*}
	(see Section \ref{subsec:Heisen} for the definition of $h_i$ and $j_i$ using $t_i$).
	We need two more subgroups:
	\begin{align*}
	J^0:=\mfA_0^{\times}J^1, \quad \mathbf{J}:=E^{\times}J^0=E_{0}^{\times}J^0.
	\end{align*}
	\item[Definition of a character $\theta$ on $H^1$]
	From $\xi$ (or more precisely its restriction to $U_E^1$)
	we construct a character $\theta$ on $H^1$.
	Bushnell--Henniart's construction
	is somewhat abstract
	but here we follow
	\cite[1731 page, (i)]{MR3509939} and express $\theta$ in more down-to-earth terms.
	
	Recall from Section \ref{subsec:Howe} that we have a factorization of $\xi$
	in terms of characters $\xi_i$ on $E_i^{\times}$:
	\[
	\xi=\xi_{-1}\cdot (\xi_0 \circ \Nr_{E/E_0}) \cdots (\xi_d \circ \Nr_{E/E_d}).
	\]
	Among other properties, these characters satisfy the following:
	\begin{itemize}
		\item For $i=-1$, the character $\xi_{-1}$ is tamely ramified.
		\item For $0\leq i\leq d-1$, the $E$-level of $\xi_i \circ \Nr_{E/E_i}$ is $t_i$.
	\end{itemize}
	We use $\xi_i$ for $0\leq i\leq d$ to define $\theta$.
	Let $c_i\in \mfp_E^{-t_i}\cap E_i$ be an element such that 
	\[
	\xi_i(1+x)=\psi_F(\Tr_{E_{i}/F}(c_ix))
	\]
	for $x\in \mfp_E^{h_i}\cap E_i$.
	Note that the coset $c_i+(\mfp_E^{1-h_i}\cap E_i)$ is well-defined.
	Putting $A_i:=\End_{E_i}(E)\subset \End_{F}(E)$,
	we define a character $\psi_{c_i}$ on $U_{\mfA_{d}}^{h_i}$ by
	\[
	\psi_{c_i}(1+x):=\psi_{F}(\tr_{A_d/F}(c_ix))
	\]
	for $x\in \mfP_{d}^{h_i}$.
	With $\psi_{c_i}$, we define characters $\theta_i$ on $U_{\mfA_{i}}^{h_{i-1}}U_{\mfA_{i+1}}^{h_i}\cdots U_{\mfA_{d}}^{h_{d-1}}$ for $0\leq i\leq d$ inductively from $i=d$ as follows (here we put $h_{-1}:=1$).
	We define $\theta_d$ by
	\[
	\theta_d:=\xi_d \circ \det |_{U_{\mfA_{d}}^{h_{d-1}}}.
	\]
	Inductively, $\theta_i$ is defined using $\theta_{i+1}$ by
	\begin{align*}
	\theta_i&=\psi_{c_i}\theta_{i+1} \quad \text{on $U_{\mfA_{i+1}}^{h_i}\cdots U_{\mfA_{d}}^{h_{d-1}}$}, \\
	\theta_i&=(\xi_i \circ {\textstyle\det_{i}})\cdot (\xi_{i+1} \circ {\textstyle\det_{i+1}}) \cdots (\xi_d \circ {\textstyle\det_{d}}) \quad \text{on $U_{\mfA_{i}}^{h_{i-1}}$},
	\end{align*}
	where, letting $n_i:=[E:E_i]$, we write $\det_i\colon A_i^{\times} \cong \GL_{n_{i}}(E_i)\to E_{i}^{\times}$ for 
	the determinant map.
	Finally, we put $\theta:=\theta_0$.
	
	It can be checked that $\theta$ does not depend on the choice of a factorization of $\xi$ and only depends on $\xi|_{U_E^1}$.
	
	\item[Definition of an irreducible representation $\eta$ of $J^1$]
	There exists a unique representation $\eta$ of $J^1$ containing $\theta$ (see \cite[1732 page, (ii)]{MR3509939}).
	We only remark that the proof is based on the fact that $J^1/\Ker \theta$ is a Heisenberg $p$-group and $H^1/\Ker \theta$ is its center
	(see, for example,  \cite[Section 2.3]{MR2138141}). 
	\item[Definition of an irreducible representation $\Lambda_{\mathrm{w}}$ of $\mathbf{J}$]
	Recall that we fixed a uniformizer $\varpi_{F}$ of $F$.
	We extend $\eta$ to an irreducible representation $\Lambda_{\mathrm{w}}$ of
	$\mathbf{J}$ by the following conditions (the existence and the uniqueness of such an extension are proved in \cite[Section 2.3, Lemmas 1 and 2]{MR2138141}):
	\begin{description}
		\item[(Ext1)] \label{cond:J1} The restriction $\Lambda_{\mathrm{w}} |_{J^1}$ is isomorphic to $\eta$.
		\item[(Ext2)] The restriction $\Lambda_{\mathrm{w}}|_{J^{0}}$ is intertwined by $A_0^{\times}$, i.e., for any $a\in A_0^{\times}$, we have 
		\[\Hom_{J^{0}\cap J^{0, a}}\left( \Lambda_{\mathrm{w}}|_{J^{0}\cap J^{0, a}}, \Lambda_{\mathrm{w}}^a|_{J^{0}\cap J^{0, a}}\right) \neq 0,\]
		where $J^{0, a}=a^{-1}J^0a$ is the conjugate and $\Lambda_{\mathrm{w}}^a$
		is the representation of $J^{0, a}$ obtained as the conjugation of $\Lambda_{\mathrm{w}}$.
		\item[(Ext3)] \label{cond:varpi} We have $\varpi_{F} \in \Ker \Lambda_{\mathrm{w}}$. In particular, $\det \Lambda_{\mathrm{w}}$ has finite order.
		\item[(Ext4)] \label{cond:detorder} The determinant character $\det \Lambda_{\mathrm{w}}$ has $p$-power order.
	\end{description}
	Note that $\Lambda_{\mathrm{w}}$ only depends on $\xi|_{U_E^1}$ and the choice of $\varpi_{F}$.	
	\begin{rem}\label{rem:Ext2}
		In fact, (Ext2) is not necessary for the uniqueness.
		Indeed, suppose that $\Lambda_1$ and $\Lambda_2$ both satisfy
		the above conditions except for (Ext2).
		By (Ext1), the restrictions $\Lambda_1|_{J^0}$ and $\Lambda_2|_{J^0}$ only differ by a twist with a character $\chi$ of $J^0$ trivial on $J^1$.
		As $J^0/J^1$ is isomorphic to $\GL_{n_0}(k_{E_0})$,
		the character $\chi$ factors through the determinant map
		(note that $p$ is odd and thus the abelianization of $\GL_{n_0}(k_{E_0})$ is $k_{E_0}^{\times}$),
		and in particular has order prime to $p$.
		As $\dim \eta$ is a power of $p$,
		(Ext4) implies that $\chi$ is trivial and hence $\Lambda_1|_{J^0}$ and $\Lambda_2|_{J^0}$ are isomorphic.
		Similarly, we can further show that $\Lambda_1$ and $\Lambda_2$ are isomorphic as representations of $\J$
		by using (Ext3) and noting that $\J/\langle \varpi_{F}, J^0\rangle$ is a cyclic group of order $e(E_0/F)$, which is prime to $p$.
				
		Later we will prove that an irreducible representation of $\J$ is 
		isomorphic to $\Lambda_{\mathrm{w}}$ by checking that it satisfies (Ext1), (Ext3), and (Ext4).
		\end{rem}
	\begin{rem}\label{rem:debatable}
		This part is described as the ``only debatable step in the construction'' in \cite[695 page, Remark 1]{MR2138141}.
		The crux of this appendix lies in showing that this definition of $\Lambda_{\mathrm{w}}$ is consistent with the construction of $\pi_{(\bfS,\xi)}^{\KY}$ in a suitable sense (Proposition \ref{prop:Lambda_0}).
	\end{rem}
	\item[Definition of an irreducible representation $\Lambda_{\mathrm{t}}$ of $\mathbf{J}$ using $\xi_{\mathrm{t}}$]	
	First we take a factorization $\xi=\xi_{\mathrm{t}}\xi_{\mathrm{w}}$
	of the character $\xi$ characterized by the following conditions:\begin{itemize}
		\item The character $\xi_{\mathrm{t}}$ is tame. In other words, $\xi_{\mathrm{w}}|_{U_E^1}=\xi|_{U_E^1}$.
		\item We have $\varpi_{F} \in \Ker \xi_{\mathrm{w}}$. In particular, $\xi_{\mathrm{w}}$ has finite order.
		\item The character $\xi_{\mathrm{w}}$ has $p$-power order. In particular, $\mu_{E} \subset \Ker \xi_{\mathrm{w}}$.
	\end{itemize}
	It is easily checked that such a factorization $\xi=\xi_{\mathrm{t}}\xi_{\mathrm{w}}$ uniquely exists.
	We stress that the uniformizer $\varpi_{F}$ here is the same as the one in (Ext3).
	
	Then we construct an irreducible representation $\Lambda_{\mathrm{t}}$ 
	of $\mathbf{J}$ associated to $\xi_{\mathrm{t}}$ as follows.
	As $\xi_{\mathrm{t}}$ is tame, $\xi_{\mathrm{t}}|_{\mcO_E^{\times}}$ is
	the inflation of a character $\overline{\xi}_{\mathrm{t}}$ on $k_E^{\times}$.
	By the Green parametrization \cite{MR72878}, $\overline{\xi}_{\mathrm{t}}$ yields an irreducible cuspidal representation $\overline{\lambda}$ of $\GL_{n_0}(k_{E_0})\cong J^0/J^1$.
	Let $\lambda$ be the inflation of $\overline{\lambda}$ to $J^0$.
	We define an extension $\Lambda_{\mathrm{t}}$ of $\lambda$ to $\mathbf{J}=E_{0}^{\times}J^0$ by requiring that $\Lambda_{\mathrm{t}}|_{E_0^{\times}}$ is $\xi_{\mathrm{t}}|_{E_0^{\times}}$-isotypic.
	
	\item[Definition of $\pi_{(E,\xi)}^{\BH}$]
	We put $\Lambda_{\xi}:=\Lambda_{\mathrm{t}}\otimes \Lambda_{\mathrm{w}}$
	and finally define 
	$\pi_{(E,\xi)}^{\BH}:=\cInd_{\mathbf{J}}^{\G(F)}\Lambda_{\xi}$.
	We can easily check that $\Lambda_{\xi}$ is independent of the choice of $\varpi_{F} \in F$
	and that $\pi_{(E,\xi)}^{\BH}$ is independent of the choice of an $F$-basis of $E$.
	Thus in the following we assume that 
	the embedding $E\hookrightarrow \mathrm{Mat}_n(F)$ induced by the $F$-basis restricts to the inclusion 
	$E^{\times}\cong\bfS(F)\hookrightarrow \G(F)=\GL_n(F)$.
\end{description}

On the other hand,
Kaletha's construction starts with taking a regular Yu-datum
sent to $(\bfS, \xi)$ by the map in Proposition \ref{prop:reparametrization} (\cite[Proposition 3.7.8]{MR4013740})
and 
then Yu's construction \cite{MR1824988} yields an open subgroup $K$ of $\G(F)$ 
and an irreducible representation $\kappa$ of $K$ which give rise to a supercuspidal representation $\pi_{(\bfS,\xi)}^{\KY}:=\cInd_{K}^{\G(F)}\kappa$. 
As in the proof of the surjectivity of \cite[Proposition 3.7.8]{MR4013740}, 
we use a Howe factorization of $(\bfS, \xi)$ as in Section \ref{subsec:Howe} to construct a regular Yu-datum as above.
We write $\phi_{i} \colon \G^{i}(F)\to \C^{\times}$ ($i=-1, \dots, d$)
and $\Psi=(\G^0\subsetneq \G^1 \subsetneq \cdots \subsetneq \G^d, \pi_{-1}, (\phi_{0}, \dots, \phi_{d}))$ for the Howe factorization and the associated regular Yu-datum.
In particular, we have the following:
\begin{itemize}
	\item each $\G^i$ is as in Section \ref{subsec:Howe},
	\item $\phi_i=\xi_i \circ \textstyle{\det_i}$ for $i=-1, \dots, d$ with $\xi_i$ as in the definition of $\theta$,
	\item $\xi =\prod_{i=-1}^{d}\phi_{i}|_{\bfS(F)}$, or equivalently 
	$\xi =\prod_{i=-1}^{d}(\xi_i \circ \Nr_{E/E_{i}})$, and
	\item $\pi_{-1}:= \pi_{(\bfS, \phi_{-1})}^{\G^{0}}$, where we write $\pi_{(\bfS, \phi_{-1})}^{\G^{0}}$ for the irreducible depth zero supercuspidal representation of $\G^{0}(F)$ associated to $(\bfS, \phi_{-1})$ (denoted by $\pi_{(\bfS, \phi_{-1})}$ in \cite[Section 3.4]{MR4013740}).
\end{itemize}
According to \cite[Section 3.4]{MR2431732}, 
we may express $\kappa$ as a tensor product $\bigotimes_{i=-1}^{d} \kappa_{i}$ of irreducible representations, where each $\kappa_{i}$ only depends on $\phi_{i}$ for $0\leq i\leq d$ and $\kappa_{-1}$ only depends on $\pi_{-1}$.
As this expression is convenient for our argument, 
below we often cite \cite{MR2431732}.

In the following, we shall see that $K=\mathbf{J}$ holds 
and moreover show that if we take 
a suitable regular Yu-datum $\underline{\Psi}$ which is $\G$-equivalent to $\Psi$,
then we have $\kappa_{-1} \cong \Lambda_{\mathrm{t}}$ 
and $\bigotimes_{i=0}^{d}\kappa_{i} \cong \Lambda_{\mathrm{w}}$, 
so that
$\pi_{(\bfS,\xi)}^{\KY}=\cInd_{K}^{\G(F)}\kappa\cong\cInd_{\mathbf{J}}^{\G(F)}\Lambda_{\xi}=\pi_{(E,\xi)}^{\BH}$.

In \cite[591 page]{MR1824988} the open subgroup $K\subset \G(F)$ is defined 
in terms of the Moy--Prasad filtration as:
\[
K:=K^d:=\G^0(F)_{[x]}\G^1(F)_{x,s_0}\cdots \G^d(F)_{x,s_{d-1}},
\]
where $x$ is a point of the Bruhat--Tits building of $\G^0$ associated to $\bfS \subset \G$ (see Section \ref{subsec:Heisen}) and $s_{i}:=\frac{r_{i}}{2}=\frac{\depth(\phi_{i})}{2}$ for $i=0,\ldots,d-1$.

Recalling that we arranged the embedding $E\hookrightarrow \mathrm{Mat}_n(F)$ to be compatible with the inclusion $\bfS \subset \G$, we can check $K=\mathbf{J}$ by using the work \cite{MR1888474} of Broussous--Lemaire comparing the lattice filtration and the Moy--Prasad filtration as in the proof of Proposition \ref{prop:Heisenberg}.

Similarly we find that the compact open subgroup $K_+^d$:
\[
K_{+}^d:=\G^0(F)_{x,0+}\G^1(F)_{x,s_{0}+}\cdots \G^d(F)_{x,s_{d-1}+}
\]
is nothing but $H^1$.

The conditions required of the factorization $\xi=\xi_{\mathrm{t}}\xi_{\mathrm{w}}$ are not exactly consistent 
with those imposed on a Howe factorization in Definition \ref{defn:Howe} (\cite[Definition 3.6.2]{MR4013740}),
so we slightly modify the latter factorization in the following proposition
to eventually obtain a $\G$-equivalent regular Yu-datum.
\begin{prop} \label{prop:choiceofdatum}
	There exists a regular Yu-datum
	$\underline{\Psi}=(\G^0\subsetneq \G^1 \subsetneq \cdots \subsetneq \G^d, \underline{\pi}_{-1}, (\underline{\phi}_{0}, \dots, \underline{\phi}_{d}))
	$
	which is $\G$-equivalent to $\Psi$ and satisfies the following properties:
	\begin{itemize}
		\item[(i)] $\underline{\pi}_{-1}\cong \pi_{(\bfS, \xi_{\mathrm{t}})}^{\G^{0}}$,
		\item[(ii)] $\prod_{i=0}^{d}\underline{\phi}_{i}|_{\bfS(F)}=\xi_{\mathrm{w}}$,
		\item[(iii)] $\underline{\phi}_{i}|_{\G^i(F)_{x, 0+}}=\phi_{i}|_{\G^i(F)_{x, 0+}}$ for $i=0, \dots, d$, and
		\item[(iv)] $\underline{\phi}_{i}$ has finite $p$-power order for $i=0, \dots, d$.
	\end{itemize}
\end{prop}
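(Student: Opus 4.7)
The plan is to construct $\underline{\Psi}$ by keeping the tame twisted Levi tower $\G^{0}\subsetneq\cdots\subsetneq\G^{d}$ of $\Psi$ unchanged, replacing $\pi_{-1}$ by $\pi_{(\bfS,\xi_{\mathrm{t}})}^{\G^{0}}$, and modifying each $\phi_{i}$ for $i\geq 0$ so that it becomes of finite $p$-power order without changing its restriction to the pro-$p$ radical $\G^{i}(F)_{x,0+}$. Once the construction is in place, (i)--(iv) will be verified essentially by direct calculation, and $\G$-equivalence with $\Psi$ will follow from the bijection in Proposition \ref{prop:reparametrization}.

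Concretely, each $\phi_{i}$ with $i\geq0$ is trivial on $\G^{i}_{\mathrm{sc}}(F)=\SL_{n_{i}}(E_{i})$ and hence factors as $\phi_{i}=\chi_{i}\circ\det_{i}$ for some character $\chi_{i}\colon E_{i}^{\times}\to\C^{\times}$. Using the decomposition $E_{i}^{\times}=\varpi_{E_{i}}^{\Z}\times\mu_{E_{i}}\times U_{E_{i}}^{1}$, I would define $\underline{\chi}_{i}$ by
\[
\underline{\chi}_{i}|_{U_{E_{i}}^{1}}:=\chi_{i}|_{U_{E_{i}}^{1}},\quad \underline{\chi}_{i}|_{\mu_{E_{i}}}:=\mathbbm{1},\quad \underline{\chi}_{i}(\varpi_{E_{i}}):=1,
\]
and put $\underline{\phi}_{i}:=\underline{\chi}_{i}\circ\det_{i}$. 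Since $U_{E_{i}}^{1}$ is pro-$p$, its continuous characters automatically factor through a pro-$p$ quotient, so $\underline{\chi}_{i}$ has finite $p$-power order, giving (iv). The inclusion $\det_{i}(\G^{i}(F)_{x,0+})\subset U_{E_{i}}^{1}$ together with the first defining equality of $\underline{\chi}_{i}$ gives (iii). For the depth zero piece I would simply set $\underline{\pi}_{-1}:=\pi_{(\bfS,\xi_{\mathrm{t}})}^{\G^{0}}$, which is (i).

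For (ii), I would compute $\prod_{i=0}^{d}\underline{\phi}_{i}|_{\bfS(F)}=\prod_{i=0}^{d}\underline{\chi}_{i}\circ\Nr_{E/E_{i}}$ on each of the three factors of $E^{\times}\cong\bfS(F)$. On $U_{E}^{1}$, $\Nr_{E/E_{i}}$ lands in $U_{E_{i}}^{1}$, so the product equals $\prod_{i=0}^{d}\phi_{i}|_{U_{E}^{1}}=\xi|_{U_{E}^{1}}=\xi_{\mathrm{w}}|_{U_{E}^{1}}$, using that $\phi_{-1}$ is trivial on $\bfS(F)_{0+}$. On $\mu_{E}$, $\Nr_{E/E_{i}}(\mu_{E})\subseteq\mu_{E_{i}}$ where $\underline{\chi}_{i}$ is trivial, matching $\xi_{\mathrm{w}}|_{\mu_{E}}=\mathbbm{1}$. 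On $\varpi_{E}$, tame ramification theory gives $\Nr_{E/E_{i}}(\varpi_{E})=\eta_{i}\varpi_{E_{i}}^{f(E/E_{i})}$ for some $\eta_{i}\in\mu_{E_{i}}$, so $\underline{\chi}_{i}(\Nr_{E/E_{i}}(\varpi_{E}))=1$; this matches $\xi_{\mathrm{w}}(\varpi_{E})=1$, which itself follows from $\xi_{\mathrm{w}}(\varpi_{F})=1$, the tame relation $\varpi_{F}\in\mu_{E}\cdot\varpi_{E}^{e}$, and coprimality of $e$ with the $p$-power order of $\xi_{\mathrm{w}}$.

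It remains to check that $\underline{\Psi}$ is genuinely a regular Yu-datum and is $\G$-equivalent to $\Psi$. The $\G^{i+1}$-genericity of $\underline{\phi}_{i}$ of depth $r_{i}$ is a condition on the induced character of $\G^{i}(F)_{x,r_{i}}/\G^{i}(F)_{x,r_{i}+}$ (see \cite[Section 9]{MR1824988}), which by (iii) coincides with that induced by $\phi_{i}$, so the property is inherited. That $(\bfS,\xi_{\mathrm{t}})$ is a tame elliptic regular pair for $\G^{0}$ follows from the same property for $(\bfS,\xi)$: on the depth zero quotient $\bfS(F)_{0}/\bfS(F)_{0+}$ (whose order is prime to $p$) the wild part $\xi_{\mathrm{w}}$ is trivial, so $\xi_{\mathrm{t}}$ and $\xi$ induce the same character, and the regularity condition only depends on this character. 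Finally, by Proposition \ref{prop:reparametrization} the image of $\underline{\Psi}$ under the reparametrization map is $(\bfS,\xi_{\mathrm{t}}\cdot\prod_{i=0}^{d}\underline{\phi}_{i}|_{\bfS(F)})=(\bfS,\xi_{\mathrm{t}}\xi_{\mathrm{w}})=(\bfS,\xi)$, which coincides with the image of $\Psi$; injectivity of the bijection then forces $\underline{\Psi}$ and $\Psi$ to be $\G$-equivalent. The main technical point I anticipate is the depth zero regularity check for $(\bfS,\xi_{\mathrm{t}})$ in $\G^{0}$ and the precise reconstruction of the ``$\phi_{-1}$-character'' attached to $\underline{\pi}_{-1}$, both of which require unpacking Kaletha's conventions but should be essentially formal.
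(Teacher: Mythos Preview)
Your approach parallels the paper's, but with two differences. For $\G$-equivalence you invoke the bijection of Proposition~\ref{prop:reparametrization} rather than checking the refactorization conditions F0--F2 directly as the paper does; this is a valid and slightly cleaner shortcut, though you must then verify separately that $\underline{\Psi}$ is a genuine Yu-datum (the paper obtains this from \cite[Lemma~4.22]{MR2431732} as a byproduct of refactorization).

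The more substantive difference, and the place where your argument has a gap, is in the construction of $\underline{\chi}_i$ and the verification of (ii) at $\varpi_E$. You define $\underline{\chi}_i$ on $E_i^\times$ using an unspecified uniformizer $\varpi_{E_i}$, and then assert that $\Nr_{E/E_i}(\varpi_E)=\eta_i\varpi_{E_i}^{f(E/E_i)}$ with $\eta_i\in\mu_{E_i}$. This is not automatic: for an arbitrary $\varpi_{E_i}$ the unit $\Nr_{E/E_i}(\varpi_E)/\varpi_{E_i}^{f(E/E_i)}$ can have a nontrivial $U_{E_i}^1$-component, in which case $\underline{\chi}_i(\Nr_{E/E_i}(\varpi_E))\neq 1$ and (ii) fails. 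The paper sidesteps this by reversing the order of construction: it first defines characters $\zeta_i$ on $E^\times$ normalized by $\zeta_i(\varpi_F)=1$ and $p$-power order, so that $\prod_i\zeta_i=\xi_{\mathrm{w}}$ holds by construction, and only afterwards factors each $\zeta_i$ through $\Nr_{E/E_i}$ to obtain $\underline{\xi}_i$ on $E_i^\times$ (the nonuniqueness of this factoring is harmless for (ii)). Your route can be salvaged by choosing each $\varpi_{E_i}$ so that $\varpi_F\in\mu_{E_i}\varpi_{E_i}^{e(E_i/F)}$, which is always possible since $E_i/F$ is tame: then $\Nr_{E/E_i}(\varpi_E)^e=\Nr_{E/E_i}(\zeta_{E/F}\varpi_F)\in\mu_{E_i}\varpi_{E_i}^{e\cdot f(E/E_i)}$, and since $p\nmid e$ the $U_{E_i}^1$-part of $\Nr_{E/E_i}(\varpi_E)/\varpi_{E_i}^{f(E/E_i)}$ is forced to be trivial.
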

\begin{proof}
	Let us define $\underline{\pi}_{-1}$ and $\underline{\phi}_{i}$ for $i=0, \dots, d$, and thus $\underline{\Psi}=(\G^0\subsetneq \G^1 \subsetneq \cdots \subsetneq \G^d, \underline{\pi}_{-1}, (\underline{\phi}_{0}, \dots, \underline{\phi}_{d}))$.
	First we put 
	\[
	\underline{\phi}_{-1}:=\xi_{\mathrm{t}}, \quad \underline{\pi}_{-1}:=\pi_{(\bfS, \underline{\phi}_{-1})}^{\G^{0}}.
	\]
	Next let us define a character $\underline{\phi}_{i}$ for each $i=0, \dots, d$ using $\phi_{i}$.
	For this we need some auxiliary characters.
	Recall that the characters $\xi_i$ on $E_i^{\times}$ appearing in the definition of $\theta$
	satisfy $\phi_i=\xi_i \circ \det_i$ 
	and $\xi =\prod_{i=-1}^{d}(\xi_i \circ \Nr_{E/E_{i}})$.
		
	We define characters $\zeta_i \colon E^{\times}\to \C^{\times}$ for $i=0, \dots, d$ by the following conditions:
	\begin{itemize}
		\item We have $\zeta_i |_{U_E^1}=(\xi_i \circ \Nr_{E/E_i})|_{U_E^1}$.
		\item We have $\varpi_{F} \in \Ker \zeta_i$.
		\item The character $\zeta_i$ has finite $p$-power order.
	\end{itemize}
	Again these conditions determine the characters $\zeta_i$ uniquely.
	
	The first and third conditions imply that each $\zeta_i$ factors through $\Nr_{E/E_i}$.
	We take $\underline{\xi}_{i}$ to be a character on $E_i^{\times}$ such that $\zeta_i=\underline{\xi}_{i} \circ \Nr_{E/E_i}$ for each $i=0, \dots, d$.
	We assume $\underline{\xi}_{i}$ to have $p$-power order.
	(Note that, unlike before, there are in general several possibilities of such $\underline{\xi}_{i}$ because the index of $\Nr_{E/E_i}(E^{\times})$ in $E_i^{\times}$ may be divisible by $p$.)
	Also, if $\phi_d=\mathbbm{1}$ and thus $\zeta_d=\mathbbm{1}$, then we arrange $\underline{\xi}_{d}=\mathbbm{1}$.
	
	Finally we put 
	\[
	\underline{\phi}_{i}:=\underline{\xi}_{i} \circ \textstyle\det_{i}
	\]
	for $i=0, \dots, d$.
	In particular, if $\phi_d=\mathbbm{1}$, then $\underline{\phi}_{d}=\mathbbm{1}$.
	
	Before proving that $\underline{\Psi}$ is indeed a regular Yu-datum
	$\G$-equivalent to $\Psi$, let us check the required properties (i)-(iv).
	The property (i) is immediate from the definition of $\underline{\pi}_{-1}$.
	For (ii) it is easy to check that the character
	\[
	\prod_{i=0}^d\underline{\phi}_{i}|_{\bfS(F)}=\prod_{i=0}^d(\underline{\xi}_{i} \circ \Nr_{E/E_{i}})=\prod_{i=0}^d\zeta_i
	\]
	satisfies the characterizing conditions of $\xi_{\mathrm{w}}$
	by using those of $\zeta_i$.
	To see (iii), we note that we have $(\underline{\xi}_{i} \circ \Nr_{E/E_i})|_{U_E^1}=\zeta_i |_{U_E^1}=(\xi_i \circ \Nr_{E/E_i})|_{U_E^1}$ and hence
	$\underline{\xi}_{i}|_{U_{E_i}^1}=\xi_i|_{U_{E_i}^1}$.
	Thus the equalities $\underline{\phi}_i=\underline{\xi}_i \circ \det_i$, $\phi_i=\xi_i \circ \det_i$ and the inclusion $\det_i(\G^i(F)_{x, 0+})\subset U_{E_i}^{1}$ imply (iii).
	The property (iv) holds since $\underline{\xi}_{i}$ has finite $p$-power order for each $i=0, \dots, d$.
	
	Finally let us prove that 
	$\underline{\Psi}$
	is a regular Yu-datum and is a refactorization of
	$\Psi$
	in the sense of \cite[Definition 4.19]{MR2431732} and reviewed in \cite[1106 page]{MR4013740}.
	Then we are done because by the definition \cite[Definition 6.3]{MR2431732} of $\G$-equivalence, if two Yu-data are refactorizations of each other, then they are in particular $\G$-equivalent.
	We check the conditions F0-F2 in \cite[1106 page]{MR4013740}:
	\begin{description}
		\item[F0] If $\phi_d=\mathbbm{1}$, then $\underline{\phi}_{d}=\mathbbm{1}$ as we arranged above.
		\item[F1] For $i=0, \dots, d$, we put
		\[
		\chi_i \colon \G^i(F) \to \C^{\times}, \quad \chi_i(g):=\prod_{j=i}^{d}\phi_j(g)\underline{\phi}_j(g)^{-1}.
		\]
		As $\xi_j$ and $\underline{\xi}_j$ agree on $U_{E_j}^1$ by (the proof of) (iii), the depth of $\chi_i$ is $0$ (thus at most $r_{i-1}$) for all $i$.
		\item[F2] As $\underline{\phi}_{-1}=\xi_{\mathrm{t}}$ by definition and $\prod_{i=0}^d\underline{\phi}_{i}|_{\bfS(F)}=\xi_{\mathrm{w}}$ by (ii),
		we have $\prod_{i=-1}^{d} \underline{\phi}_{i}|_{\bfS(F)}=\xi=\prod_{i=-1}^{d} \phi_{i}|_{\bfS(F)}$.
		Therefore, we have
		\[
		\underline{\phi}_{-1}=\phi_{-1}\cdot (\phi^{-1}_{-1}\underline{\phi}_{-1})
		=\phi_{-1}\cdot \chi_0|_{\bfS(F)}.
		\]
		Thus \cite[Lemma 3.4.28]{MR4013740} implies
		\[
		\underline{\pi}_{-1}=\pi_{(\bfS, \underline{\phi}_{-1})}^{\G^{0}}=\pi_{(\bfS, \phi_{-1})}^{\G^{0}}\otimes \chi_0=\pi_{-1}\otimes \chi_0
		\] 
		as required.
	\end{description}
	According to \cite[Lemma 4.22]{MR2431732} the conditions F0-F2 also ensure that $\underline{\Psi}$ is again a (generic cuspidal)
	Yu-datum.
	Moreover, as discussed in the paragraph before \cite[Example 3.7.4]{MR4013740}, it is regular.
	
	This completes the proof.
\end{proof}
\begin{prop}
	With the choice of a regular Yu-datum $\underline{\Psi}=(\G^0\subsetneq \G^1 \subsetneq \cdots \subsetneq \G^d, \underline{\pi}_{-1}, (\underline{\phi}_{0}, \dots, \underline{\phi}_{d}))$ as in Proposition \ref{prop:choiceofdatum}, we have $\kappa_{-1}\cong \Lambda_{\mathrm{t}}$.
\end{prop}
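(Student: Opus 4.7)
The strategy is to identify a common building block---the irreducible cuspidal representation $\bar{\lambda}$ of $\GL_{n_0}(k_{E_0})$ associated via the Green parametrization to $\bar{\xi}_{\mathrm{t}}$---and to show that both $\kappa_{-1}$ and $\Lambda_{\mathrm{t}}$, viewed as representations of $K = \J$, are obtained by inflating $\bar{\lambda}$ to $\mfA_0^\times$ and extending to $\J$ with the prescribed $\xi_{\mathrm{t}}|_{E_0^\times}$-isotypic central action.

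First I would identify the relevant open compact subgroups. Under the fixed $F$-basis of $E$, the hereditary order $\mfA_0 = \mfA \cap \End_{E_0}(E)$ is identified with $\mathrm{Mat}_{n_0}(\mcO_{E_0})$; consequently $\mfA_0^\times \cong \GL_{n_0}(\mcO_{E_0})$ coincides with the connected parahoric $\G^0(F)_{x,0}$, and $U_{\mfA_0}^1 \cong \G^0(F)_{x,0+}$. The stabilizer $\G^0(F)_{[x]}$ equals $E_0^\times \mfA_0^\times$, its reductive quotient is $\GL_{n_0}(k_{E_0})$, and the image of $\bfS(F)\cap \G^0(F)_{x,0}$ therein is the elliptic torus $\Res_{k_E/k_{E_0}}\Gm$. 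By the tameness of $E/E_0$, the restriction $\xi_{\mathrm{t}}|_{\mcO_E^\times}$ descends to the character $\bar{\xi}_{\mathrm{t}}$ of $k_E^\times$ figuring in both constructions, and the Green parameter for $\bar{\xi}_{\mathrm{t}}$ is one and the same cuspidal $\bar{\lambda}$ of $\GL_{n_0}(k_{E_0})$ whether viewed via Bushnell--Henniart or Deligne--Lusztig.

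Next I would unpack $\kappa_{-1}$. Since $\underline{\pi}_{-1} \cong \pi_{(\bfS,\xi_{\mathrm{t}})}^{\G^0}$ by Proposition \ref{prop:choiceofdatum}(i), this depth-zero supercuspidal is compactly induced from a representation $\sigma_{-1}$ of $\G^0(F)_{[x]} = E_0^\times \mfA_0^\times$ that extends the inflation of $\bar{\lambda}$ by $\xi_{\mathrm{t}}|_{E_0^\times}$ on $E_0^\times$. According to Hakim--Murnaghan's reformulation of Yu's construction in \cite[Section 3.4]{MR2431732}, $\kappa_{-1}$ is obtained by inflating $\sigma_{-1}$ from $\G^0(F)_{[x]}$ to the whole of $K$ through the natural surjection that kills the higher Moy--Prasad tail $\prod_{i\geq 1} \G^i(F)_{x,s_{i-1}}$. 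Using the identification $K = \J$ and the Broussous--Lemaire dictionary, this tail falls inside $J^1$, so the inflation factors through $\mfA_0^\times J^1 / J^1 \cong \mfA_0^\times / U_{\mfA_0}^1 \cong \GL_{n_0}(k_{E_0})$. In particular, $\kappa_{-1}|_{J^0}$ is the inflation of $\bar{\lambda}$ through $J^0/J^1$, and $\kappa_{-1}|_{E_0^\times}$ is $\xi_{\mathrm{t}}|_{E_0^\times}$-isotypic. But these are exactly the two conditions that characterize $\Lambda_{\mathrm{t}}$, so $\kappa_{-1} \cong \Lambda_{\mathrm{t}}$ as desired.

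The main obstacle will be this second step: translating the Moy--Prasad picture underlying Yu's construction into the order-and-radical picture underlying Bushnell--Henniart. Specifically, one must check carefully, via the Broussous--Lemaire comparison and the arithmetic of the exponents $h_i$ and $j_i$, that the higher Moy--Prasad tail in the decomposition $K = \G^0(F)_{[x]} \cdot \prod_{i\geq 1}\G^i(F)_{x,s_{i-1}}$ lies inside $J^1$, so that the inflation producing $\kappa_{-1}$ really factors through the quotient $\mfA_0^\times / U_{\mfA_0}^1$ as claimed. Once this is confirmed, the comparison reduces to the uniqueness of an irreducible extension of the inflation of $\bar{\lambda}$ from $J^0$ to $\J$ once its restriction to $E_0^\times$ is fixed, which forces $\kappa_{-1} \cong \Lambda_{\mathrm{t}}$.
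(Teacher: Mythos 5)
Your proposal is correct and follows essentially the same route as the paper's own (very terse) proof: unwind both constructions down to the common cuspidal $\bar\lambda$ of $\GL_{n_0}(k_{E_0})$ and match the extensions to $\mathbf{J}$. The one point the paper singles out as requiring comment --- that the Deligne--Lusztig representation used in Kaletha's $\pi_{(\bfS,\xi_{\mathrm t})}^{\G^0}$ and the Green parametrization used by Bushnell--Henniart produce the same $\bar\lambda$, via the Macdonald correspondence --- you assert rather than justify, and the phrase ``by the tameness of $E/E_0$'' should read ``by the tameness of $\xi_{\mathrm t}$'' (what makes $\xi_{\mathrm t}|_{\mcO_E^\times}$ factor through $k_E^\times$ is that it kills $U_E^1$; the relevant structural fact about $E/E_0$ is that it is unramified, so $k_E/k_{E_0}$ has the right degree); otherwise the argument and its bookkeeping match the paper's.
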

\begin{proof}
	This can be seen by directly comparing the construction of $\Lambda_{\mathrm{t}}$ from $\xi_{\mathrm{t}}$ with that of $\kappa_{-1}$ from $\underline{\pi}_{-1}=\pi_{(\bfS, \xi_{\mathrm{t}})}^{\G^{0}}$ (see \cite[Section 3.4]{MR4013740} and \cite[67 page]{MR2431732} for details of the latter one).
	Note that the definition of $\pi_{(\bfS, \xi_{\mathrm{t}})}^{\G^{0}}$ involves the Deligne--Lusztig construction \cite{MR393266} in place of the Green parametrization.
	However, the Deligne--Lusztig construction yields the Macdonald correspondence, which is nothing but the Green parametrization in the case of general linear groups (see \cite[Introduction]{MR393266}).
\end{proof}
\begin{prop} \label{prop:Lambda_0}
	With the choice of a regular Yu-datum $\underline{\Psi}=(\G^0\subsetneq \G^1 \subsetneq \cdots \subsetneq \G^d, \underline{\pi}_{-1}, (\underline{\phi}_{0}, \dots, \underline{\phi}_{d}))$ as in Proposition \ref{prop:choiceofdatum}, we have $\bigotimes_{i=0}^{d}\kappa_{i}\cong \Lambda_{\mathrm{w}}$.
\end{prop}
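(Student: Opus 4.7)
The plan is to invoke the uniqueness result summarized in Remark \ref{rem:Ext2}: an irreducible representation of $\mathbf{J}$ is isomorphic to $\Lambda_{\mathrm{w}}$ as soon as it satisfies (Ext1), (Ext3), and (Ext4). Thus I will verify these three conditions for $\bigotimes_{i=0}^{d}\kappa_{i}$, with our carefully chosen Yu-datum $\underline{\Psi}$. The identification $K=\mathbf{J}$ and $K_+^d=H^1$ (via Broussous--Lemaire) are already in place, so both sides live on the same group and the comparison of characters on $H^1$ makes sense.

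For (Ext1), which is the essential step, I would show that $\bigotimes_{i=0}^{d}\kappa_{i}\bigl|_{H^{1}}$ equals the Bushnell--Henniart character $\theta=\theta_{0}$; then the uniqueness of the Heisenberg representation of $J^{1}/\Ker\theta$ extending the central character forces $\bigotimes_{i=0}^{d}\kappa_{i}\bigl|_{J^{1}}\cong\eta$. Concretely, Yu's construction realizes each $\kappa_{i}|_{H^{1}}$ ($0\le i\le d-1$) as the character $\psi_{c'_{i}}$ attached to the dual Moy--Prasad realization of $\underline{\phi}_{i}$, while $\kappa_{d}|_{H^{1}}=\underline{\phi}_{d}|_{H^{1}}$. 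Property (iii) of Proposition \ref{prop:choiceofdatum} together with $\phi_{i}=\xi_{i}\circ\det_{i}$ implies $\underline{\phi}_{i}|_{\G^{i}(F)_{x,0+}}=(\xi_{i}\circ\det_{i})|_{\G^{i}(F)_{x,0+}}$, so via the Broussous--Lemaire identification the dual realization element $c'_{i}$ coincides with Bushnell--Henniart's $c_{i}\in\mfp_{E}^{-t_{i}}\cap E_{i}$ modulo $\mfp_{E}^{1-h_{i}}\cap E_{i}$. Unfolding the inductive definition of $\theta_{0}$ as $\prod_{i=0}^{d-1}\psi_{c_{i}}\cdot(\xi_{d}\circ\det_{d})|_{H^{1}}$ then yields the desired agreement.

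For (Ext3) and (Ext4), I would use property (iv) of Proposition \ref{prop:choiceofdatum}. Since $\varpi_{F}\in F^{\times}$ is central and lies in $\G^{i}(F)$ for every $i$, it acts on $\kappa_{i}$ through a scalar determined by the central character, which on the $E_{i}^{\times}$-factor is $\underline{\phi}_{i}|_{F^{\times}}$; the product over $i$ gives $\prod_{i=0}^{d}\underline{\phi}_{i}(\varpi_{F})=\xi_{\mathrm{w}}(\varpi_{F})=1$ by condition (ii) and the characterizing properties of $\xi_{\mathrm{w}}$, yielding (Ext3). For (Ext4), the determinant of $\bigotimes_{i=0}^{d}\kappa_{i}$ is the product of the determinants of the individual $\kappa_{i}$; each such determinant is, up to the contribution of the Heisenberg--Weil factor on $J^{1}/H^{1}$, a $p$-power root of unity coming from $\underline{\phi}_{i}$, while the Heisenberg--Weil factor has $p$-power determinant in odd residual characteristic (the Weil representation is realized on an $\F_{p}$-vector space and its determinant is a $p$-power root of unity after projection).

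The main obstacle is the character identification on $H^{1}$ in the proof of (Ext1): one must translate Yu's depth-$r_{i}$ realization of the character $\underline{\phi}_{i}$ in the Moy--Prasad dual to Bushnell--Henniart's lattice-theoretic element $c_{i}$, and verify that the inductive twists $\psi_{c_{i}}\theta_{i+1}$ match the successive restrictions of the Yu tensor factors. Everything else reduces to routine bookkeeping with orders of characters and centers, but this Moy--Prasad-vs-lattice dictionary at each depth is where the real content of the comparison lies.
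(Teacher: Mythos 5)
Your overall strategy is the same as the paper's: appeal to Remark \ref{rem:Ext2} and verify (Ext1), (Ext3), (Ext4) for $\bigotimes_{i=0}^{d}\kappa_i$. The (Ext3) argument is essentially correct. However, there are two genuine gaps.

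First, in the (Ext1) step you slide from ``$\bigotimes\kappa_i|_{H^1}$ equals $\theta$'' to ``$\bigotimes\kappa_i|_{J^1}\cong\eta$.'' The restriction $\bigotimes\kappa_i|_{H^1}$ is a representation of dimension $\dim\bigotimes\kappa_i$, not a character; what one can show (as the paper does via Hakim--Murnaghan's Lemma 3.27) is that it \emph{contains} the character $\prod\underline{\hat{\phi}}_i=\theta$. To upgrade this to an isomorphism $\bigotimes\kappa_i|_{J^1}\cong\eta$, one still needs the dimension identity $\dim\bigotimes_{i=0}^{d}\kappa_i=\dim\eta$; the paper cites Mayeux for this, and your proposal omits it. Without this, the uniqueness of the Heisenberg representation only tells you $\eta$ is a summand. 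Also, Yu's character $\underline{\hat{\phi}}_i$ is not simply $\psi_{c'_i}$ on all of $H^1$: it is $\psi_{\underline{c}_i}$ on $U_{\mfA_{i+1}}^{h_i}\cdots U_{\mfA_d}^{h_{d-1}}$ and $\underline{\xi}_i\circ\det_i$ on the lower part $U_{\mfA_0}^{1}\cdots U_{\mfA_i}^{h_{i-1}}$; the identification with $\theta=\theta_0$ requires tracking both pieces of each $\underline{\hat{\phi}}_i$ against the inductive definition of $\theta_i$.

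Second, the (Ext4) argument is circular as stated: you assert that the Heisenberg--Weil factor ``has $p$-power determinant in odd residual characteristic'' because it ``is realized on an $\F_p$-vector space and its determinant is a $p$-power root of unity after projection,'' but that is precisely what has to be proved, not a consequence of realizing the representation over $\F_p$. The correct reason (and the one the paper gives) is that $\det\omega_i|_{K^i}$ factors through $K^i\to\Sp(W_i)\to\Sp(W_i)^{\mathrm{ab}}$, and for odd $p$ the abelianization $\Sp(W_i)^{\mathrm{ab}}$ is a $p$-group (trivial unless $W_i\cong\F_3^2$, in which case it is $\Z/3\Z$). Without this input about symplectic groups, the $p$-power order claim for the Weil-representation contribution is unjustified.
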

\begin{rem}
In fact, the properties (iii) and (iv) of Proposition \ref{prop:choiceofdatum} are not necessary for this proposition.
We assume them for expository convenience in the proofs of (Ext1) and (Ext4) below.
\end{rem}
\begin{proof}
	By Remark \ref{rem:Ext2},
	it suffices to check that the representation $\bigotimes_{i=0}^{d}\kappa_{i}$ of $K^{d}=\mathbf{J}$ constructed from the Yu-datum $\underline{\Psi}$ satisfies the conditions (Ext1), (Ext3), and (Ext4) in the definition of 
	$\Lambda_{\mathrm{w}}$.
	
	Let us show that $\bigotimes_{i=0}^{d}\kappa_{i}$ satisfies (Ext1).
	As the first step in constructing $\kappa_{i}$, a character $\underline{\hat{\phi}}_i$ on $K_+^d$ is constructed from $\underline{\phi}_{i}$ in \cite[591 page]{MR1824988}.
	The proof of \cite[Proposition 8.2 (i)]{Mayeux:2017aa} (or \cite[Proposition 8.12 (i)]{Mayeux:2020aa}), together with
	\cite{MR1888474}, gives the following expression of $\underline{\hat{\phi}}_i$:
	\begin{align*}
	\underline{\hat{\phi}}_i&=\psi_{\underline{c}_{i}} \quad \text{on $U_{\mfA_{i+1}}^{h_i}U_{\mfA_{i+2}}^{h_{i+1}}\cdots U_{\mfA_{d}}^{h_{d-1}}$}, \\
	\underline{\hat{\phi}}_i&=\underline{\xi}_{i} \circ {\textstyle\det_{i}} \quad \text{on $U_{\mfA_{0}}^{h_{-1}}U_{\mfA_{1}}^{h_0}\cdots U_{\mfA_{i}}^{h_{i-1}}$,}
	\end{align*}
	where
	\begin{itemize}
		\item each $\underline{\xi}_{i}$ is as in the proof of Proposition \ref{prop:choiceofdatum},
		\item $\underline{c}_{i}$ is an element in
		$\mfp_E^{-t_i}\cap E_i$ such that 
		\[
		\underline{\xi}_{i}(1+x)=\psi_F(\Tr_{E_{i}/F}(\underline{c}_{i}x))
		\]
		for $x\in \mfp_E^{h_i}\cap E_i$, and
		\item $\psi_{\underline{c}_{i}}$ is defined as $\psi_{c_i}$ 
		in the definition of $\theta$ with $\underline{c}_{i}$ in place of $c_i$.
	\end{itemize}
	By (iii) of Proposition \ref{prop:choiceofdatum} we have $\underline{\xi}_{i}|_{U_{E_i}^{1}}=\xi_{i}|_{U_{E_i}^{1}}$
	and thus $\psi_{\underline{c}_{i}}=\psi_{c_i}$.
	Therefore we see that the character $\prod_{i=0}^{d}\underline{\hat{\phi}}_i$ of $K_{+}^{d}$ agrees with the character $\theta$ of $H^{1}$ (recall that $K_{+}^{d}=H^{1}$).
	As discussed earlier,
	$\eta$ is the unique irreducible representation of $J^1$ containing
	the character $\prod_{i=0}^{d}\underline{\hat{\phi}}_i=\theta$ on $K_{+}^d=H^1$.
	Since $\bigotimes_{i=0}^{d}\kappa_{i}|_{K_{+}^{d}}$ does contain $\prod_{i=0}^{d}\underline{\hat{\phi}}_i$ by \cite[Lemma 3.27]{MR2431732},
	it is enough to show $\dim \bigotimes_{i=0}^{d}\kappa_{i}=\dim\eta$.
	This is done in (c) in the proof of \cite[Proposition 8.3]{Mayeux:2017aa} (or also \cite[Proposition 9.2]{Mayeux:2020aa}).
	
	(Ext3) is clear as we can easily check that if $Z(\G)$ denotes the center of $\G$, then $\bigotimes_{i=0}^d\kappa_{i}|_{Z(\G)(F)}$ is $\prod_{i=0}^{d}\underline{\phi}_{i}|_{Z(\G)(F)}$-isotypic 
	(cf.\ the proof of \cite[Fact 3.7.11]{MR4013740}),
	and we have $\prod_{i=0}^{d}\underline{\phi}_{i}|_{Z(\G)(F)}=\xi_{\mathrm{w}}|_{Z(\G)(F)}$ by (ii) of Proposition \ref{prop:choiceofdatum}.
	
	To prove that $\det \kappa_{i}$ has $p$-power order for $0\leq i\leq d$ 
	and thus $\bigotimes_{i=0}^d\kappa_{i}$ satisfies (Ext4),
	let us review the construction of $\kappa_{i}$ explained in \cite[Section 3.4]{MR2431732} in some details.
	
	In \cite[52-53 page]{MR2431732} certain open subgroups $K^{i}$ (resp.\ $J^{i}$) of $\G^{i}(F)$ are defined for $0\leq i\leq d$ (resp.\ $1\leq i\leq d$). 
	(Here we stick to the notation in \cite{MR2431732}. 
	The symbol $J^1$ also appeared in the construction of $\pi_{(E,\xi)}^{\BH}$,
	but what it denotes is not the same.)
	Among other properties these groups satisfy
	\[
	K^{i+1}=K^{i}J^{i+1}.
	\]
	As explained in \cite[67 page]{MR2431732},
	for $0\leq i\leq d-1$, the representation $\kappa_{i}$ of $K=K^{d}=K^{i+1}J^{i+2}\cdots J^{d}$ is defined 
	by first constructing a representation $\underline{\phi}'_{i}$ of $K^{i+1}$ from $\underline{\phi}_{i}$ and then extending it trivially on $J^{i+2}\cdots J^{d}$.
	Also, $\kappa_{d}$ is defined simply as the restriction of $\underline{\phi}_{d}$ to $K$, which has $p$-power order by (iv) of Proposition \ref{prop:choiceofdatum}.
	Thus we are reduced to showing that $\det \underline{\phi}'_{i}$ has finite $p$-power order for $0\leq i\leq d-1$.
	
	The definition of $\underline{\phi}'_{i}$ involves the following diagram:
	\[
	\xymatrix{
		K^{i}&K^{i}\ltimes J^{i+1}\ar@{->>}[l] \ar@{->>}[r] \ar@{->>}[d] & K^{i}\ltimes \mathcal{H}_i \\
		&K^{i+1},&
	}
	\]
	where $\mathcal{H}_{i}$ is a finite quotient of $J^{i+1}$,
	the two semi-direct products are induced from the conjugation action of $K^{i}$ on $J^{i+1}$
	and all the three maps are canonical surjective homomorphisms.
	The representation $\underline{\phi}'_{i}$ is defined by first constructing a representation $\omega_{i}$ of $K^{i}\ltimes \mathcal{H}_{i}$ from $\underline{\phi}_{i}$,
	then taking the tensor product of the inflations of $\underline{\phi}_{i}|_{K^i}$ and $\omega_{i}$ to $K^{i}\ltimes J^{i+1}$ and finally showing that this tensor product representation descends to a representation of $K^{i+1}$.
	As $\underline{\phi}_{i}$ has finite $p$-power order by (iv) of Proposition \ref{prop:choiceofdatum},
	it suffices to prove that $\det \omega_{i}$ has finite $p$-power order.
	
	The definition of $\omega_{i}$ is divided into two cases according to whether $\mathcal{H}_{i}$ is isomorphic to $\F_{p}$ or a non-trivial Heisenberg $p$-group.
	In the first case, 
	the semi-direct product $K^{i}\ltimes \mathcal{H}_{i}$ is in fact a direct product and $\omega_{i}$ is the pull-back of a nontrivial character on $\mathcal{H}_{i}$. Then $\det \omega_{i}=\omega_{i}$ indeed has finite $p$-power order.
	In the second case, let $W_i$ be the $\F_p$-vector space 
	obtained by dividing $\mathcal{H}_{i}$ by its center.
	The character $\underline{\phi}_{i}$ gives rise to a non-degenerate symplectic form on $W_i$ and the action of $K^{i}$ induces a symplectic action on $W_i$.
	Then $\omega_{i}$ is defined by means of the Heisenberg--Weil construction (see \cite[Section 2.3]{MR2431732} and \cite[Section 10]{MR2508725}). 
	As $\mathcal{H}_{i}$ is a $p$-group, it is enough to show that $\det \omega_{i}|_{K^{i}}$ has finite $p$-power order.
	Letting $\Sp(W_i)$ denote the symplectic group,
	we see that by the construction of the Weil representation
	the restriction $\det \omega_{i}|_{K^{i}}$ factors through $K^{i}\to \Sp(W_{i})\to \Sp(W_{i})^{\mathrm{ab}}$.
	If we write $V_{\omega_{i}}$ for the representation space of $\omega_{i}$,
	then the situation is summarized in the following diagram:
	
	\[
	\xymatrix{
		K^{i}\ltimes \mathcal{H}_i\ar[r]&\Sp(W_i)\ltimes \mathcal{H}_i \ar[r] &\GL(V_{\omega_{i}}) \ar[r]^(0.6){\det} &\C^{\times} \\
		K^{i} \ar@{^{(}->}[u] \ar[r] &\Sp(W_i) \ar@{^{(}->}[u] \ar[r] & \Sp(W_i)^{\textrm{ab}} \ar[ur]
	}
	\]
	Now we complete the proof by noting that $p$ is assumed to be odd and thus
	\[
	\Sp(W_i)^{\text{ab}}=\begin{cases}
	\Z/3\Z &\text{if $W_{i}\cong \F_{3}^2$} \\
	0 &\text{otherwise}
	\end{cases}
	\]
	is a $p$-group 
	(this is well-known as explained, for example, in \cite[Section 2.4]{MR2431732} and \cite[(1) of Proof of Theorem 2.4]{MR460477}).
\end{proof}


\end{document}